\documentclass[notitlepage,reqno,11pt]{amsart}
\usepackage{latexsym,amssymb, epsfig, amsmath,amsfonts, subfigure,amsthm}

\usepackage{rotating}
\usepackage[toc,page]{appendix}
\usepackage{color}
\usepackage{multirow}
\usepackage{relsize}
\usepackage{microtype}
\usepackage[foot]{amsaddr}

\usepackage[colorlinks, allcolors=blue]{hyperref}

\usepackage[margin=1in]{geometry}



\numberwithin{equation}{section}
 \newtheorem{assumption}{Assumption}[section]
\newtheorem{lemma}{Lemma}[section]
\newtheorem{theorem}{Theorem}[section]

\newtheorem{coro}{Corollary}[section]
\newtheorem{prop}{Proposition}[section]
\newtheorem{remark}{Remark}[section]

\newlength{\defbaselineskip}
\setlength{\defbaselineskip}{\baselineskip}
\newcommand{\setlinespacing}[1]%
           {\setlength{\baselineskip}{#1 \defbaselineskip}}

\newcommand{\RR}{{\mathbb R}}

\newcommand{\NN}{{\mathbb N}}

\def\E{\mathbb{E}}
\def\P{\mathbb{P}}
\def\R{\mathbb{R}}

\newcommand{\sF}{{\mathcal{F}}}
\newcommand{\sI}{{\mathcal{I}}}

\newcommand{\beql}[1]{\begin{equation}\label{#1}}
\newcommand{\eeq}{\end{equation}}

\newcommand{\beqal}[1]{\begin{eqnarray}\label{#1}}
\newcommand{\eeqa}{\end{eqnarray}}
\newcommand{\beq}{\begin{displaymath}}
\newcommand{\eeqno}{\end{displaymath}}
\newcommand{\bali}[1]{\begin{align}\label{#1}}
\newcommand{\eali}{\begin{align}}
\newcommand{\balino}{\begin{align*}}
\newcommand{\ealino}{\begin{align*}}

\newcommand{\ep}{\epsilon}

\newcommand{\Var}{\text{\rm Var}}

\newcommand{\wt}{\widetilde}

\newcommand{\mfI}{\mathfrak{I}}
\newcommand{\mfi}{\mathfrak{i}}

\newcommand{\bone}{{\mathbf 1}}

\newcommand{\qandq}{\quad\mbox{and}\quad}

\newcommand{\qforq}{\quad\mbox{for}\quad}

\newcommand{\qasq}{\quad\mbox{as}\quad}
\newcommand{\qinq}{\quad\mbox{in}\quad}

\newcommand{\non}{\nonumber}
\newcommand{\RA}{\Rightarrow}

\newcommand{\baa}{\begin{eqnarray*}}
\newcommand{\eaa}{\end{eqnarray*}}

\newcommand{\ttl}{\Large Functional law of large numbers and PDEs for epidemic models \\[5pt] with infection-age dependent infectivity}

\begin{document}

\title[]{\ttl}

\author[Guodong \ Pang]{Guodong Pang$^*$}
\address{$^*$Department of Computational Applied Mathematics and Operations Research,
George R. Brown College of Engineering,
Rice University,
Houston, TX 77005}
\email{gdpang@rice.edu}

\author[{\'E}tienne \ Pardoux]{{\'E}tienne Pardoux$^\dag$}
\address{$^\dag$Aix--Marseille Univ, CNRS, I2M, Marseille, France}
\email{etienne.pardoux@univ-amu.fr}

%
%

\begin{abstract} 
We study epidemic models where the infectivity of each individual is a random function of the infection age (the elapsed time of infection). 
To describe the epidemic evolution dynamics, we use a stochastic process that tracks the number of individuals at each time that have been infected for less than or equal to a certain amount of time, together with the aggregate infectivity process. 
We establish the functional law of large numbers (FLLN) for the stochastic processes that describe the epidemic dynamics. The limits are described by a set of deterministic Volterra-type integral equations, which has a further characterization using PDEs under some regularity conditions.  
The solutions are characterized with boundary conditions that are given by a system of Volterra equations. 
We also characterize the equilibrium points for the PDEs in the SIS model with infection-age dependent infectivity. 
To establish the FLLNs, we employ a useful criterion for weak convergence for the two-parameter processes together with useful representations for the relevant processes via Poisson random measures. 
\end{abstract}

\keywords{Functional law of large numbers, deterministic Volterra integral equations, PDEs, non--Markovian epidemic models, infection-age dependent (varying) infectivity, Poisson random measure, SIR, SIS, equilibrium in the SIS model}

\maketitle

\allowdisplaybreaks

\section{Introduction}

Kermack and McKendrick pioneered the introduction of PDE models to describe the epidemic dynamics for models with infection-age dependent (variable) infectivity in 1932 \cite{KM32}.  The underlying assumption of their model is that the infectious periods have a general distribution with density which is modeled through an infection--age dependent recovery rate, the infectious individuals having an infection--age dependent  infectivity, and the recovered ones a recovery--age susceptibility. In the present paper, we do not consider possible loss of immunity. We defer to a work in preparation the study of variable susceptibility. In the present paper, we mainly consider the SIR model (although we can allow for an exposed period, as will be explained below) and the SIS model. This work is a continuation of 
our first work on non--Markov epidemic models \cite{PP-2020}, and our work on varying infectivity models
\cite{FPP2020b}, see also \cite{PP2020-FCLT-VI}. In those papers, we show that certain deterministic Volterra type integral equations are Functional Law of Large Numbers (FLLN) limits of adequate individual based stochastic models. An important feature of our stochastic models is that they are non--Markov (since the infectious duration need not have an exponential distribution), and as a result the limiting deterministic models are equations with memory. Note that as early as in 1927, Kermack and McKendrick introduced in their seminal paper \cite{KM27} a SIR model with both infection--age dependent infectivity and infection--age dependent recovery rate, the latter allowing the infectious period to have an arbitrary absolutely continuous distribution (the infection--age dependent recovery rate is the hazard rate function of the infectious period). One part of that paper is devoted to the simpler case of constant rates, and apparently most of the later literature on epidemic models has concentrated on this special case, which leads to simpler ODE models, the corresponding stochastic models being Markov models, at the price of the models being less realistic. For example, the recent studies in Covid-19 \cite{FPP2020a,FodorKatzKovacs} indicates that using the ODE models can lead to an underestimation of the basic reproduction number $R_0$. 

In this paper, we go back to the original model of  Kermack and McKendrick \cite{KM27}, with two new aspects. First, as in our previous publications, we want to obtain the deterministic model as a law of large numbers limit of stochastic models, and second, we distribute the various infected individuals at time $t$ according to their infection--age, and establish a PDE for the ``density of individuals'' being infected at time $t$, with infection--age $x$.

In our stochastic epidemic model, each individual is associated with a random infectivity, which varies as a function of the age of infection (elapsed time since infection). 
The random infectivity functions, effective during the infected period, are assumed to be i.i.d. for the various individuals, and will also generate the infectious period. The infectivity function is assumed to be  c\`adl\`ag  with a given number of discontinuities, and upper bounded by a deterministic constant. In particular, the law of the infectious period can be completely arbitrary. Our modeling approach allows the random infectivity functions to have an initial period of time during which they take zero values, corresponding to the exposed period. Thus our model generalizes both the classical SIR and SEIR models.  
 To describe the epidemic dynamics of the model, we use a (two-parameter or measure-valued) stochastic process that tracks at each time $t$ the number of individuals that have been infected for a duration less than or equal to a certain amount of time $x$, and an associated aggregate infectivity process which at each time $t$  sums up the infectivities of all individuals who are infected. 
From these processes, we can describe the cumulative infection process, the total number of infected individuals as well as the number of recovered ones at each time. 
We use similar processes to describe the epidemic dynamics for the SIS model with infection-age dependent infectivity. 

In the asymptotic regime of a large population (i.e., as the total population size $N$ tends to infinity), we establish the FLLN for the epidemic dynamics. The limits are characterized by  a set of deterministic Volterra-type integral equations (Theorem \ref{thm-FLLN}). 
Under certain regularity conditions,  the density function of the two-parameter (calendar time and infection age) limit process  can be described by a one-dimensional PDE 
 (Proposition \ref{prop-PDE}  in the case where the distribution of the infectious period is absolutely continuous). 
 Its solution is characterized with a boundary condition satisfying a one-dimensional Volterra-type integral equation. The aggregate infectivity limit process can be described by an integral of the average infectivity function with respect to the limiting two-parameter infectious process (Corollary \ref{coroFormula}, see also Remark \ref{rem-lambda-indep}). 
For the classical SIR model, we recover the well-known linear PDE first proposed by  Kermack and McKendrick \cite{KM32}. 
We further derive the PDE model when the distribution of the infectious period 
need not be absolutely continuous (Proposition \ref{prop-PDE-measure} and see also Corollary \ref{prop-PDE-det} where  the infectious periods are deterministic). These PDE models are new to the literature of epidemiology. 
For the SIS model, we also describe the limiting epidemic dynamics and the PDE representations, and derive the equilibrium quantities associated with the PDE and total count limit (assuming convergence to the equilibria).

\subsection{Literature review}

Non--Markov stochastic epidemic models lead (via the FLLN) to deterministic models, which are either low dimensional evolution equation with memory (i.e., Volterra type integral equations), or  else coupled ODE/PDE models, where the two variables are the time and the age of infection (time since infection). The first paper of Kermack and McKendrick \cite{KM27} adopts the first point of view, and the two next \cite{KM32,KM33} the second one. In our recent previous work on this topic \cite{PP-2020,FPP2020b}, we have adopted the first description. The goal of the present paper is to show that in the limit of a large population, our stochastic individual based model with age of infection dependent infectivity and recovery rate converges as well to a limiting system of PDE/ODEs. 

While the general model from \cite{KM27} was largely neglected until rather recently, most of the literature concentrating on the particular case of constant rates, there has been since the 1970s some papers considering infection-age dependent epidemic models, see in particular \cite{hoppensteadt1974age}. More recently, several papers have introduced coupled PDE/ODE models for studying age of infection dependent both infectivity and recovery rate, see in particular \cite{thieme1993may,inaba2004mathematical,ZhangPeng2007,magal2013two,YChen2014} and Chapter 13 in \cite{martcheva2015introduction}. 
In  \cite{clemenccon2008stochastic}, the authors consider a stochastic epidemic model with contract-tracing, tracking the infection duration since detection for each individual, and use a measure-valued Markov process to describe the epidemic dynamics. They prove a FLLN with a large population and establish a PDE limit, and also prove a FCLT with a SPDE limit process.
Since the beginning of the Covid--19 pandemic, a huge number of papers have been produced, with various models of the propagation of this disease. Most of them use ODE models, but a few, notably \cite{Kaplan2020,Gaubert,foutel2020individual,duchamps2021general} consider age of infection dependent infectivity, and possibly recovery rate. 
The last two 
derive the ODE/PDE model as a law of large numbers limit of stochastic individual based models. 
The article \cite{foutel2020individual} considers a branching process approximation of the early phase of an epidemic, and the way they model the dependence of the rate of infection with respect to the age of infection is less general than in our model. Recently, the authors in \cite{duchamps2021general} study contact tracing in an individual-based epidemic model via an ``infection graph" of the population, and prove the local convergence of the random graph to a Poisson marked tree and a  Kermack and McKendrick type of PDE limit for the dynamics by tracking the infection age.

Note also that one way that many authors have chosen in order to improve the realism of ODE models is by increasing the number of compartments. For instance, dividing the infectious compartment into subcompartments, each one corresponding to a different infection rate, is a way to introduce a (piecewise constant) infection age dependent infectivity. In a way, this means approaching a non-Markov process of a given dimension by a higher dimensional Markov process, or approaching a system differential equations with memory by a higher dimensional system of ODEs. 
In the present paper, we show that the system of integral equations with memory introduced in our earlier work 
\cite{PP-2020} can be replaced by an ODE/PDE system, i.e., an infinite dimensional differential equation. At the level of the stochastic finite population model, this means replacing a non-Markov finite dimensional Markov process by a high dimensional 
process (whose dimension is bounded by the total population size $N$, which tends to infinity in our asymptotic).
See Remarks \ref{infinite-dim} and \ref{infinite-dim-m} below.

We also like to mention the relevant work in queueing systems where the elapsed service times are tracked using two-parameter or measure-valued processes. The most relevant to us are the infinite-server (IS) queueing models studied in \cite{pang2010two,pang2017two,aras2017heavy}, where FLLN and FCLT are established for two-parameter processes to tracking elapsed and residual service times. However, the proof techniques we employ in this paper are very different from those papers. Here we exploit the representations with Poisson random measures and use a new weak convergence criterion (Theorem \ref{thm-DD-conv0}). In addition, despite similarities with the IS queueing models, the stochastic epidemic models
have an arrival (infection) process that depend on the state of the system. As a consequence, the limits in the FLLNs result in PDEs while the IS queueing models do not.

\subsection{Organization of the paper}
The paper is organized as follows. 
In Section \ref{sec-SIR}, we describe the stochastic epidemic model with infection-age dependent infectivity, and state the FLLN. 
In Section \ref{sec-PDE}, we present the PDE models from the FLLN limits, and also characterize the solution properties of the PDEs. 
The limits and PDE for the SIS model are presented in Section \ref{sec-SIS}, which also  considers the equilibrium behavior. 
In Sections \ref{sec-proof-SIR}, we prove the FLLN.  The Appendix gives the proof of the convergence criterion in Theorem \ref{thm-DD-conv0}. 

\subsection{Notation}
 All random variables and processes are defined on a common complete probability space $(\Omega, \sF, \P)$. The notation $\RA$ means convergence in distribution. We use $\bone_{\{\cdot\}}$ for the indicator function, and occasionally use $\bone\{\cdot\}$ for better readability. 
Throughout the paper, $\NN$ denotes the set of natural numbers, and $\RR^k (\RR^k_+)$ denotes the space of $k$-dimensional vectors
with  real (nonnegative) coordinates, with $\RR (\RR_+)$ for $k=1$.  For $x,y \in\RR$, we denote $x\wedge y = \min\{x,y\}$ and $x\vee y = \max\{x,y\}$. 
Let $D=D(\RR_+;\RR)$ denote the space of $\RR$--valued c{\`a}dl{\`a}g functions defined on $\RR_+$. 
Throughout the paper, convergence in $D$ means convergence in the  Skorohod $J_1$ topology, see Chapter 3 of \cite{billingsley1999convergence}. 
 Also, $D^k$ stands for the $k$-fold product equipped with the product topology. 
 Let $C$ be the subset of $D$ consisting of continuous functions.   Let $C^1$ consist of all differentiable functions whose derivative is continuous. Let $D_\uparrow$ denote the set of increasing functions in $D$. 
 Let $D_D= D(\RR_+; D(\RR_+;\RR))$ be the $D$-valued $D$ space, and the convergence in the space $D_D$ means that both $D$ spaces are endowed with the Skorohod $J_1$ topology.  The space $C_C$ is equivalent to $C(\RR_+^2; \RR_+)$. Let $C_\uparrow(\R_+^2;\R_+)$ denote the space of continuous functions from $\R_+^2$ into $\R_+$, which are increasing as a function of their second variable. 
For any increasing c{\`a}dl{\`a}g function $F(\cdot): \R_+\to \R_+$, abusing notation, we write $F(dx)$ by treating $F(\cdot)$ as the positive (finite) measure on 
$\R_+$ whose distribution function is $F$. 
For any $\RR$--valued c{\`a}dl{\`a}g function $\phi(\cdot)$ on $\R_+$, the integral $\int_{a}^b \phi(x)F(dx)$ represents $\int_{(a,b]} \phi(x) F(dx)$ for $a<b$.

\section{Model and FLLN} \label{sec-SIR}

\subsection{Model description}
We consider an epidemic model
  in which the infectivity rate depends on the age of infection (that is, how long the individuals have been infected).  
  Specifically,
 each  individual $i$ is associated with an infectivity process $\lambda_i(\cdot)$, and we assume that these random functions  are i.i.d.. Let $\eta_i= \inf\{t>0: \lambda_i(r) = 0, \, \forall r \ge t\}$ be the infected period corresponding to the individual that gets infected at time $\tau^N_i$. The $\eta_i$'s are i.i.d., with a cumulative distribution function (c.d.f.) $F$.  Let $F^c=1-F$.

Individuals are grouped into susceptible, infected and recovered ones. 
Let the population size be $N$ and 
$S^N(t)$, $I^N(t)$ and $R^N(t)$ denote the numbers of the susceptible, infected and recovered individuals at time $t$.   We have the balance equation:
$
N= S^N(t) + I^N(t) + R^N(t)$, $ t \ge 0. 
$
Assume that $S^N(0)>0$, $I^N(0)>0$ and $R^N(0)=0$. Let $\mfI^N(t,x)$ be the number of infected individuals at time $t$ that have been infected for a duration less than or equal to $x$. 
Note that for each $t$, $\mfI^N(t,x)$ is nondecreasing in $x$, which is the distribution of $I^N(t)$ over the infection-ages.  
Let $A^N(t)$ be the cumulative number of newly infected individuals in $(0,t]$, with the infection times $\{\tau^N_i: i \in \NN\}$.

Let $\{\tau_{j,0}^N, j =1,\dots, I^N(0)\}$ be the times at which the initially infected individuals at time 0 became infected. 
Then $\tilde{\tau}_{j,0}^N = -\tau_{j,0}^N$, $ j =1,\dots, I^N(0)$, represent the amount of time that an initially infected individual has been infected by time $0$, that is, the age of infection at time $0$. 
WLOG, assume that $0 > \tau_{1,0}^N> \tau_{2,0}^N> \cdots >\tau_{I^N(0),0}^N$ (or equivalently 
$0 < \tilde{\tau}_{1,0}^N< \tilde{\tau}_{2,0}^N< \cdots < \tilde{\tau}_{I^N(0),0}^N$). Set $\tilde{\tau}_{0,0}^N=0$. 
We define $\mathfrak{I}^N(0,x) = \max\{j \ge 0: \tilde{\tau}_{j,0}^N \le x\}$, the number of initially infected individuals that have been infected for a duration less than or equal to $x$ at time $0$. Assume that there exists $0 \le \bar{x}< \infty$  such that $I^N(0) = \mfI^{N}(0, \bar{x})$ a.s. 

Each initially infected individual $j=1,\dots, I^N(0)$, is associated with an infectivity process $\lambda_j^0(\cdot)$, and we assume that they are also i.i.d., with the same law as $\lambda_i(\cdot)$. This is reasonable since it is for the same disease, and the infectivity for the initially and newly infected individuals with the same infection age should have the same law.  The infectivity processes take effect at the epochs of infection. 
For each $j$, let  $\eta^0_j =\inf\{t>0: \lambda_j^0(\tilde{\tau}_{j,0}^N+r)=0,\,  \forall r \ge t\}$ be the remaining infectious period, which depends on the elapsed infection time $\tilde{\tau}_{j,0}^N$, but is independent of the elapsed infection times of other initially infected individuals.  
In particular, the conditional distribution of $\eta^0_j$ given that $\tilde{\tau}_{j,0}^N=s>0$ is given by
\begin{align} \label{enq-eta0-age}
\P(\eta^0_j > t | \tilde{\tau}_{j,0}^N=s) = \frac{F^c(t+s)}{F^c(s)}, \qforq t, s >0. 
\end{align}
Note that the $\eta^0_j$'s are independent but not identically distributed.

 For an initially infected individual $j=1,\dots,I^N(0)$, 
 the infection age is given by $\tilde{\tau}^N_{j,0}+ t$ for $0 \le t \le \eta^0_j$, during the remaining infectious period.
 For a newly infected individual $i$, the infection age is given by 
 $t- \tau^N_i$, for $ \tau^N_i \le t \le \tau^N_i +\eta_i$ during the infectious period. 
 Note that $\lambda_i(\cdot)$ and $\lambda^0_j(\cdot)$ are equal to zero on $\RR_{-}$. 
 
The aggregate infectivity process at time $t$ is given by 
 \begin{align} \label{eqn-cI-n}
\mathcal{I}^N(t) = \sum_{j=1}^{I^N(0)} \lambda_j^0 (\tilde{\tau}^N_{j,0}+t)   
+ \sum_{i=1}^{A^N(t)} \lambda_i(t-\tau^N_i) , \quad t \ge 0.
 \end{align}
 (Note that the notation $\mfI^N$ was used for the infectivity process in \cite{FPP2020b,PP2020-FCLT-VI}.)
The instantaneous infection rate at time $t$ can be written as 
  \begin{align}  \label{eqn-Upsilon}
 \Upsilon^N(t) & =  \frac{S^N(t)}{N} \mathcal{I}^N(t), \quad t \ge 0. 
 \end{align}
 The counting process of newly infected individuals $A^N(t)$ can be written as 
\begin{align} \label{eqn-An-PRM}
A^N(t) = \int_0^t \int_0^\infty \bone_{u \le \Upsilon^N(s^-) } Q(ds,du)\,, 
\end{align}
where $Q$ is a standard Poisson random measure on $\RR^2_+$  (see, e.g., \cite[Chapter VI]{Cinlar2011probability}).

Among the initially infected individuals, the number of individuals who have been infected for a duration less than or equal to $x$ at time $t$ is equal to  
\begin{align} \label{eqn-In0-rep}
\mfI^{N}_0(t,x) = \sum_{j=1}^{I^N(0)} \bone_{\eta_j^0 > t} \bone_{\tilde{\tau}_{j,0}^N  \le (x-t)^+ } = \sum_{j=1}^{\mfI^N(0, (x-t)^+)} \bone_{\eta_j^0 > t}\,, \quad t, x\ge 0,
\end{align}
Recall the age limit of the initially infected individuals $\bar{x}$ at time zero. 
Thus, the number of the initially infected individuals that remain infected at time $t$ can be written as
\begin{equation} \label{eqn-In0-total}
I^N_0(t) \;=\;\mfI^N_0(t,\bar{x}+t)\,, \quad t \ge 0\,. 
\end{equation}

Among the newly infected individuals, the number of individuals who have been infected for a duration  less than or equal to $x$ at time $t$ is equal to
\begin{align} \label{eqn-In1-rep}
\mfI^{N}_1(t,x) &=  \sum_{i=1}^{A^N(t)} \bone_{(t-x)^+<\tau^N_i \le t} \bone_{\tau^N_i + \eta_i >t} = \sum_{i=1}^{A^N(t)}  \bone_{\tau^N_i + \eta_i >t}- \sum_{i=1}^{A^N((t-x)^+)}  \bone_{\tau^N_i + \eta_i >t}\non \\
&=  \sum_{i=A^N((t-x)^+)+1}^{A^N(t)}  \bone_{\tau^N_i + \eta_i >t} 
\end{align}

Thus, the number of newly infected individuals that remain infected at time $t$ can be written as
\begin{equation}\label{eqn-In1-total}
I^N_1(t) = \mfI^N_1(t,t). 
\end{equation}

We also have the total number of individuals infected at time $t$ that have been infected for a duration which is less than or equal to $x$:
$$
\mfI^N(t,x) = \mfI^N_0(t,x) + \mfI^N_1(t,x), \quad t \ge 0, \, x \ge 0.
$$
Note that for each $t$,  the support of the measure $\mfI^N_0(t,dx)$ is included in $[0, t+\bar{x}]$ and 
the support of the measure $\mfI^N_1(t, dx)$ is included in $[0,t]$.  Thus
$$
I^N(t) = \mfI^N_0(t,t+\bar{x}) + \mfI^N_1(t,t) =\mfI^N(t,\infty), \quad t \ge 0. 
$$
Here we occasionally use $\infty$ in the second component for convenience with the understanding that  $\mfI^N_0(t,x) = \mfI^N_0(t,t+\bar{x})$ for $x>t+\bar{x}$ and  $\mfI^N_1(t,x) =\mfI^N_1(t,t) $ for $x>t$.

We also have for $t\ge 0$,
\begin{align}
S^N(t) &= S^N(0) - A^N(t),  \label{eqn-Sn-rep}\\
R^N(t) &= \sum_{j=1}^{I^N(0)} \bone_{\eta_j^0 \le t}  + \sum_{i=1}^{A^N(t)} \bone_{\tau^N_i + \eta_i \le t}\,.
  \label{eqn-Rn-rep}
\end{align}

We remark that the sample paths of $\mfI^N(t,x)$ belong to the space $D_D$, denoting $ D(\RR_+;D(\RR_+;\RR))$, the $D$-valued $D$ space,  but not in the space $D(\RR_+^2; \RR)$. We prove the weak convergence in the space $D_D$ where both $D$ spaces are endowed with the Skorohod $J_1$ topology. Note that the space $D(\RR_+^2; \RR)$ is a strict subspace of $D_D$, although they are equivalent in the continuous cases, that is, $C(\RR_+^2; \RR) = C_C$. 
See more discussions on these spaces in \cite{pang2010two,pang2017two, balan2019weak, bickel1971convergence}.

\begin{remark}{\bf The SEIR model}.
Suppose that $\lambda_i(t)=0$ for $t\in[0,\xi_i)$, where $\xi_i<\eta_i$, and denote $I$ as the compartment of infected (not necessarily infectious) individuals. An individual who gets infected at time $\tau^N_i$ is first exposed during the time interval $[\tau^N_i,\tau^N_i+\zeta_i)$, and then infectious during the time interval
$(\tau^N_i+\zeta_i,\tau^N_i+\eta_i)$. One may state that the individual is infected during the time interval $[\tau^N_i,\tau^N_i+\eta_i)$.
At time $\tau^N_i+\eta_i$, he recovers. All what follows covers perfectly this situation. In other words, our model accomodates perfectly an exposed period before the infectious period, which is important for many infectious diseases, including the Covid--19. However, we distinguish only three compartments, $S$ for susceptible, $I$ for infected (either exposed or infectious), $R$ for recovered.
\end{remark}

 In the sequel, the time interval $[\tau^N_i,\tau^N_i+\eta_i)$ will be called the infectious period, although it might rather be the period during which the individual is infected (either exposed or infectious). 

\subsection{FLLN} 

Define the LLN-scaled processes  $\bar{X}^N= N^{-1} X^N$ for any processes $X^N$. 
We make the following assumptions on the initial quantities. 

\begin{assumption} \label{AS-FLLN-Initial}
There exists a deterministic continuous nondecreasing function $\bar{\mfI}(0,x)$ for $x\ge 0$ with $\bar{\mfI}(0,0)=0$ such that 
$\bar{\mfI}^{N}(0,\cdot) \to \bar{\mfI}(0,\cdot)$ in $D$  in probability as $N\to\infty$. 
 Let $\bar{I}(0) = \bar{\mfI}(0, \bar{x})$.  Then $(\bar{I}^N(0), \bar{S}^N(0)) \to (\bar{I}(0), \bar{S}(0)) \in (0,1)^2$  in probability as $N\to \infty$ where  $ \bar{S}(0) =1 - \bar{I}(0)\in (0,1)$. 
\end{assumption}

\begin{remark} 
Recall that $\bar{\mfI}^{N}(0,\cdot)$ describes the distribution of the initially infected individuals over the ages of infection. The assumption means that there is a corresponding limiting continuous distribution as the population size goes to infinity.

Suppose now that the r.v.'s $\{\tau_{j,0}^N\}_{1\le j\le N}$ are not ordered, but rather i.i.d., with a common distribution function $G$ which we assume to be  continuous. It then follows from the law of large numbers that Assumption \ref{AS-FLLN-Initial} holds in this case.
\end{remark} 

We make the following assumption on the random function $\lambda$.

\begin{assumption} \label{AS-lambda}
  Let $\lambda(\cdot)$ be a process having the same law of $\{\lambda_j^0(\cdot)\}_j$ and $\{\lambda_i(\cdot)\}_i$. 
Assume that there exists a constant $\lambda^*$ such that for each $0<T<\infty$, 
$\sup_{t\in [0,T]} \lambda(t) \le \lambda^*$  almost surely.
Assume that there exist an integer $k$, a random sequence  $0=\zeta^0 \le \zeta^1 \le \cdots \le \zeta^k $ and associated random functions $\lambda^\ell \in C(\RR_+;[0,\lambda^\ast])$, $1\le\ell \le k$, such that 
\begin{align} \label{eqn-lambda-assump}
\lambda(t) = \sum_{\ell=1}^k \lambda^\ell(t) \bone_{[\zeta^{\ell-1},\zeta^\ell)}(t).
\end{align}
In addition, 
we assume that there exists a deterministic nondecreasing function $\varphi \in C(\RR_+;\RR_+)$ with $\varphi(0)=0$ such that $|\lambda^\ell(t) - \lambda^\ell(s)| \le \varphi(t-s)$ almost surely for all $t,s \ge 0$ and for all $\ell\ge 1$. 
Let $\bar{\lambda}(t) = \E[\lambda_i(t)] =\E[\lambda^0_j(t)]$
and $v(t) =\Var(\lambda(t)) = \E\big[\big(\lambda(t) - \bar\lambda(t)\big)^2\big]$  for $t\ge 0$.  
\end{assumption}

\begin{remark} \label{rem-R0}
Recall that the basic reproduction number $R_0$ is the mean number of susceptible individuals whom an infectious individual infects in a large population otherwise fully susceptible. In the present model, clearly
\[ R_0=\int_0^\infty \bar\lambda(t)dt\,.\]
 Suppose that $\lambda_i(t)=\tilde{\lambda}(t)\bone_{t<\eta_i}$, where $\tilde{\lambda}(t)$ is a deterministic function. Then
 \[ R_0=\int_0^\infty \tilde{\lambda}(t)F^c(t)dt\,.\]
In the standard SIR model with $\tilde{\lambda}(t) \equiv \lambda$ and $\E[\eta] = \int_0^\infty F^c(t)dt$, the formula above reduces to the well known $R_0 = \lambda \E[\eta]$. See, e.g., \cite{britton2018stochastic}. 
 We obtain the same formula if the deterministic function $\tilde{\lambda}(t)$ is replaced by a process $\lambda_i(t)$  independent of $\eta_i$, with mean $\tilde{\lambda}(t)$.
 More precisely, in that case the sequence $({\lambda}_i(t),\eta_i)_{i\ge1}$ is assumed to be i.i.d., and for each $i$, ${\lambda}_i$ and $\eta_i$ are independent.  
\end{remark}

 The proof of the following Theorem, which is the main result of this section, will be given in section \ref{sec-proof-SIR}. 
 For a function $u(t,x) \in D_{D_\uparrow}$, we use the equivalent notations $d_x u(t,x)$ and $u_x(t,x)$ for the partial derivative w.r.t. $x$, while $u(t,dx)$ denotes the measure whose distribution function is
 $x\mapsto u(t,x)$, which coincides with $u_x(t,x)dx$ if that last map is differentiable. In particular, $u_x(t,0)$ indicates the partial derivative evaluated at $x=0$.

\begin{theorem} \label{thm-FLLN}
Under Assumptions \ref{AS-FLLN-Initial} and \ref{AS-lambda}, as $N\to \infty$,
\begin{align}
\big(\bar{S}^N, \overline{\sI}^N, \bar{\mfI}^{N},   \bar{R}^N\big) \to\big(\bar{S},\overline{\sI}, \bar{\mfI}, \bar{R}\big) \ \text{in probability, locally uniformly in $t$ and $x$}, 
\end{align}
where the limits are the unique continuous solution to the following set of integral equations,  for $t, x\ge 0$, 
\begin{align}
\bar{S}(t) &= 1- \bar{I}(0) -  \int_0^t\bar{\Upsilon}(s) ds, \label{eqn-barS}\\
 \overline{\sI}(t) 
  &= \int_0^{\bar{x}} \bar\lambda(y+t)  \bar{\mfI}(0, dy)  + \int_0^t  \bar\lambda(t-s)  \bar\Upsilon(s) ds\,,
  \label{eqn-overline-cal-I-2}\\
\bar{\mfI}(t,x) &=   \int_0^{(x-t)^+} \frac{F^c(t+y)}{F^c(y)}  \bar{\mfI}(0, dy)   +  \int_{(t-x)^+}^t F^c(t-s) \bar{\Upsilon}(s) ds, \label{eqn-barI}\\
\bar{R}(t) &=\int_0^{\bar{x}} \left(1- \frac{F^c(t+y)}{F^c(y)} \right)  \bar{\mfI}(0, dy) + \int_0^t F(t-s) \bar{\Upsilon}(s) ds,  \label{eqn-barR}
\end{align}
with 
\begin{equation} \label{eqn-bar-Upsilon}
\bar{\Upsilon}(t) =\bar{S}(t)  \overline{\sI}(t) =   \bar\mfI_x(t,0)\,. 
\end{equation}
 The function $\bar{\mfI}(t,x)$ is nondecreasing in $x$ for each $t$.
 As a consequence, $\bar{I}^N\to \bar{I}$ in $D$ in probability as $N\to\infty$ where 
 \begin{align}\label{eqn-barIt}
 \bar{I}(t)= \bar{\mfI}(t, t+\bar{x}) =  \int_0^{\bar{x}} \frac{F^c(t+y)}{F^c(y)}  \bar{\mfI}(0, d y)   +  \int_{0}^t F^c(t-s) \bar{\Upsilon}(s) ds, \quad t \ge 0. 
 \end{align}
 \end{theorem}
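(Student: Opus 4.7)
The plan is to exploit the Poisson random measure representation \eqref{eqn-An-PRM} for $A^N$ together with the a priori bound $\bar{\Upsilon}^N(t)\le \lambda^\ast$ coming from Assumption \ref{AS-lambda} (since $\bar{S}^N(t)\le 1$ and each $\lambda_i\le\lambda^\ast$). This immediately gives a uniform Lipschitz bound of rate $\lambda^\ast$ on the compensator $t\mapsto\int_0^t\bar{\Upsilon}^N(s)ds$, while the associated martingale $\bar M^N(t):=\bar A^N(t)-\int_0^t\bar{\Upsilon}^N(s)ds$ has predictable quadratic variation bounded by $\lambda^\ast t/N$, so by Doob's inequality $\bar M^N\to 0$ u.o.c. in probability. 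Thus tightness of $\bar S^N,\bar A^N,\bar R^N$ reduces to tightness of their compensators, which follows from the Lipschitz bound.

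For the two-parameter process I would decompose $\bar\mfI^N=\bar\mfI^N_0+\bar\mfI^N_1$ using \eqref{eqn-In0-rep}--\eqref{eqn-In1-rep} and analyze each separately. For the initial part, conditional on the observed ages, $\eta^0_j$ has the conditional law \eqref{enq-eta0-age}; by a conditional strong law of large numbers applied to the triangular array $\{\bone_{\eta^0_j>t}\}_{j\le\mfI^N(0,(x-t)^+)}$ and Assumption \ref{AS-FLLN-Initial},
\[
\bar\mfI^N_0(t,x)\longrightarrow\int_0^{(x-t)^+}\frac{F^c(t+y)}{F^c(y)}\,\bar\mfI(0,dy).
\]
For $\bar\mfI^N_1(t,x)$, I would first replace $\bone_{\tau^N_i+\eta_i>t}$ by its conditional mean $F^c(t-\tau^N_i)$ using the fact that the $\eta_i$'s are i.i.d.\ and independent of $(Q,\lambda_i)$; the error is a sum over $i\le A^N(t)$ of centered bounded i.i.d.\ random variables and vanishes in $L^2$ after $1/N$ scaling. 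The remaining deterministic-in-$\eta$ expression $\frac{1}{N}\sum_{i=A^N((t-x)^+)+1}^{A^N(t)}F^c(t-\tau^N_i)$ is then rewritten, via the PRM representation and the compensator substitution, as $\int_{(t-x)^+}^t F^c(t-s)\bar\Upsilon^N(s)ds$ plus a martingale remainder that again vanishes. The same strategy applied to $\overline{\sI}^N$, using the bound $\lambda\le\lambda^\ast$ and the piecewise-continuous structure \eqref{eqn-lambda-assump} with modulus $\varphi$ to justify replacing $\lambda^0_j$ and $\lambda_i$ by their means $\bar\lambda$, yields \eqref{eqn-overline-cal-I-2}.

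The two-parameter weak convergence in $D_D$ is the main technical obstacle: $\bar\mfI^N$ is not monotone in $t$, so one cannot simply invoke a Polya-type theorem. I would verify the hypotheses of Theorem \ref{thm-DD-conv0} by checking (i) convergence of the finite-dimensional distributions in $x$ for each fixed $t$ (handled above), and (ii) a modulus-of-continuity estimate in $t$ uniformly in $x$, which follows because the jumps in $t$ of $\bar\mfI^N(t,x)$ are either infections (controlled by the Lipschitz bound on $\bar A^N$) or recoveries (controlled by the uniform-in-$x$ Lipschitz bound on $\bar R^N$). Monotonicity of $\bar\mfI^N(t,\cdot)$ is preserved in the limit, and continuity in $t$ of the limit follows from the Lipschitz bounds above.

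Finally, uniqueness of the limiting system is established by a Gronwall argument: given two solutions, \eqref{eqn-barS}--\eqref{eqn-bar-Upsilon} combined with $\bar S\le 1$, $\bar\lambda\le\lambda^\ast$ and $\bar\mfI(t,x)\le\bar I(t)$ yield a linear integral inequality for the sup-norm of the differences on $[0,T]$, forcing them to vanish. Once uniqueness is proven, tightness plus identification of every subsequential limit with the same solution upgrades weak convergence to convergence in probability, and Assumption \ref{AS-lambda} together with the Lipschitz bounds provides the local uniformity in $(t,x)$. The expression \eqref{eqn-barIt} then follows from \eqref{eqn-barI} by setting $x=t+\bar x$ and observing that the first integral runs over $[0,\bar x]$.
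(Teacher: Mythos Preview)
Your proposal tracks the paper's proof closely: the martingale decomposition of $\bar A^N$ with Doob's inequality, the split $\bar\mfI^N=\bar\mfI^N_0+\bar\mfI^N_1$ with conditional centering of the indicators, Theorem~\ref{thm-DD-conv0} for the two-parameter remainders, and Gronwall for uniqueness are exactly the paper's ingredients. Two caveats worth noting: $\eta_i$ is a \emph{functional} of $\lambda_i$ (the hitting time of zero), not independent of it---what you actually need (and have) is independence of $\eta_i$ from $Q$ and from $(\lambda_j)_{j<i}$, hence from $\tau^N_i$; and your sketch of condition~(ii) of Theorem~\ref{thm-DD-conv0} is too coarse---the criterion is applied to the \emph{centered} remainders $V^N,Y^N$, not to $\bar\mfI^N$ itself, the increments in $t$ also include an aging-across-$x$ term you omit, and the $1/\delta$ factor must be absorbed, which the paper does by showing that for each bound the $\limsup_N$ of the probability is in fact zero once $\delta$ is small (this is where most of the technical effort lies).
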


\bigskip

\section{PDE Models} \label{sec-PDE}

One can regard $\bar{\mfI}(t,x)$ as the `distribution function' of $\bar{I}(t) = \bar{\mfI}(t,t+\bar{x})$ over the `ages' $x \in [0,t+\bar{x})$ for each fixed $t$. If $x\mapsto \bar{\mfI}(t,x)$ is absolutely continuous, we denote by $\bar{\mfi}(t,x)
 =\bar{\mfI}_x(t,x)$ the density function of $\bar{\mfI}(t,x)$ with respect to $x$.  
Note that $\bar{S}(t)=0$ for $t<0$ and $\bar{\mfi}(t,x)=0$ both for $t<0$ and $x < 0$.

\subsection{The case $F$ absolutely continuous}
In this subsection, we assume that $F$ is absolutely continuous, $F(dx)=f(x)dx$, and we denote by $\mu(x)$ the hazard function of the r.v. $\eta$, i.e., $\mu(x):=f(x)/F^c(x)$ for $x\ge 0$.
 If the density function $\bar{\mfi}(t,x)$ exists, we obtain the following PDE representation. 
  
\begin{prop} \label{prop-PDE}
Suppose that $F$ is absolutely continuous, with the density $f$, and that $\bar{\mfI}(0,x)$ is differentiable with respect to $x$, with the density function $\bar{\mfi}(0,x)$. Then for $t>0$, the increasing function $\bar{\mfI}(t,\cdot)$ is absolutely continuous, and $(t,x)$ a.e. in $(0,+\infty)^2$, 
\begin{align} \label{eqn-barI-density-PDE-mu}
 \frac{\partial \bar{\mfi}(t,x)}{\partial t} +  \frac{\partial \bar{\mfi}(t,x)}{\partial x}
 & = -\mu(x) \bar{\mfi}(t,x) \,,   
\end{align}
with the  initial condition $\bar{\mfi}(0,x)= \bar{\mfI}_x(0,x)$ for $x \in [0,\bar{x}]$, and
the boundary condition
\begin{equation}\label{BC}
\bar{\mfi}(t,0)=\bar{S}(t)\int_0^{t+\bar{x}}\frac{\bar{\lambda}(x)}{\frac{F^c(x)}{F^c(x-t)}} \bar{\mfi}(t,x)dx\,,
\end{equation}
with the convention that $F^c=1$ on $\R_-$,  and that the integrand in \eqref{BC} is zero when $F^c(x)=0$. 

In addition, 
\begin{equation} \label{eqn-barS-der}
\bar{S}'(t) = -  \bar{\mfi}(t,0),\quad\text{and }\ \bar{S}(0)=1-\bar{I}(0)\,.
\end{equation}

Moreover, the PDE \eqref{eqn-barI-density-PDE-mu} has a unique solution 
which is given as follows. For $x\ge t$, 
\begin{equation}\label{ident1}
 \bar{\mfi}(t,x)=\frac{F^c(x)}{F^c(x-t)}\bar{\mfi}(0,x-t) \, ,
 \end{equation}
 while for $t>x$,
 \begin{equation}\label{ident3}
\bar{\mfi}(t,x)=F^c(x)\bar{\mfi}(t-x,0)\,,
\end{equation}
and the boundary function is the unique solution of the integral equation
\begin{equation} \label{ident2}
\bar{\mfi}(t,0)= \left(  \bar{S}(0)-\int_0^t\bar{\mfi}(s,0)ds \right) \left( \int_0^{\bar{x}} \bar\lambda(y+t)  \bar{\mfi}(0,y)dy   + \int_0^t  \bar\lambda(t-s) \bar{\mfi}(s,0)ds \right)\,.
\end{equation}
\end{prop}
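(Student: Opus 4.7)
\bigskip

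\noindent\textbf{Proof proposal.} The whole statement can be derived directly from the representation \eqref{eqn-barI} by exploiting the absolute continuity hypotheses; the PDE is then checked by differentiation and the boundary condition is obtained by rewriting \eqref{eqn-overline-cal-I-2} in terms of $\bar{\mfi}(t,\cdot)$. I would proceed in four steps.

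\emph{Step 1: explicit formulas for $\bar{\mfi}(t,x)$ on $\{x>t\}$ and $\{x<t\}$.} Since $F(dy)=f(y)dy$ and $\bar{\mfI}(0,dy)=\bar{\mfi}(0,y)dy$, the representation \eqref{eqn-barI} becomes, for $x<t$,
\[ \bar{\mfI}(t,x)=\int_{t-x}^t F^c(t-s)\bar{\Upsilon}(s)ds=\int_0^x F^c(u)\bar{\Upsilon}(t-u)du\,,\]
and for $x>t$,
\[ \bar{\mfI}(t,x)=\int_0^{x-t}\frac{F^c(t+y)}{F^c(y)}\bar{\mfi}(0,y)dy+\int_0^t F^c(t-s)\bar{\Upsilon}(s)ds\,.\]
Differentiating in $x$ and recalling from \eqref{eqn-bar-Upsilon} that $\bar{\Upsilon}(t)=\bar{\mfi}(t,0)$, this immediately gives \eqref{ident3} on $\{t>x\}$ and \eqref{ident1} on $\{x>t\}$, and in particular proves the absolute continuity of $x\mapsto \bar{\mfI}(t,x)$.

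\emph{Step 2: PDE \eqref{eqn-barI-density-PDE-mu} and equation \eqref{eqn-barS-der}.} With the explicit formulas in hand, a direct application of the chain rule on each region gives the PDE. For $x<t$, $\partial_t\bar{\mfi}+\partial_x\bar{\mfi}=-f(x)\bar{\mfi}(t-x,0)=-\mu(x)\bar{\mfi}(t,x)$. For $x>t$, writing $\bar{\mfi}(t,x)=[F^c(x)/F^c(x-t)]\bar{\mfi}(0,x-t)$ and using that $\partial_t[F^c(x-t)]=f(x-t)$, the cross terms cancel and only $-[f(x)/F^c(x-t)]\bar{\mfi}(0,x-t)=-\mu(x)\bar{\mfi}(t,x)$ survives. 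Equation \eqref{eqn-barS-der} is obtained by differentiating \eqref{eqn-barS} and using $\bar{\Upsilon}(t)=\bar{\mfi}(t,0)$.

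\emph{Step 3: boundary condition \eqref{BC}.} Starting from $\bar{\mfi}(t,0)=\bar{\Upsilon}(t)=\bar{S}(t)\overline{\sI}(t)$ with $\overline{\sI}$ given by \eqref{eqn-overline-cal-I-2}, I invert the relations from Step 1: on $\{x<t\}$, formula \eqref{ident3} yields $\bar{\mfi}(t-x,0)=\bar{\mfi}(t,x)/F^c(x)$, and on $\{x>t\}$, formula \eqref{ident1} yields $\bar{\mfi}(0,x-t)=[F^c(x-t)/F^c(x)]\bar{\mfi}(t,x)$. Making the change of variables $x=t-s$ in the second integral of \eqref{eqn-overline-cal-I-2} and $x=y+t$ in the first, and using the convention $F^c\equiv 1$ on $\R_-$, both pieces merge into the single integral $\int_0^{t+\bar{x}}\bar\lambda(x)[F^c(x-t)/F^c(x)]\bar{\mfi}(t,x)dx$, which is exactly \eqref{BC}.

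\emph{Step 4: uniqueness.} Any classical solution of the transport equation \eqref{eqn-barI-density-PDE-mu} with the prescribed initial and boundary data must, by the method of characteristics along lines $x-t=\text{const}$, satisfy \eqref{ident1} above the diagonal and \eqref{ident3} below it. Substituting these back into \eqref{BC} and using \eqref{eqn-barS-der} gives the closed Volterra equation \eqref{ident2} for the unknown boundary trace $t\mapsto\bar{\mfi}(t,0)$. A standard Picard/Gronwall argument, relying on $\bar\lambda\le\lambda^\ast$ and $\bar{S}(t)\le 1$, shows that \eqref{ident2} has a unique continuous solution on any bounded interval; this then determines $\bar{\mfi}(t,x)$ everywhere through \eqref{ident1}--\eqref{ident3}.

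The step I expect to require the most care is the boundary computation of Step 3, and the handling of the convention $F^c=1$ on $\R_-$ together with the set $\{F^c(x)=0\}$ where the integrand is declared zero; one must argue that $\bar{\mfi}(t,x)$ vanishes on $\{F^c(x)=0\}$ (which is immediate from \eqref{ident1}--\eqref{ident3}) so that the ratios appearing in \eqref{BC} are well defined as limits.
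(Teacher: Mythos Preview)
Your proposal is correct and is in fact somewhat more economical than the paper's own argument. The difference is one of ordering.

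The paper first computes $\bar{\mfI}_t+\bar{\mfI}_x$ from \eqref{eqn-barI}, differentiates this sum in $x$ (in the distributional sense) to obtain an intermediate, non--closed form of the PDE,
\[
\partial_t\bar{\mfi}+\partial_x\bar{\mfi}=-\bone_{x\ge t}\frac{f(x)}{F^c(x-t)}\bar{\mfi}(0,x-t)-\bone_{t>x}f(x)\bar{\mfi}(t-x,0),
\]
and only afterwards integrates this along characteristics to recover the explicit formulas \eqref{ident1}--\eqref{ident3}, which are finally substituted back to rewrite the right--hand side as $-\mu(x)\bar{\mfi}(t,x)$. You instead read off \eqref{ident1}--\eqref{ident3} directly by differentiating \eqref{eqn-barI} in $x$ (this is exactly the paper's formula \eqref{barI-partial-x}, which the paper computes but does not immediately exploit), and then verify \eqref{eqn-barI-density-PDE-mu} by the chain rule on each region. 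Your route avoids the detour through the intermediate PDE and the subsequent characteristic integration; the paper's route has the pedagogical advantage of showing how the PDE emerges before one knows the solution, and of making explicit that the closed form \eqref{eqn-barI-density-PDE-mu} is equivalent to the raw differentiated equation.

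For Step~4, both arguments coincide: characteristics reduce uniqueness of the PDE to uniqueness of the boundary trace, and the latter follows from Gronwall applied to \eqref{ident2}. The paper adds one small refinement you might adopt: it first shows that any nonnegative solution of \eqref{ident2} also solves the equation with $(\bar S(0)-\int_0^t u(s)ds)^+$ in place of $\bar S(0)-\int_0^t u(s)ds$, which guarantees a priori that $\int_0^t u(s)ds\le\bar S(0)$ and hence that the factor stays in $[0,1]$; this makes the Lipschitz estimate leading to Gronwall completely clean.
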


 \begin{remark} 
The PDE \eqref{eqn-barI-density-PDE-mu} can be considered as a linear equation, with a nonlinear boundary condition which is the integral equation \eqref{ident2}. 

It follows from \eqref{ident1} and \eqref{ident3} that $F^c(x)=0$ implies that $\bar{\mfi}(t,x)=0$. This is why we can impose that the integrand in the right hand side of \eqref{BC} is zero whenever $F^c(x)=0$.

We remark that the PDE given in \cite{KM32} resembles that given in \eqref{eqn-barI-density-PDE-mu}, see equations (28)--(29), see also equation (2.2) in \cite{inaba2001kermack}. In particular, the  function $\mu(x)$ is interpreted as the recovery rate at infection age $x$. Equivalently, it is the hazard function of the infectious duration. 
 \end{remark}
 
 \begin{remark}\label{infinite-dim}
 In a sense, what we do in the present paper can be interpreted as follows: we replace the two--dimensional system of equations with memory \eqref{eqn-barS}--\eqref{eqn-overline-cal-I-2} (with, see \eqref{eqn-bar-Upsilon},  $\bar{\Upsilon}(t)$ replaced by $\bar{S}(t)  \overline{\sI}(t)$) by the infinite dimensional system of ODE-PDE  \eqref{eqn-barS}--\eqref{eqn-barI-density-PDE-mu}-\eqref{BC} (with, see again \eqref{eqn-bar-Upsilon}, $\bar{\Upsilon}(t)$ replaced by $\bar{\mfi}(t,0)$).
 
 At the level of our population of size $N$, we have a two--dimensional non--Markov process $(S^N(t),\mathcal{I}^N(t))$.
 For any $t\ge0$, let $\bar{\mfi}^N(t)$ denote the measure whose distribution function is $x\mapsto\bar{\mfI}^N(t,x)$.
 Theorem \ref{thm-FLLN} implies that locally uniformly in $t$, $\bar{\mfi}^N(t)$ converges weakly to the measure which has the density $\bar{\mfi}(t,x)$ w.r.t. Lebesque's measure. $\bar{\mfi}^N(t)$ is a point measure which assigns the mass $N^{-1}$ to any $x$ which is the infection age of one of the individuals  infected at time $t$.
 Clearly, from the knowledge of $\bar{\mfi}^N(t)$, we can deduce the values of both $I^N(0)$ and $A^N(t)$, hence of $S^N(0)$ and of $S^N(t)$ (see \eqref{eqn-Sn-rep}). 
  Note that the points of the measure  $\bar{\mfi}^N(t)$ which are larger than $t$ are the 
  $\{\tilde{\tau}^N_{j,0}+t,  1\le j\le I^N(0)\}$, and those which are less than $t$ are the
  $\{t-\tau^N_i, 1\le i\le A^N(t)\}$. Hence from \eqref{eqn-cI-n}, $\mathcal{I}^N(t)$ is a function of both $\bar{\mfi}^N(t)$
  and the $\lambda_i$'s. The same is true for
   $\Upsilon^N(t)$.  Conditionally upon the $\lambda_i$'s,   the process $\bar{\mfi}^N(t)$
  is a measure-valued Markov process, which evolves as follows. Each point $x$ which belongs to it 
  increases at speed $1$, dies at rate $\mu(x)$, and new points are added at rate $\Upsilon^N(t)$.  $\bar{\mfi}^N(t)$ is determined by a sequence of at most $N$ positive numbers; it can be considered as an element of $\cup_{k=1}^N\R^k$. We have ``Markovianized'' the two--dimensional non--Markov process $(S^N(t),\mathcal{I}^N(t))$, at the price of increasing dramatically the dimension. 
  
  Note that the pair composed of $\bar{\mfi}^N(t)$ and the collection $\{\lambda^0_j, 1\le j\le I^N(0); \lambda_i, 1\le i\le A^N(t)\}$ is a Markov process with values in  $\cup_{k=1}^N(\R\times D)^k$. $\bar{\mfi}^N(t)$ evolves as above, and 
  each new $\lambda_i$ is a random element of $D$ with the same law, independent of everything else.
  
  We expect to write and study the equation for the measure--valued Markov process $\bar{\mfi}^N(t)$ in a future work.
 \end{remark}
 
\begin{remark}\label{rem-lambda-indep1}
Recall the special case in Remark \ref{rem-R0} with  $\lambda_i(t)=\tilde\lambda(t)\bone_{t<\eta_i}$, where $\tilde\lambda(t)$ is a deterministic function.
  Then
 $\bar\lambda(t)=\tilde\lambda(t)F^c(t)$, and $\E\big[\lambda^0(t)|\tilde\tau^N_{0}=y\big]=\tilde\lambda(t+y)\frac{F^c(t+y)}{F^c(y)}$. 
 In that case, the boundary condition in \eqref{BC} becomes 
 $$\bar\mfi(t,0) = \bar{S}(t) \int_0^{t+\bar{x}} \tilde\lambda(x)  \bar{\mfi}(t,x) dx$$
 This is usually how the boundary condition is imposed in the literature of PDE epidemic models (see, e.g., \cite[equation (2.5)]{inaba2001kermack}, \cite[equation (1.1)]{magal2013two} and \cite[equation (2)]{foutel2020individual}).
 This expression has clearly a very intuitive interpretation. 
  $\bar{\mfi}(t,0)$ is the instantaneous rate for an individual to get infected at time $t$ (resulting in a newly infectious individual with a zero age of infection), 
 while the right hand side is the instantaneous infection rate by the existing infectious population at time $t$, which depends on all the infectious individuals with all ages of infection. This of course includes time $t=0$, which formulates a constraint on the initial condition $\{\bar{\mathfrak I}(0,x)\}_{0\le x\le \bar{x}}$. 
\end{remark}

\begin{proof}
By the fact that $F$ has a density, we see that the two partial derivatives of $\bar\mfI$ exist $(t,x)$ a.e. From \eqref{eqn-barI}, they satisfy
\begin{align} \label{barI-partial-t}
 \bar{\mfI}_t(t,x) & = - \bone_{x\ge t} \frac{F^c(x)}{F^c(x-t)}  \bar{\mfI}_x(0,x-t)  -  \int_0^{(x-t)^+} \frac{f(t+y)}{F^c(y)}  \bar{\mfI}_x(0, y)dy   \non\\
& \qquad + \bar{\mfI}_x(t,0) 
-  \bone_{t> x}  F^c(x)\bar{\mfI}_x(t-x,0) - \int_{(t-x)^+}^t f(t-s) \bar{\mfI}_x(s,0) ds,
\end{align}
and
\begin{align} \label{barI-partial-x}
\bar{\mfI}_x(t,x)  &=\bone_{x\ge t}  \frac{F^c(x)}{F^c(x-t)}  \bar{\mfI}_x(0,x-t) 
 +   \bone_{t> x} F^c(x) \bar{\mfI}_x(t-x,0).
\end{align}
Thus, summing up \eqref{barI-partial-t} and \eqref{barI-partial-x}, we obtain for $t>0$ and  $x>0$,
\begin{align} \label{barI-partial-t+x}
 \bar{\mfI}_t(t,x)  +\bar{\mfI}_x(t,x) 
& =-  \int_0^{(x-t)^+} \frac{f(t+y)}{F^c(y)}  \bar{\mfI}_x(0, y)dy   + \bar{\mfI}_x(t,0) 
- \int_{(t-x)^+}^t f(t-s) \bar{\mfI}_x(s,0) ds\,. 
\end{align}

Denote  $ \bar{\mfI}_{x,t}(t,x) = \frac{\partial^2 \bar{\mfI}(t,x)}{\partial x \partial t} = \frac{\partial }{\partial x}  \bar{\mfI}_t(t,x)   $ and 
$ \bar{\mfI}_{x,x}(t,x)= \frac{\partial^2 \bar{\mfI}(t,x)}{\partial x \partial x}$. 
By taking the derivative on both sides of \eqref{barI-partial-t+x} with respect to $x$ (possibly in the distributional sense for each term on the left), we obtain
for $t>0$ and  $x>0$, 
\begin{align} \label{barI-partial-t+x-2}
 \bar{\mfI}_{x,t}(t,x)  +\bar{\mfI}_{x,x}(t,x) 
 &= -  \bone_{x\ge t}  \frac{f(x)}{F^c(x-t)}  \bar{\mfI}_x(0, x-t) -\bone_{t>x} f(x)\bar{\mfI}_{x}(t-x,0). 
\end{align}
Since $ \frac{\partial^2 \bar{\mfI}(t,x)}{\partial x \partial t} =  \frac{\partial^2 \bar{\mfI}(t,x)}{\partial t \partial x}$, we obtain 
 the expression
 \begin{align} \label{eqn-barI-density-PDE}
 \frac{\partial \bar{\mfi}(t,x)}{\partial t} +  \frac{\partial \bar{\mfi}(t,x)}{\partial x}
 & = - \bone_{x\ge t}  \frac{f(x)}{F^c(x-t)}  \bar{\mfi}(0, x-t)  -   \bone_{t> x}  f(x) \bar\mfi(t-x,0) \,. 
\end{align}

As concerns the boundary condition, we note that, given \eqref{eqn-barS-der}, \eqref{ident1} and \eqref{ident3}, \eqref{BC} and \eqref{ident2} are equivalent. Hence we will establish \eqref{ident2},  \eqref{eqn-barS-der}, \eqref{ident1} and \eqref{ident3}.

 For the boundary condition $\bar\mfi(t,0)$,
 by   \eqref{eqn-overline-cal-I-2} and \eqref{eqn-bar-Upsilon}, we have
\begin{align*}
\bar{\mfi}(t,0)= \bar{S}(t) \left( \int_0^{\bar{x}} \bar\lambda(y+t)  \bar{\mfi}(0,y)dy   + \int_0^t  \bar\lambda(t-s) \bar{\mfi}(s,0)ds \right), 
\end{align*}
where by \eqref{eqn-barS}, 
\[  \bar{S}(t) = \bar{S}(0)-\int_0^t\bar{\mfi}(s,0)ds\,.\]
Thus we obtain the expression in \eqref{ident2}. 
 We next prove that equation \eqref{ident2} has a unique non--negative solution. 
 Observe that $u(t) = \bar\mfi(t,0)$ is also a solution to 
 \begin{equation}\label{eq+}
u(t)=\left(\int_0^{\bar{x}} \bar\lambda(y+t)  \bar{\mfi}(0,y)dy+\int_0^t \bar\lambda(t-s)u(s)ds\right)\left(\bar{S}(0)-\int_0^t u(s)ds\right)^+,
\end{equation}
and any non--negative solution of \eqref{ident2} solves \eqref{eq+}.

First, note that since for any $t\ge0$, $0\le \bar{\lambda}(t)\le \lambda^\ast$, 
\begin{equation} \label{eqn-lambda-mfi-bound0}
0\le  \int_0^{\bar{x}} \bar\lambda(y+t)  \bar{\mfi}(0,y)dy \le\lambda^\ast\bar{I}(0),
\end{equation}
from which we conclude that 
$\int_0^tu(s) ds\le\bar{S}(0)$.
 Indeed, if that were not the case, there would exist a time $T_{\bar{S}(0)}<t$ such that 
  $\int_0^{T_{\bar{S}(0)}} u(s) ds=\bar{S}(0)$,
 hence 
 $\int_0^t u(s) ds\ge\bar{S}(0)$ 
and from \eqref{eq+}, we would have $u(t)=0$ for any $t\ge T_{\bar{S}(0)}$, so that indeed $\int_0^tu(s) ds\le\bar{S}(0)$.

 Under Assumption \ref{AS-lambda}, using \eqref{eqn-lambda-mfi-bound0},  if $u_1(t)$ and $u_2(t)$ are two nonnegative integrable solutions, then
 \begin{align*}
 |u_1(t) - u_2(t)|
 & \le  \bar{S}(0) \int_0^t\bar\lambda(t-s)  |u_1(s) - u_2(s) |ds + \lambda^* (\bar{I}(0)+\bar{S}(0) ) \int_0^t | u_1(s) - u_2(s)|ds
\\
& \le 2 \lambda^* \int_0^t   |u_1(s) - u_2(s)|ds\,, 
 \end{align*}
 which, combined with Gronwall's Lemma, implies that $u_1 \equiv u_2$. Now existence is provided by the fact that the function $\bar\mfi(t,0)$ is a non--negative solution of \eqref{eq+}. 
 
 Note also that clearly, using a combination of an argument similar to that used for uniqueness, and of the classical estimate on Picard iterations for ODEs, one could establish that the sequence defined by $u^{(0)}(t)\equiv0$ and
 for $n\ge0$,
 \begin{align*}
u^{(n+1)}(t)= \left(  \bar{S}(0)-\int_0^t u^{(n)}(s)ds \right) \left( \int_0^{\bar{x}} \bar\lambda(y+t)  \bar{\mfi}(0,y)dy   + \int_0^t  \bar\lambda(t-s) u^{(n)}(s)ds \right), 
\end{align*}
given $ \bar{\mfi}(0,\cdot)$, 
is a Cauchy sequence in $C(\RR_+)$, hence existence.

We next derive the explicit solution expressions in \eqref{ident1} and \eqref{ident3}. It follows from 
\eqref{eqn-barI-density-PDE}Â that
for $x\ge t$, $0\le s\le t$,
\[ \frac{\partial \bar{\mfi}}{\partial  s}(s, x-t+s)=-\frac{f(x-t+s)}{F^c(x-t)}\bar{\mfi}(0,x-t),\]
while for $t>x$, $0\le s\le x$,
\[ \frac{\partial \bar{\mfi}}{\partial  s}(t-x+s,s)=-f(s)\bar{\mfi}(t-x,0)\,.\]
Integrating the first identity from $s=0$ to $s=t$, we deduce that for $x\ge t$,
\begin{align*}
\bar{\mfi}(t,x)&=\bar{\mfi}(0,x-t)-\frac{\bar{\mfi}(0,x-t)}{F^c(x-t)}\int_0^t f(x-t+s)ds\\
&=\bar{\mfi}(0,x-t)\left(1-\frac{F(x)-F(x-t)}{F^c(x-t)}\right)\\
&=\frac{F^c(x)}{F^c(x-t)}\bar{\mfi}(0,x-t)\, ,
\end{align*}
so that for $x\ge t$,
\begin{equation}\label{identity1}
\frac{f(x)}{F^c(x-t)}\bar{\mfi}(0,x-t)=\frac{f(x)}{F^c(x)}\bar{\mfi}((t,x)\,.
\end{equation}
Now for $t>x$, we integrate the second identity from $s=0$ to $s=x$, and get
\begin{align*}
\bar{\mfi}(t,x)&=\bar{\mfi}(t-x,0)-\bar{\mfi}(t-x,0)\int_0^x f(s)ds\\
&=F^c(x)\bar{\mfi}(t-x,0)\,.
\end{align*}
Hence for $t>x$, 
\begin{equation}\label{identity2}
f(x)\bar{\mfi}(t-x,0)=\frac{f(x)}{F^c(x)}\bar{\mfi}(t,x)\,.
\end{equation}
Clearly,
 \eqref{ident1} is equivalent to  \eqref{identity1}, \eqref{ident3} is equivalent to \eqref{identity2}, and \eqref{eqn-barI-density-PDE-mu} follows from \eqref{eqn-barI-density-PDE}, \eqref{identity1} and \eqref{identity2}. 
 \end{proof}

 \begin{coro}\label{coroFormula}
 The formula \eqref{eqn-overline-cal-I-2} for $ \overline{\sI}(t) $ can be rewritten 
 \begin{equation}\label{eqFormula}
  \overline{\sI}(t) =\int_0^{t+\bar{x}}\frac{\bar\lambda(y)}{\frac{F^c(y)}{F^c(y-t)}}\bar\mfI(t,dy),
  \end{equation}
  where $F^c(z)=1$, for $z\le0$.
 \end{coro}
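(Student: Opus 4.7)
The plan is to decompose $\bar{\mfI}(t,dx)$ into two measures supported on $[0,t]$ and $(t,t+\bar{x}]$ respectively (coming from the newly infected and the initially infected), and then evaluate the right hand side of \eqref{eqFormula} piece by piece, matching each piece to one of the two summands in \eqref{eqn-overline-cal-I-2}.

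Concretely, I would first read off $\bar{\mfI}(t,dx)$ from the representation \eqref{eqn-barI}. On the interval $(0,t)$, only the second integral in \eqref{eqn-barI} contributes, and differentiating by $x$ (using Leibniz's rule, which applies since $F^c$ is c\`adl\`ag and $\bar{\Upsilon}$ is continuous) gives the absolutely continuous density
\[
\bar{\mfI}(t,dx) = F^c(x)\,\bar{\Upsilon}(t-x)\,dx, \qquad x\in(0,t).
\]
On the interval $(t,t+\bar{x}]$, only the first integral in \eqref{eqn-barI} depends on $x$; treating it as the pushforward of $\bar{\mfI}(0,dy)$ under $y\mapsto y+t$ weighted by $F^c(y+t)/F^c(y)$, we get
\[
\bar{\mfI}(t,dx) = \frac{F^c(x)}{F^c(x-t)}\,\bar{\mfI}\bigl(0,d(x-t)\bigr), \qquad x\in(t,t+\bar{x}].
\]

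Next, I substitute these two expressions into the right hand side of \eqref{eqFormula}, which (since we agreed $F^c(z)=1$ for $z\le 0$) equals
\[
\int_0^{t+\bar{x}} \bar{\lambda}(y)\,\frac{F^c(y-t)}{F^c(y)}\,\bar{\mfI}(t,dy).
\]
On $[0,t]$, $F^c(y-t)=1$ and the ratio $F^c(y)/F^c(y)$ cancels, leaving $\int_0^t \bar{\lambda}(y)\,\bar{\Upsilon}(t-y)\,dy$, which equals $\int_0^t\bar{\lambda}(t-s)\bar{\Upsilon}(s)\,ds$ after the substitution $s=t-y$. On $(t,t+\bar{x}]$, the factor $F^c(y)/F^c(y-t)$ from the density cancels against the factor $F^c(y-t)/F^c(y)$ in the integrand, and the change of variables $y=z+t$ yields $\int_0^{\bar{x}}\bar{\lambda}(z+t)\,\bar{\mfI}(0,dz)$. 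Adding the two pieces reproduces \eqref{eqn-overline-cal-I-2}.

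The only delicate point is the convention about the integrand when $F^c(y)=0$: on the set where this happens, the density of $\bar{\mfI}(t,dx)$ restricted to $(t,t+\bar{x}]$ already contains a factor $F^c(y)$ (via the ratio $F^c(y)/F^c(y-t)$), so the product $\bar{\lambda}(y)\,F^c(y-t)/F^c(y)\cdot\bar{\mfI}(t,dy)$ is unambiguous after the cancellation; likewise on $(0,t)$ the factor $F^c(y)$ in the density cancels the $F^c(y)$ in the denominator. I expect this verification, together with the careful bookkeeping of the support of the two measures, to be the only real step; there are no estimates to obtain.
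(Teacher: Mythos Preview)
Your proposal is correct and follows essentially the same approach as the paper: both identify $\bar{\mfI}(t,dx)=F^c(x)\bar{\Upsilon}(t-x)\,dx$ on $(0,t)$ and $\bar{\mfI}(t,t+dy)=\frac{F^c(t+y)}{F^c(y)}\bar{\mfI}(0,dy)$ on $(t,t+\bar{x}]$ directly from \eqref{eqn-barI}, and then match the two pieces to the two terms of \eqref{eqn-overline-cal-I-2}. The only cosmetic difference is direction---the paper rewrites \eqref{eqn-overline-cal-I-2} into \eqref{eqFormula}, whereas you evaluate \eqref{eqFormula} and recover \eqref{eqn-overline-cal-I-2}---but the substance is identical.
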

 \begin{proof} 
 We first deduce from \eqref{eqn-barI} that for $t>x$, $x\mapsto\bar\mfI(t,x)$ is differentiable, and
$\bar\mfI(t,dx)=F^c(x)\bar\Upsilon(t-x)$, and for fixed $t$, on $[0, \bar{x}]$, the function $y\to \bar{\mathfrak{I}}(t,t+y)$ is of finite total variation and satisfies $\bar{\mathfrak{I}}(t,t+dy) = \frac{F^c(t+y)}{F^c(y)} \bar{\mathfrak{I}}(0,dy)$.
Inserting the resulting formulas for $ \bar{\mathfrak{I}}(0,dy)$ and $\bar\Upsilon$ in the  first and second integrals of the right hand side of \eqref{eqn-overline-cal-I-2}, we obtain
   \begin{align*}
  \overline{\sI}(t)&= \int_0^{\bar{x}} \bar\lambda(t+y)  \bar{\mfI}(0,dy) + \int_0^t \frac{\bar\lambda(x)}{F^c(x)}\bar{\mfI}(t,dx) \\
  &=\int_0^{\bar{x}} \bar\lambda(t+y)\frac{F^c(y)}{F^c(t+y)}\bar\mfI(t,t+dy) + \int_0^t \frac{\bar\lambda(x)}{F^c(x)}\bar{\mfI}(t,dx)\,,
  \end{align*}
  from which the result follows.
 \end{proof}

 \begin{remark} \label{rem-lambda-indep}
In the special case  $\lambda_i(t)=\tilde\lambda(t)\bone_{t<\eta_i}$ as discussed in Remark \ref{rem-lambda-indep1}, \eqref{eqFormula} reduces to the very simple formula
 \begin{equation}  \label{eqn-overline-sI-ind}
  \overline{\sI}(t) =\int_0^{t+\bar{x}}\tilde\lambda(y) \bar\mfI(t,dy)\,.
  \end{equation}
   A similar formula holds if we replace the deterministic function $\tilde\lambda(t)$ by a copy $\lambda_i(t)$ of a random function, which is independent of $\eta_i$, as discussed in Remark \ref{rem-R0}, and whose expectation is $\tilde\lambda(t)$.
 Then, we have
\begin{align} \label{eqn-barUpsilon-ind}
\bar\Upsilon(t) = \bar{S}(t) \overline\sI(t)  &= \bar{S}(t) \int_0^{t+\bar{x}} \tilde\lambda(x)  \bar{\mfI}(t,d x)  \non \\
&= \bar{S}(t) \int_0^{t+\bar{x}} \tilde\lambda(x)  \bar{\mfi}(t,x) dx 
\end{align}
Since $\bar\mfi(t,0) = \bar\Upsilon(t)$, the results above can be stated using this expression of $\bar\Upsilon$.  
 \end{remark}

 In the special case of exponentially distributed infectious periods, i.e. $\mu(x)\equiv\mu$, we obtain the following well known results, see, e.g., \cite{thieme1993may,inaba2004mathematical,magal2013two}.  
\begin{coro} \label{coro-PDE-exp-SIR}
If the c.d.f. $F(t) = 1-e^{-\mu t}$, we have for $t>0$ and $x>0$, 
\begin{align} \label{eqn-barI-density-PDE-exponential}
 \frac{\partial \bar{\mfi}(t,x)}{\partial t} +  \frac{\partial \bar{\mfi}(t,x)}{\partial x}
 = -\mu  \bar{\mfi}(t,x) 
\end{align}
with the initial condition $\bar{\mfi}(0,x)$ given for $x \in [0, \bar{x}]$ and  the boundary condition  for
$\bar{\mfi}(t,0)$ as given in \eqref{ident2}. 
\end{coro}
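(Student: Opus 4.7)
The plan is to invoke Proposition \ref{prop-PDE} directly, since Corollary \ref{coro-PDE-exp-SIR} is simply the specialization of that proposition to the exponential infectious period distribution. First I would check that the hypotheses of Proposition \ref{prop-PDE} are satisfied: when $F(t)=1-e^{-\mu t}$, the density $f(t)=\mu e^{-\mu t}$ exists on $\R_+$, so $F$ is absolutely continuous as required. The initial condition $\bar\mfi(0,x)$ is assumed to exist on $[0,\bar x]$ by hypothesis of the corollary.

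Next I would compute the hazard function explicitly. Writing $F^c(x)=e^{-\mu x}$, we have
\[
\mu(x)\;=\;\frac{f(x)}{F^c(x)}\;=\;\frac{\mu e^{-\mu x}}{e^{-\mu x}}\;=\;\mu,
\]
a constant. Substituting $\mu(x)\equiv\mu$ into \eqref{eqn-barI-density-PDE-mu} immediately produces \eqref{eqn-barI-density-PDE-exponential}.

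Finally I would note that the boundary condition \eqref{ident2} and the ODE \eqref{eqn-barS-der} for $\bar S$ are stated in Proposition \ref{prop-PDE} without any use of the specific form of $\mu(x)$, so they transfer verbatim to the exponential case. Consequently nothing further needs to be verified, and there is no real obstacle in this proof: the only content is the identification $\mu(x)\equiv\mu$, which reduces the general PDE to the standard linear transport equation with constant sink \eqref{eqn-barI-density-PDE-exponential}.
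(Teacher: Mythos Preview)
Your proposal is correct. You simply invoke Proposition~\ref{prop-PDE} and observe that for $F(t)=1-e^{-\mu t}$ the hazard function is constant, $\mu(x)\equiv\mu$, which immediately reduces \eqref{eqn-barI-density-PDE-mu} to \eqref{eqn-barI-density-PDE-exponential}; the boundary condition \eqref{ident2} carries over unchanged.

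The paper takes a slightly different route: rather than appealing to the general result, it redoes the computation from scratch in this special case. Starting from the integral formula \eqref{eqn-barI} with $F^c(t)=e^{-\mu t}$, it writes $\bar\mfI(t,x)$ explicitly, differentiates once in $x$ to obtain the closed form $\bar\mfi(t,x)=\bone_{x\ge t}\,e^{-\mu t}\bar\mfi(0,x-t)+\bone_{x<t}\,e^{-\mu x}\bar\mfI_x(t-x,0)$, and then differentiates this expression in $t$ and $x$ to verify \eqref{eqn-barI-density-PDE-exponential} directly. Your approach is logically more economical, since the corollary is an immediate specialization; the paper's approach has the pedagogical virtue of showing concretely how the derivation of Proposition~\ref{prop-PDE} collapses when the memoryless property makes $F^c(t+y)/F^c(y)=e^{-\mu t}$ independent of $y$.
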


\begin{proof}
In this case, the above proof simplifies. Indeed, we have for $t\ge 0$ and $x\ge 0$, 
\begin{align*}
\bar{\mfI}(t,x) 
& =  \bone_{x\ge t}e^{-\mu t}\bar{\mfI}(0, x-t)     +  \int_{(t-x)^+}^t e^{-\mu(t-s)} \bar{\Upsilon}(s) ds. 
\end{align*}
By taking derivative with respect to $x$ when $t>0$ and $x>0$, we obtain that  equation \eqref{barI-partial-x} becomes
\begin{align*}
\bar{\mfi}(t,x) &=  \bone_{x\ge t} \bar{\mfi}(0, x-t) e^{-\mu t} +   \bone_{x< t} e^{-\mu x} \bar{\mfI}_x(t-x,0).
\end{align*}
Taking derivatives of this equation with respect to $t$ and $x$, we obtain  for $t>0$ and $x>0$, 
\begin{align*} 
 \frac{\partial \bar{\mfi}(t,x)}{\partial t} +  \frac{\partial \bar{\mfi}(t,x)}{\partial x}
 &=  -  \bone_{x\ge t}\bar{\mfi}(0, x-t) \mu e^{-\mu t} -  \mu e^{-\mu x}   
\bar{\mfI}_x(t-x,0) \\
 &= - \mu  \bar{\mfi}(t,x). 
\end{align*}
The boundary conditions follow in the same way as in the general model. 
\end{proof}

\begin{remark} \label{rem-iid-initial}
If the remaining infectious periods of the initially infectious individuals $\{\eta^0_j, j=1,\dots, I^N(0)\}$ are i.i.d. with c.d.f. $F_0$ instead of  depending on the infection age in \eqref{enq-eta0-age}, then we obtain the limits 
\begin{align*}
\bar{\mfI}(t,x) &=   \bar{\mfI}(0, (x-t)^+) F_0^c(t) +  \int_{(t-x)^+}^t F^c(t-s) \bar{\Upsilon}(s) ds, \\
\bar{R}(t) &= \bar{I}(0) F_0(t) + \int_0^t F(t-s) \bar{\Upsilon}(s) ds,  
\end{align*}
(noting that they are not continuous unless $F_0$ is continuous), 
and assuming the density functions exist, we obtain the PDE: 
\begin{align*}
 \frac{\partial \bar{\mfi}(t,x)}{\partial t} +  \frac{\partial \bar{\mfi}(t,x)}{\partial x}
 =  - \left({\bf1}_{x>t}\mu_0(t)  +{\bf1}_{t>x} \mu(x)\right) \bar\mfi(t,x),
\end{align*}
where $\mu_0(t)=f_0(t)/F^c_0(t)$.
As in the proof of  Proposition \ref{prop-PDE}, we obtain that
 the PDE \eqref{eqn-barI-density-PDE-mu} has a unique solution 
which is given as follows. For $x\ge t$, 
\begin{equation*}
 \bar{\mfi}(t,x)= F_0^c(t) \bar{\mfi}(0,x-t) \, ,
 \end{equation*}
 while for $t>x$,
 \begin{equation*}
\bar{\mfi}(t,x)=F^c(x)\bar{\mfi}(t-x,0)\,,
\end{equation*}
and the boundary function is the unique solution of the integral equation
\begin{equation*}
\bar{\mfi}(t,0)= \left(  \bar{S}(0)-\int_0^t\bar{\mfi}(s,0)ds \right) \left( \int_0^{\bar{x}} \bar\lambda(y+t)  \bar{\mfi}(0,y)dy   + \int_0^t  \bar\lambda(t-s) \bar{\mfi}(s,0)ds \right)\,.
\end{equation*}
\end{remark}

\subsection{The general case}

We now generalize the result of Proposition \ref{prop-PDE} to the case where the distribution $F$ is not absolutely continuous.
We denote below by $\nu$ the law of $\eta$, i.e. the measure whose distribution function is $F$. For reasons which will be explained in Remark \ref{re:leftcont} below, we shall in this subsection use the left continuous versions of $F$ and $F^c$. In order to simplify notations, we define 
\[ G(t)=F(t^-),\ \ \ G^c(t)=1-G(t)=F^c(t^-)\,.\]
\begin{prop} \label{prop-PDE-measure}
Suppose that $\bar{\mfI}(0,x)$ is differentiable with respect to $x$, with the density function $\bar{\mfi}(0,x)$. Then for $t>0$, the increasing function $\bar{\mfI}(t,\cdot)$ is absolutely continuous, 
 and the following identity holds: 
\begin{align} \label{eqn-barI-density-PDE-mu-m}
 \frac{\partial \bar{\mfi}(t,x)}{\partial t} +  \frac{\partial \bar{\mfi}(t,x)}{\partial x}
 & = -\frac{\bar{\mfi}(t,x)}{G^c(x)}\nu(dx) \,,   
\end{align}
(i.e., the distribution which appears on the left hand side of \eqref{eqn-barI-density-PDE-mu-m} equals the measure
which has the density $-\frac{\bar{\mfi}(t,x)}{G^c(x)}$ with respect to the measure $\nu$) 
with the initial condition $\bar{\mfi}(0,x)= \bar{\mfI}_x(0,x)$ for $x \in [0,\bar{x}]$, and
the boundary condition
\begin{equation}\label{BC-m}
\bar{\mfi}(t,0)=\bar{S}(t)\int_0^{t+\bar{x}}\frac{\bar{\lambda}(x)}{\frac{G^c(x)}{G^c(x-t)}} \bar{\mfi}(t,x)dx\,,
\end{equation}
with the convention that $G^c=1$ on $\R_-$, and that the integrand in \eqref{BC-m} is zero whenever $G^c(x)=0$.

In addition, 
\begin{equation} \label{eqn-barS-der-m}
\bar{S}'(t) = -  \bar{\mfi}(t,0),\quad\text{and }\ \bar{S}(0)=1-\bar{I}(0)\,.
\end{equation}

Moreover, the PDE \eqref{eqn-barI-density-PDE-mu-m} has a unique solution 
which is given as follows. For $x\ge t$, 
\begin{equation}\label{ident1-m}
 \bar{\mfi}(t,x)=\frac{G^c(x)}{G^c(x-t)}\bar{\mfi}(0,x-t) \, ,
 \end{equation}
 while for $t>x$,
 \begin{equation}\label{ident3-m}
\bar{\mfi}(t,x)=G^c(x)\bar{\mfi}(t-x,0)\,,
\end{equation}
and the boundary function is the unique solution of the integral equation
\begin{equation} \label{ident2-m}
\bar{\mfi}(t,0)= \left(  \bar{S}(0)-\int_0^t\bar{\mfi}(s,0)ds \right) \left( \int_0^{\bar{x}} \bar\lambda(y+t)  \bar{\mfi}(0,y)dy   + \int_0^t  \bar\lambda(t-s) \bar{\mfi}(s,0)ds \right)\,.
\end{equation}
\end{prop}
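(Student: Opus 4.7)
The plan is to follow the proof of Proposition \ref{prop-PDE} closely, with the main new ingredient being the careful handling of the measure $\nu$ (possibly with atoms) in place of an absolutely continuous law. I would exploit the fact that $F^c$ and $G^c$ differ only on the countable set of atoms of $\nu$, so that integrals against Lebesgue measure are unchanged, while pointwise values of the density $\bar{\mfi}$ should be specified using the left-continuous representative $G^c$ in order to be compatible with the PDE formulation.

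First, I would differentiate the integral expression \eqref{eqn-barI} in $x$. The two terms on the right-hand side are supported on $x>t$ and $t>x$ respectively, and differentiation gives (a.e.\ in $x$) the formulas $\bar{\mfi}(t,x)=\frac{F^c(x)}{F^c(x-t)}\bar{\mfi}(0,x-t)$ for $x>t$ and $\bar{\mfi}(t,x)=F^c(x)\bar{\Upsilon}(t-x)$ for $t>x$. Choosing the left-continuous representative yields \eqref{ident1-m} and \eqref{ident3-m}. Evaluating at $x=0$ (where $G^c(0)=1$) gives $\bar{\mfi}(t,0)=\bar{\Upsilon}(t)=\bar{S}(t)\overline{\sI}(t)$ by \eqref{eqn-bar-Upsilon}, and \eqref{eqn-barS-der-m} follows by differentiating \eqref{eqn-barS}. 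Substituting the expression for $\overline{\sI}(t)$ from Corollary \ref{coroFormula} (whose derivation is insensitive to whether $F$ is absolutely continuous) produces the boundary condition \eqref{BC-m}, and plugging \eqref{ident1-m}--\eqref{ident3-m} into \eqref{eqn-overline-cal-I-2} yields the integral equation \eqref{ident2-m}.

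Next, I would establish \eqref{eqn-barI-density-PDE-mu-m}. The key observation is that \eqref{ident1-m} and \eqref{ident3-m} together assert that $U(t,x):=\bar{\mfi}(t,x)/G^c(x)$ is constant along the characteristic lines $t-x=\text{const}$ (both for $t>x$ and for $x>t$), so $\partial_t U+\partial_x U=0$ in the distributional sense on $(0,\infty)^2$. Since $G^c$ depends only on $x$ with $\partial_x G^c=-\nu$ in the measure-theoretic sense, applying the product rule $\bar{\mfi}=G^c\cdot U$ (tested against $\phi\in C^\infty_c((0,\infty)^2)$) gives
\[
\partial_t\bar{\mfi}+\partial_x\bar{\mfi}=G^c(x)\bigl(\partial_t U+\partial_x U\bigr)+U(t,x)\,\partial_x G^c(x)=-\frac{\bar{\mfi}(t,x)}{G^c(x)}\,\nu(dx),
\]
which is \eqref{eqn-barI-density-PDE-mu-m}, with the convention $\bar{\mfi}/G^c=0$ at points where $G^c=0$ (where \eqref{ident1-m}--\eqref{ident3-m} force $\bar{\mfi}=0$ as well). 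Uniqueness of the solution to \eqref{ident2-m} follows exactly as in Proposition \ref{prop-PDE}: using $\bar{\lambda}\le\lambda^\ast$ one argues by contradiction from positivity that any nonnegative solution satisfies $\int_0^t\bar{\mfi}(s,0)\,ds\le\bar{S}(0)$, and Gronwall's lemma applied to the difference of two solutions concludes.

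The main obstacle is giving \eqref{eqn-barI-density-PDE-mu-m} a rigorous meaning in the presence of atoms of $\nu$: the LHS is a distribution while the RHS is a Radon measure on $(0,\infty)^2$, so the equality must be tested against smooth compactly supported $\phi$. The need for the left-continuous $G^c$ (rather than the right-continuous $F^c$) is dictated by exactly this distributional identity: along a characteristic through a jump point $x_0$ of $F$, the jump of $G^c$ is precisely $\nu(\{x_0\})$, and this is what allows the product-rule computation above to identify the atomic part of the RHS without double-counting.
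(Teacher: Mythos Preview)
Your proposal is correct and reaches the same conclusions, but your route to the PDE \eqref{eqn-barI-density-PDE-mu-m} differs from the paper's. The paper works at the level of $\bar{\mfI}$: it first computes $\bar{\mfI}_t(t,x)+\bar{\mfI}_x(t,x)$ (treating the $t$-derivative of $G^c(t+\cdot)$ and $G^c(t-\cdot)$ as measures), then takes $\partial_x$ to obtain the intermediate identity
\[
\partial_t\bar{\mfi}+\partial_x\bar{\mfi}=-\mathbf{1}_{x\ge t}\,\frac{\bar{\mfi}(0,x-t)}{G^c(x-t)}\,\nu(dx)-\mathbf{1}_{x<t}\,\bar{\mfi}(t-x,0)\,\nu(dx),
\]
then integrates this along characteristics on $[0,t)$ and $[0,x)$ to derive \eqref{ident1-m}--\eqref{ident3-m}, and finally substitutes those back to rewrite the right-hand side as $-\bar{\mfi}(t,x)\,G^c(x)^{-1}\nu(dx)$. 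You instead read off \eqref{ident1-m}--\eqref{ident3-m} directly from $\partial_x\bar{\mfI}$ and then deduce the PDE by writing $\bar{\mfi}=G^c\cdot U$ with $U$ constant along characteristics. Your argument is shorter and more transparent; the paper's is more computational but entirely elementary. One point that deserves an extra sentence in your write-up: the Leibniz rule $\partial_x(G^c U)=U\,\partial_x G^c+G^c\,\partial_x U$ for the BV function $G^c$ is only automatic when $U(t,\cdot)$ has no jumps at the atoms of $\nu$. For $t>x$ this holds because $U(t,x)=\bar{\Upsilon}(t-x)$ is continuous; for $x>t$ the jumps of $U(t,\cdot)=\bar{\mfi}(0,\cdot-t)/G^c(\cdot-t)$ sit at $t+\text{atoms}(\nu)$, which for all but countably many $t$ is disjoint from $\text{atoms}(\nu)$, and that null set of exceptional $t$ is harmless for the distributional identity on $(0,\infty)^2$. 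The boundary condition, \eqref{eqn-barS-der-m}, and the uniqueness argument for \eqref{ident2-m} are handled identically in both approaches.
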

\begin{remark}\label{re:leftcont}
The product $\frac{\bar{\mfi}(t,x)}{G^c(x)}\nu(dx)$ can also be rewritten as
\[ \bar{\mfi}(t,x)\times \frac{\nu(dx)}{G^c(x)}\, ,\]
where the second factor can be thought of as the ``hazard measure'', i.e., the generalization of the hazard function, of the r.v. $\eta$. The reason why we want to have $G^c(x)$ in the denominator, and not $F^c(x)$ is the following.
If the support of $\nu$ is $[0,x_{max}]$, and $\nu(\{x_{max}\})>0$, then $F^c(x_{max})=0$, while $G^c(x_{max})>0$
and we need a positive denominator at the point $x_{max}$, since $\nu(\{x_{max}\})>0$.

For consistency, in the present subsection we always choose the left continuous version $G$ (resp. $G^c$) of $F$ (resp. $F^c$). Of course, in the case where $F$ is absolutely continuous this makes no difference.
\end{remark}

\begin{remark}\label{infinite-dim-m}
Remark \ref{infinite-dim} can be extended to the present case of a general distribution function $F$, replacing 
the infinite dimensional system of ODE-PDE  \eqref{eqn-barS}--\eqref{eqn-barI-density-PDE-mu}-\eqref{BC} by 
 \eqref{eqn-barS}--\eqref{eqn-barI-density-PDE-mu-m}-\eqref{BC-m}. 
\end{remark}

\begin{proof}
We first rewrite equation \eqref{eqn-barI} as
\begin{align*}
\bar{\mfI}(t,x) = \int_0^{(x-t)^+}\frac{\bar{\mfI}_x(0,y)}{G^c(y)}G^c(t+y)dy+\int_{(t-x)^+}^t \bar{\mfI}_x(s,0)
G^c(t-s)ds\,.
\end{align*}
Differentiating $\bar{\mfI}(t,x)$ in $x$ can be done exactly as in the proof of Proposition \ref{prop-PDE}. Concerning the differentiation in $t$, the differentiation with respect to $t$ appearing in the integrands $G^c(t+y)$ and $G^c(t-s)$ is now a bit more delicate: those functions have not been assumed to be differentiable. Their derivatives in the distributional sense is a measure, whose bracket with a measurable bounded function makes sense, so that \begin{align*} 
\frac{d}{dt}\int_a^b \frac{\bar{\mfI}_x(0,y)}{G^c(y)}G^c(t+y)dy&=\int_a^b \frac{\bar{\mfI}_x(0,y)}{G^c(y)}\nu(t+dy)\\
&=\int_{a+t}^{b+t} \frac{\bar{\mfI}_x(0,z-t)}{G^c(z-t)}\nu(dz),\\
\frac{d}{dt}\int_a^b G^c(t-s)\bar{\mfI}_x(s,0)ds&=\int_a^b \bar{\mfI}_x(s,0)\nu(t-ds)\\
&=\int_{t-b}^{t-a} \bar{\mfI}_x(t-r,0)\nu(dr)\,.
\end{align*}
As a consequence, the above modification in the proof of Proposition \ref{prop-PDE} yields
\begin{align*}
\bar{\mfI}_t(t,x)+\bar{\mfI}_x(t,x)=\bar{\mfI}_x(t,0)-\int_t^{x\vee t}\frac{\bar{\mfI}_x(0,z-t)}{G^c(z-t)}\nu(dz)
-\int_0^{x\wedge t}\bar{\mfI}_x(t-r,0)\nu(dr)\,.
\end{align*}
Differentiating with respect to $x$ finally yields
\begin{align*}
\frac{\partial\bar{\mfi}(t,x)}{\partial t}+\frac{\partial\bar{\mfi}(t,x)}{\partial x}=-{\bf1}_{x\ge t}\frac{\bar{\mfi}(0,x-t)}{G^c(x-t)}\nu(dx)-{\bf1}_{x<t}\bar{\mfi}(t-x,0)\nu(dx)\,.
\end{align*}
We thus deduce that for $x\ge t$, $0\le s\le t$,
\[ \frac{\partial \bar{\mfi}}{\partial  s}(s, x-t+s)=-\frac{\bar{\mfi}(0,x-t)}{G^c(x-t)}\nu(x-t+ds),\]
while for $t>x$, $0\le s\le x$,
\[ \frac{\partial \bar{\mfi}}{\partial  s}(t-x+s,s)=-\bar{\mfi}(t-x,0)\nu(ds)\,.\]
Let us integrate the first identity on the interval $[0,t)$. 
We get
\begin{align*}
\bar{\mfi}(t,x)&=\bar{\mfi}(0,x-t)\left(1-\frac{\int_{[0,t)}\nu(x-t+ds)}{G^c(x-t)}\right)\\
&=\bar{\mfi}(0,x-t)\left(1-\frac{G(x)-G(x-t)}{G^c(x-t)}\right)\\
&=\frac{G^c(x)}{G^c(x-t)}\bar{\mfi}(0,x-t)\,.
\end{align*}
We conclude that for $x\ge t$,
\[\frac{\bar{\mfi}(0,x-t)}{G^c(x-t)}\nu(dx)=\frac{\bar{\mfi}(t,x)}{G^c(x)}\nu(dx)\,.\]
We finally consider the case $t>x$, and integrate the second identity on the interval $[0,x)$, yielding:
\begin{align*}
\bar{\mfi}(t,x)&=\bar{\mfi}(t-x,0)\left(1-\int_{[0,x)}\nu(ds)\right)\\
&=G^c(x)\bar{\mfi}(t-x,0),
\end{align*}
so that, for $t>x$, 
\[\bar{\mfi}(t-x,0)\nu(dx)=\frac{\bar{\mfi}(t,x)}{G^c(x)}\nu(dx),\]
and we have established \eqref{ident1-m}, \eqref{ident3-m}, as well as \eqref{eqn-barI-density-PDE-mu-m}.
The rest of the proof is the same as that of Proposition \ref{prop-PDE}.
\end{proof} 

The case of a deterministic duration $\eta$ is a particular case of the last Proposition.
\begin{coro} \label{prop-PDE-det}
Suppose that the infectious periods are deterministic and equal to $t_i$, i.e., $F(t)=\bone_{t\ge t_i}$,
$G(t)=\bone_{t>t_i}$.
Then we have
\begin{align} \label{eqn-barI-density-PDE-det}
\frac{\partial \bar{\mfi}(t,x)}{\partial t} + \frac{\partial \bar{\mfi}(t,x)}{\partial x}  
&=  -\delta_{t_i}(x) \bar{\mfi}(t, x)\,, 
\end{align}
with  $\delta_{t_i}(x) $ being the Dirac measure at $t_i$, 
 with the initial condition 
$\bar{\mfi}(0,x) =\partial_x\bar{\mfI}(0,x)$ for $x \in [0,t_i]$, and the boundary condition 
\begin{equation}\label{BC-ti}
\bar{\mfi}(t,0)=\bar{S}(t)\int_0^{(t+t_i)\wedge t_i} \bar{\lambda}(x) \bar{\mfi}(t,x)dx\,,
\end{equation}
Note also that 
the boundary 
function $\bar{\mfi}(t,0)$ solves the following Volterra equation: if $0<t<t_i$, 
\begin{equation}\label{ident2-det} \bar{\mfi}(t,0)=\left(\bar{S}(0)-\int_0^t\bar{\mfi}(s,0)ds\right)\left(\int_t^{t_i}\bar{\lambda}(y)\bar{\mfi}(0,y-t)dy+\int_0^t\bar{\lambda}(t-s)\bar{\mfi}(s,0)ds\right),\end{equation}
and if $t\ge t_i$,
\begin{equation} \label{ident2-det2}
\bar{\mfi}(t,0)=\left(\bar{S}(0)-\int_0^t\bar{\mfi}(s,0)ds\right)\int_0^{t_i}\bar{\lambda}(y)\bar{\mfi}(t-y,0)dy\,,\end{equation}

The PDE \eqref{eqn-barI-density-PDE-det} has a unique solution $\bar{\mfi}(t,x)$, 
which is given as follows. $\bar{\mfi}(t,x)=0$ if $x\ge t_i$. For $t\le x<t_i$, 
\begin{equation} \label{eqn-barI-PDE-det-sol-1}
 \bar{\mfi}(t,x)=\bar{\mfi}(0,x-t)\,,\end{equation}
while for $x<t\wedge t_i$,
\begin{equation}\label{eqn-barI-PDE-det-sol-2}\bar{\mfi}(t,x)= \bar{\mfi}(t-x,0).\end{equation} 
\end{coro}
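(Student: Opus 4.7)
The plan is to deduce Corollary \ref{prop-PDE-det} directly as a specialization of Proposition \ref{prop-PDE-measure} to the degenerate distribution $\nu = \delta_{t_i}$. With this choice, the left--continuous survival function becomes $G^c(x) = \mathbf{1}_{x \le t_i}$, so that $G^c(t_i) = 1$ while $G^c(x) = 0$ for $x > t_i$. The identity $\frac{\bar{\mfi}(t,x)}{G^c(x)} \nu(dx)$ appearing in \eqref{eqn-barI-density-PDE-mu-m} is then simply $\bar{\mfi}(t,t_i)\,\delta_{t_i}(dx)$, which by the convention on the integrand immediately yields \eqref{eqn-barI-density-PDE-det}.

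For the boundary condition, I would plug $G^c(x) = \mathbf{1}_{x \le t_i}$ and $G^c(x-t) = \mathbf{1}_{x-t \le t_i}$ (with the convention $G^c \equiv 1$ on $\R_-$) into \eqref{BC-m}. On $[0,t_i]$ we have $G^c(x) = 1$ and $G^c(x-t) = 1$ for all $t \ge 0$, so the ratio $G^c(x)/G^c(x-t)$ equals $1$; for $x > t_i$ the integrand is declared zero. This yields \eqref{BC-ti}, after noting $(t+\bar{x})\wedge t_i = t_i$ once we observe that $\bar{x} \le t_i$ (initially infected individuals must have infection age less than $t_i$ since otherwise they would already have recovered). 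Similarly, specializing \eqref{ident1-m}: for $x \ge t$, $G^c(x)/G^c(x-t)$ equals $1$ when $x < t_i$ and $0$ when $x \ge t_i$, giving \eqref{eqn-barI-PDE-det-sol-1} and the vanishing for $x \ge t_i$; and \eqref{ident3-m} gives $\bar{\mfi}(t,x) = \bar{\mfi}(t-x,0)$ for $x < t\wedge t_i$, which is \eqref{eqn-barI-PDE-det-sol-2}.

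Finally, to obtain the Volterra equations \eqref{ident2-det} and \eqref{ident2-det2}, I would substitute the explicit representations \eqref{eqn-barI-PDE-det-sol-1}--\eqref{eqn-barI-PDE-det-sol-2} into the right--hand side of \eqref{BC-ti} and split the integral at $x = t \wedge t_i$. When $0 < t < t_i$,
\begin{align*}
\int_0^{t_i}\bar{\lambda}(x)\bar{\mfi}(t,x)\,dx
&= \int_0^{t}\bar{\lambda}(x)\bar{\mfi}(t-x,0)\,dx + \int_{t}^{t_i}\bar{\lambda}(x)\bar{\mfi}(0,x-t)\,dx,
\end{align*}
and the change of variable $s=t-x$ in the first integral, combined with $\bar{S}(t) = \bar{S}(0)-\int_0^t \bar{\mfi}(s,0)\,ds$ from \eqref{eqn-barS-der-m}, produces \eqref{ident2-det}. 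When $t \ge t_i$, only the first piece survives, and a similar substitution gives \eqref{ident2-det2}. No step presents a real obstacle once the conventions on the left--continuous survival function $G^c$ on $\R_-$ and at the atom $t_i$ are handled carefully; the only point deserving attention is the bookkeeping at $x = t_i$, where $\nu(\{t_i\}) = 1$ but $G^c(t_i) = 1$, which is precisely the reason Proposition \ref{prop-PDE-measure} is stated in terms of $G^c$ rather than $F^c$ (see Remark \ref{re:leftcont}).
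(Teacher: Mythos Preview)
Your proposal is correct and takes exactly the approach the paper intends: the paper presents this corollary as ``a particular case of the last Proposition'' (Proposition \ref{prop-PDE-measure}) with no further proof, and you have carefully filled in the specialization $\nu=\delta_{t_i}$, $G^c(x)=\mathbf{1}_{x\le t_i}$. One minor quibble: your claim that $(t+\bar{x})\wedge t_i=t_i$ need not hold for small $t$, but this is harmless since $\bar{\mfi}(t,x)=0$ for $x>t+\bar{x}$ (and also $\bar\lambda(x)=0$ for $x\ge t_i$), so the effective upper limit is $t_i$ regardless.
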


\begin{remark} \label{rem-finalsize-SIR}
The total fraction of the population infected during the epidemic is given by
\begin{align*}
\Phi= \int_0^\infty \bar\mfi(t,0)dt 
\end{align*}
where $\bar\mfi(t,0)$ is the solution to \eqref{ident2}. 
We also refer the reader to equation (12) in Kaplan \cite{Kaplan2020}, based on his constructed ``Scratch" model. 
\end{remark}

\section{On the SIS model with infection-age dependent infectivity} \label{sec-SIS}

In the SIS model, the infectious individuals become susceptible once they recover. Since $S^N(t) + I^N(t) = N$ for each $t\ge 0$ with a population size $N$, the epidemic dynamics is determined by the process $I^N(t)$ alone, and we have the same representations of the processes $\mfI^N_0(t,x)$ and $\mfI^N_1(t,x)$ in \eqref{eqn-In0-rep} and \eqref{eqn-In1-rep}, respectively, while in the representations of $A^N$ in \eqref{eqn-An-PRM} and $\Upsilon^N$ in \eqref{eqn-Upsilon}, the process $S^N(t)$ is replaced by $S^N(t) = N - I^N(t)$.  The aggregate infectivity process $\sI^N(t)$ is still given by \eqref{eqn-cI-n}. 
The two processes $(\mfI^N, \sI^N)$ determine the dynamics of the SIS epidemic model. 
Under Assumptions \ref{AS-FLLN-Initial} and \ref{AS-lambda}, 
\begin{align}
(\overline{\sI}^N, \bar{\mfI}^{N}) \to (\overline{\sI},\bar{\mfI}) \ \text{in probability, locally uniformly in $t$ and $x$}, \qasq N \to\infty,
\end{align}
where 
\begin{align} 
 \overline{\sI}(t) 
  &= \int_0^{\bar{x}} \bar\lambda(y+t)  \bar{\mfI}(0,dy)  + \int_0^t  \bar\lambda(t-s)  \big(1- \bar{\mfI}(s,\infty) \big) \bar{\sI}(s)ds\,,\label{eqn-overline-sI-SIS} \\
\bar{\mfI}(t,x) 
&=  \int_0^{(x-t)^+} \frac{F^c(t+y)}{F^c(y)}  \bar{\mfI}(0, dy)   +  \int_{(t-x)^+}^t F^c(t-s) \big(1- \bar{\mfI}(s,\infty) \big) \bar{\sI}(s) ds\,,\label{eqn-barI-SIS} 
\end{align}
for $t,x \ge 0$. 
 If $\mfI(0,x)$ is differentiable and $F$ is absolutely continuous, then 
the density function 
 $\bar{\mfi}(t,x)= \frac{\partial \bar{\mfI}(t,x)}{\partial x}$ exists and satisfies again \eqref{eqn-barI-density-PDE-mu}.
The same calculations as in the case of the SIR model lead to \eqref{ident1}, \eqref{ident3} 
and \eqref{BC}.
However, the formula for $\bar{S}(t)$ is different in the case of the SIS model, that is, \eqref{eqn-barS-der} does not hold. 
Instead, we have 
\begin{align}\label{eqn-barS-SIS}
\bar{S}(t)&=1-\bar{I}(t)
=1-\int_0^{\bar{x}}\frac{F^c(t+y)}{F^c(y)}\bar{\mfi}(0,y)dy-\int_0^tF^c(t-s)\bar{\mfi}(s,0)ds\,.
\end{align}
Thus,  the Volterra equation on the boundary reads 
\begin{equation}\label{VoltboundSIS}
\begin{split}
\bar{\mfi}(t,0)&=\left(\int_0^{\bar{x}}\bar{\lambda}(t+y)\bar{\mfi}(0,y)dy
+\int_0^t\bar{\lambda}(t-s)\bar{\mfi}(s,0)ds\right)\\ &\quad\times
\left(1-\int_0^{\bar{x}}\frac{F^c(t+y)}{F^c(y)}\bar{\mfi}(0,y)dy-\int_0^tF^c(t-s)\bar{\mfi}(s,0)ds\right)\,,
\end{split}
\end{equation}
whose form is similar to the one for the SIR model.

It is also clear that if the c.d.f. $ F(t) = 1- e^{-\beta t}$, we have the same PDE for  $\bar{\mfi}(t,x)$ as given in \eqref{eqn-barI-density-PDE-exponential} 
with $\mu(x) = \beta$ and the boundary condition:  
\begin{equation*}
\begin{split}
\bar{\mfi}(t,0)&=\left(\int_0^{\bar{x}}\bar{\lambda}(t+y)e^{-\beta y}\, \bar{\mfi}(0,y)dy
+\int_0^t\bar{\lambda}(t-s)e^{-\beta (t-s)}\, \bar{\mfi}(s,0)ds\right)\\ &\quad\times
\left(1-\int_0^{\bar{x}} e^{-\beta t}\, \bar{\mfi}(0,y)dy-\int_0^t  e^{-\beta (t-s)}  \, \bar{\mfi}(s,0)ds\right)\,.
\end{split}
\end{equation*}

If the c.d.f. $F$ of the infectious period is not absolutely continuous, but $\mfI(0,x)$ is differentiable, then we have essentially the same result as in Proposition \ref{prop-PDE-measure}, except that  \eqref{eqn-barS-der-m} is replaced by \eqref{eqn-barS-SIS},
and \eqref{ident2-m} by \eqref{VoltboundSIS}.

Recall that the standard SIS model has a nontrivial equilibrium point $\bar{I}^*= 1-\beta/\lambda$ if $\beta<\lambda$, where  $\lambda$ is the infection rate (the bar over $\lambda$ is dropped for convenience),  and $1/\beta$ is the mean of the infectious periods. See Section 4.3 in \cite{PP-2020} for the account of the SIS model with general infectious periods. Here we consider the model in the generality of infection-age dependent infectivity.
Note that we provide the explicit expressions for the equilibria below assuming they exist. We do not prove the existence of the limit of  $ \bar{\mfI}(t,x)$ as $t\to\infty$, which we leave  as future work. 

\begin{prop}
Suppose that $ \lim \bar{\mfI}(t,x) \to \bar{\mfI}^\ast(x)$ exists as $t\to\infty$ and $ \bar{I}^\ast =\bar{\mfI}^\ast(\infty)$.
If $R_0=\int_0^\infty \bar\lambda(y)dy\le1$,  $\bar{I}^\ast=0$ (the disease free equilibrium). In the complementary case, $R_0=\int_0^\infty \bar\lambda(y)dy>1$, if $\bar{\mfI}(0,\bar{x})>0$,
\begin{equation} \label{eqn-barI*-formula}
 \bar{I}^\ast=1-\left(\int_0^\infty \bar\lambda(y)dy\right)^{-1} =1-\frac{1}{R_0}\,.
 \end{equation}
The density function $\bar{\mfi}(t,x)$ has an equilibrium $ \bar{\mfi}^*(x)$ in the age of infection $x$, given by
\begin{align} \label{eqn-barI*-x-formula}
 \bar{\mfi}^*(x) =  \frac{d \bar{\mfI}^*(x)}{d x} =  \bar{I}^*  \beta F^c(x),
\end{align}
where $\beta^{-1}=\int_0^\infty F^c(t)dt \in (0,\infty)$ is the expectation of the duration of the infectious period. 
If $F$ has a density $f$, then the equilibrium density $ \bar{\mfi}^*(x)$ satisfies 
\begin{align*}
 \frac{d \, \bar{\mfi}^*(x)}{d x} = -  \bar{I}^*  \beta f(x), \quad  \bar{\mfi}^*(0)=  \bar{I}^*  \beta. 
\end{align*}
\end{prop}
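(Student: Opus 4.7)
The plan is to pass to the limit $t\to\infty$ in the identities already derived for the SIS model, namely \eqref{ident3} (whose validity in the SIS case is asserted just after \eqref{eqn-barS-SIS}) and the Volterra equation \eqref{VoltboundSIS} on the boundary. Write $c:=\bar{\mfi}^*(0)$ for the candidate equilibrium boundary value.

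First I would establish the profile in $x$. The identity \eqref{ident3} reads $\bar{\mfi}(t,x)=F^c(x)\bar{\mfi}(t-x,0)$ for $t>x$. Letting $t\to\infty$ with $x$ fixed, the convergence assumption gives $\bar{\mfi}(t-x,0)\to c$ and hence $\bar{\mfi}^*(x)=c\,F^c(x)$. Integrating in $x$ and using $\int_0^\infty F^c(x)\,dx=\beta^{-1}$ produces $\bar{I}^*=c/\beta$, i.e.\ $c=\bar{I}^*\beta$, which is formula \eqref{eqn-barI*-x-formula}.

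Next I would take $t\to\infty$ in \eqref{VoltboundSIS}. The two initial‐data integrals $\int_0^{\bar{x}}\bar{\lambda}(t+y)\bar{\mfi}(0,y)\,dy$ and $\int_0^{\bar{x}}\frac{F^c(t+y)}{F^c(y)}\bar{\mfi}(0,y)\,dy$ tend to $0$: since $\lambda(t)=0$ on $\{t\ge\eta\}$ we have $\bar{\lambda}(t)\le\lambda^*F^c(t)$, and $\beta^{-1}=\int F^c<\infty$ is assumed, so $\bar{\lambda}\in L^1$ and $\bar{\lambda}(t+y),F^c(t+y)\to0$, while $\bar{\mfi}(0,\cdot)$ is integrable on $[0,\bar{x}]$. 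For the two convolution terms, $\bar{\mfi}(s,0)=\bar{\Upsilon}(s)=\bar{S}(s)\overline{\sI}(s)$ is uniformly bounded by $\lambda^*$ (since $\bar{S}\le 1$ and $\overline{\sI}\le\lambda^*$ by Assumption \ref{AS-lambda}) and converges to $c$, so dominated convergence yields
\[
\int_0^t\bar{\lambda}(t-s)\bar{\mfi}(s,0)\,ds \longrightarrow c\int_0^\infty\bar{\lambda}(u)du=cR_0,\qquad
\int_0^t F^c(t-s)\bar{\mfi}(s,0)\,ds \longrightarrow c\beta^{-1}=\bar{I}^*.
\]
Passing to the limit in \eqref{VoltboundSIS} gives the scalar equation
\[ c=cR_0(1-\bar{I}^*).\]

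Finally I would carry out the case analysis. The equation has two non‐negative roots: $c=0$ (giving the disease‐free value $\bar{I}^*=0$) and, provided $R_0>1$, the value $\bar{I}^*=1-1/R_0$ with $c=\bar{I}^*\beta>0$. When $R_0\le 1$ the only admissible root is $c=0$, since $1-1/R_0\le 0$ is not feasible. When $R_0>1$ and $\bar{\mfI}(0,\bar{x})>0$, the non‐trivial root yields \eqref{eqn-barI*-formula}; the positivity hypothesis on the initial data is what rules out the (unstable) trivial equilibrium in this regime, a stability analysis beyond the present characterization. The formulas for $\bar{\mfi}^*$ in the density case follow by differentiation of $\bar{\mfi}^*(x)=\bar{I}^*\beta F^c(x)$, with $F^c(0)=1$ giving $\bar{\mfi}^*(0)=\bar{I}^*\beta$. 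The main obstacle is justifying the dominated convergence in the two convolution limits; once the uniform bound $\bar{\mfi}(\cdot,0)\le\lambda^*$ and the integrability of $\bar{\lambda}$ and $F^c$ are in hand, the remainder is algebra.
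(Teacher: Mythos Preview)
Your argument is correct and follows the same overall strategy as the paper---pass to the $t\to\infty$ limit in the governing equations, obtain an algebraic relation for the equilibrium, and solve---but you carry it out at the level of the density $\bar{\mfi}$ via \eqref{ident3} and the boundary Volterra equation \eqref{VoltboundSIS}, whereas the paper works at the level of the cumulative function $\bar{\mfI}$, combining \eqref{eqn-barI-SIS} with the formula \eqref{eqFormula} for $\overline{\sI}$ to obtain first $\bar{\mfI}^\ast(x)=\bar{I}^\ast F_e(x)$ and then the same scalar equation $\bar{I}^\ast=(1-\bar{I}^\ast)\bar{I}^\ast R_0$. The paper's route has the mild advantage of not invoking the density-level identity \eqref{ident3} (hence not implicitly assuming the absolute continuity needed for that formula), while yours is arguably more direct once one is willing to work with $\bar{\mfi}$; in particular your step $\bar{\mfi}(t-x,0)\to c$ presupposes convergence of the boundary density, which is a slightly stronger hypothesis than the stated convergence of $\bar{\mfI}(t,\cdot)$. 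For the dichotomy in $R_0$, the paper does not argue stability either but simply invokes the branching-process approximation of the early phase (citing \cite{britton2018stochastic}), which you may wish to reference in place of your stability remark.
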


\begin{proof}
The fact that $ \bar{I}^\ast =0$ if $R_0\le1$ and $>0$ if $R_0>1$ follows from 
branching process arguments, and the fact that the start of the epidemic can be approximated by a branching process, see e.g.  section 1.3 in \cite{britton2018stochastic}.
Assume that the equilibrium $\bar{\mfI}^*(x) := \bar{\mfI}(\infty,x)$ exists. 
We deduce from \eqref{eqn-barI-SIS}, combined with \eqref{eqFormula}, 
that $\bar{\mfI}^*(x)$ must satisfy
\begin{align*}
\bar\mfI^\ast(x)&=(1-\bar\mfI^\ast(\infty))\int_0^xF^c(u)du\int_0^\infty\frac{\bar\lambda(y)}{F^c(y)}\bar\mfI^\ast(dy)
\\
&=(1-\bar{I}^\ast)\beta^{-1}F_e(x)\int_0^\infty\frac{\bar\lambda(y)}{F^c(y)} \bar\mfI^\ast(dy),
\end{align*}
where $F_e(x)=\beta \int_0^x F^c(s) ds$, the equilibrium (stationary excess) distribution.  Letting $x\to\infty$ in this formula, we deduce
\begin{equation}\label{eqn-barI*}\bar{I}^\ast=(1-\bar{I}^\ast)\beta^{-1}\int_0^\infty\frac{\bar\lambda(y)}{F^c(y)}\bar\mfI^\ast(dy)\,.\end{equation}
Combining the last two equations, we obtain
\begin{equation}\label{eqn-barmfI*} 
\bar\mfI^\ast(x)=\bar{I}^\ast F_e(x)\,.
\end{equation}
Plugging this formula in the previous identity, we deduce that
\[ \bar{I}^\ast=(1-\bar{I}^\ast)\bar{I}^\ast\int_0^\infty \bar\lambda(y)dy\,.\]
Then the formula \eqref{eqn-barI*-formula} can be directly deduced from this equation. 
The formula \eqref{eqn-barI*-x-formula} follows by  taking the derivative with respect to $x$ in \eqref{eqn-barmfI*}.
\end{proof}

\begin{remark}
If the distribution $F$ is exponential, that is, $F(x) = 1-e^{-\beta x}$, then 
we obtain
 $$\bar{\mfI}^*(x)  =  \bar{I}^* (1-e^{-\beta x}), \quad   \bar{\mfi}^*(x) =  \bar{I}^*  \beta e^{-\beta x}, \qandq  \frac{d\, \bar{\mfi}^*(x)}{d x} = -  \bar{I}^*  \beta^2e^{-\beta x}= -\beta \bar{\mfi}^*(x),
 $$
 where $\bar{I}^*$ is given in \eqref{eqn-barI*-formula}. 
 \end{remark}

 \begin{remark} \label{rem-indep-SIS}
 Suppose that $\lambda_i(t)=\lambda(t)\bone_{t<\eta_i}$, where $\lambda(t)$ is a deterministic function, as in Remark \ref{rem-lambda-indep}. Then $\bar\lambda(t) = \lambda(t) F^c(t)$. If $\lambda(t) \equiv \lambda$ is a constant and $F$ has mean $\beta^{-1}$, then $\bar{I}^\ast$ in \eqref{eqn-barI*-formula}
 \begin{equation} \label{eqn-barI*-formula-cons}
 \bar{I}^\ast=1-\left( \lambda\int_0^\infty F^c(y)dy\right)^{-1} = 1- \beta/\lambda =1-\frac{1}{R_0} \,,
 \end{equation}
 which reduces to the well known result for the standard SIS model with constant rates, assuming $\beta<\lambda$. 
 \end{remark}

\section{Proof of the FLLN} \label{sec-proof-SIR}

In this section, we prove Theorem \ref{thm-FLLN}. We will need the following theorem. A similar pre-tightness criterion can be found in Theorem 3.5.1 in Chapter 6 of \cite{khoshnevisan2002multiparameter}, which extends that in the Corollary on page 83 of \cite{billingsley1999convergence} to the space $C([0,1]^k, \RR)$. Those proofs can be easily extended to the space $D_D$. For the convenience of the reader, we give a proof of the following result in Section \ref{sec:App} below.

\begin{theorem} \label{thm-DD-conv0}
Let $\{X^N: N \ge 1\}$ be a sequence of random elements in $D_D$.
If the following two conditions are satisfied: for any $T,S>0$, 
\begin{itemize}
\item[(i)] for any $\ep>0$, $  \sup_{t \in [0,T]}\sup_{s\in [0,S]} \P \big(  |X^N(t, s)|> \ep \big) \to 0$ as $N\to\infty$, and
\item[(ii)] for any $\ep>0$, as $\delta\to0$,
\begin{align*} 
& \limsup_{N\to\infty}  \sup_{t\in [0,T]} \frac{1}{\delta}  \P \bigg(  \sup_{u \in [0,\delta]}\sup_{s \in [0,S]} |X^N(t+u,s) - X^N(t,s)| > \ep\bigg) \to 0, \\
& \limsup_{N\to\infty}  \sup_{s\in [0,S]} \frac{1}{\delta}  \P \bigg(  \sup_{v \in [0,\delta]}\sup_{t \in [0,T]} |X^N(t,s+v) - X^N(t,s)| > \ep\bigg) \to 0, 
\end{align*}
\end{itemize}
then  $X^N(t,s)\to 0 $ in probability, locally uniformly in $t$ and $s$, as $N\to\infty$. 
\end{theorem}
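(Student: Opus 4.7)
I would extend the classical one-parameter tightness argument (e.g.\ Theorem~7.4 in \cite{billingsley1999convergence}) to the two-parameter setting by a grid discretization combined with a triangle-inequality decomposition. Fix $T,S,\epsilon > 0$; it suffices to show
\[ \P\!\left(\sup_{t \in [0,T],\, s \in [0,S]} |X^N(t,s)| > 3\epsilon\right) \longrightarrow 0 \qasq N \to \infty. \]
For $\delta>0$ (to be chosen later), introduce the grids $t_k := k\delta$, $0 \le k \le K-1$, and $s_j := j\delta$, $0 \le j \le J-1$, where $K := \lceil T/\delta \rceil$ and $J := \lceil S/\delta \rceil$; every $t \in [0,T]$ then admits a representation $t = t_k + u$ with $u \in [0,\delta]$, and similarly for $s$.

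Writing
\begin{align*}
|X^N(t_k+u, s_j+v)| &\le |X^N(t_k, s_j)| + |X^N(t_k+u, s_j+v) - X^N(t_k, s_j+v)| \\
&\quad + |X^N(t_k, s_j+v) - X^N(t_k, s_j)|
\end{align*}
(first move in $t$ at height $s_j+v$, then in $s$ at time $t_k$) and taking suprema yields $\sup_{t,s}|X^N(t,s)| \le A^N_\delta + B^N_\delta + C^N_\delta$, where $A^N_\delta := \max_{k,j} |X^N(t_k, s_j)|$, $B^N_\delta := \max_k \sup_{u \in [0,\delta],\, s' \in [0,S]} |X^N(t_k+u, s') - X^N(t_k, s')|$, and $C^N_\delta$ is the symmetric $s$-analog. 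A union bound controls each piece: condition (i) gives $\P(A^N_\delta > \epsilon) \le K J \sup_{t,s} \P(|X^N(t,s)| > \epsilon) \to 0$ for any \emph{fixed} $\delta$ as $N \to \infty$; condition (ii) gives
\[ \limsup_{N \to \infty} \P(B^N_\delta > \epsilon) \le K \cdot \delta \cdot \rho(\delta) \le (T+\delta)\,\rho(\delta), \]
where $\rho(\delta) := \limsup_N \sup_{t \in [0,T]} \delta^{-1} \P(\sup_{u,s'} |X^N(t+u,s') - X^N(t,s')| > \epsilon)$, which by assumption tends to $0$ as $\delta \to 0$; an identical estimate controls $C^N_\delta$.

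Given $\eta > 0$, I would first choose $\delta$ small enough that the $B^N_\delta$ and $C^N_\delta$ contributions each have $N$-limsup below $\eta/3$; then, with $\delta$ \emph{frozen}, condition (i) forces $\P(A^N_\delta > \epsilon) \to 0$ as $N \to \infty$. Combining the three gives $\limsup_N \P(\sup_{t,s}|X^N(t,s)| > 3\epsilon) \le \eta$, and since $\eta, \epsilon$ are arbitrary the result follows. The only real delicate point --- which I expect to be more a matter of care than a true obstacle --- is the ordering of the limits: condition (ii) demands that we first send $\delta \to 0$ so as to exploit the prefactor $\delta^{-1}$, which inflates the grid size $KJ$ used in the union bound for $A^N_\delta$; the resolution is that (i) is a purely pointwise-in-$(t,s)$ hypothesis, so once $\delta$ is frozen the number of grid points is finite and fixed, and (i) drives $A^N_\delta$ to zero in probability. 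The same scheme, applied verbatim, will also yield the ``pre-tightness'' version stated in Theorem~3.5.1 of \cite{khoshnevisan2002multiparameter} and in the Corollary on p.~83 of \cite{billingsley1999convergence}.
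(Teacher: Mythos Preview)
Your proposal is correct and follows essentially the same approach as the paper: a $\delta$-grid discretization of $[0,T]\times[0,S]$, a triangle-inequality decomposition into grid-point values plus increments in each variable, and a union bound that trades the $O(1/\delta)$ grid cardinality against the $\delta^{-1}$ prefactor in condition~(ii). The paper organizes the argument slightly differently---it first proves that the two-parameter modulus of continuity vanishes (via a six-term telescoping through nearest grid points) and then invokes~(i), whereas you bound $\sup_{t,s}|X^N(t,s)|$ directly with three terms---but the underlying mechanism and the handling of the order of limits (freeze $\delta$, then send $N\to\infty$ for the grid-point term) are identical.
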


We shall also use repeatedly the following Lemma.
\begin{lemma}\label{le:Portmanteau}
Let $f\in D(\R_+)$ and $\{g_N\}_{N\ge1}$ be a sequence of elements of $D_\uparrow(\R_+)$ which is such that
$g_N\to g$ locally uniformly, where $g\in C_\uparrow(\R_+)$. Then for any $T>0$,
\[ \int_{[0,T]} f(t) g_N(dt)\to \int_{[0,T]} f(t)g(dt)\, .\]
\end{lemma}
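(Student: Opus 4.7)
The plan is a standard Helly--Bray / Portmanteau-type argument: approximate the c\`adl\`ag integrand $f$ by a step function adapted to a partition made of its continuity points, and then pass to the limit using the local uniform convergence of $g_N$ together with the fact that continuity of $g$ makes the Stieltjes measure $g(dt)$ atomless. The subtlety that must be treated with care is that the $g_N$'s themselves may carry atoms which happen not to survive in the limit.

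First I would fix $\varepsilon > 0$ and invoke the standard c\`adl\`ag regularity fact: any $f \in D([0,T])$ is bounded on $[0,T]$ and admits a finite partition $0 = t_0 < t_1 < \cdots < t_n = T$ such that $\sup_{s,t \in [t_i, t_{i+1})} |f(s) - f(t)| < \varepsilon$ for every $i$. Since $f$ has at most countably many discontinuities on $[0,T]$, the interior nodes $t_1, \dots, t_{n-1}$ can be perturbed by an arbitrarily small amount so as to lie at continuity points of $f$ without spoiling the oscillation bound. Setting
\[ \phi(t) := \sum_{i=0}^{n-1} f(t_i)\bone_{[t_i, t_{i+1})}(t) + f(T)\bone_{\{T\}}(t), \]
one has $\sup_{t \in [0,T]} |f(t) - \phi(t)| \le \varepsilon$.

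The key step is the identity $g_N(t^-) \to g(t)$ for every $t \in [0,T]$, which is where continuity of $g$ enters. Indeed, the sandwich $g_N(t - \delta) \le g_N(t^-) \le g_N(t)$, passage to the limit $N \to \infty$ using uniform convergence of $g_N$ to $g$ on $[0,T]$, and finally $\delta \downarrow 0$ using $g \in C$ yields the claim. Granted this, each summand $g_N([t_i, t_{i+1})) = g_N(t_{i+1}^-) - g_N(t_i^-)$ converges to $g([t_i, t_{i+1})) = g(t_{i+1}) - g(t_i)$, and $g_N(\{T\}) = g_N(T) - g_N(T^-) \to 0 = g(\{T\})$. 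Summing up gives $\int_{[0,T]} \phi(t)\, g_N(dt) \to \int_{[0,T]} \phi(t)\, g(dt)$.

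Finally, since the total masses satisfy $g_N([0,T]) \to g([0,T])$, both $\int_{[0,T]} (f - \phi)\, dg_N$ and $\int_{[0,T]} (f - \phi)\, dg$ are bounded in absolute value by $\varepsilon\bigl(g([0,T]) + 1\bigr)$ for all $N$ large enough. Combining this with the convergence of the step-function integrals and letting $\varepsilon \downarrow 0$ yields the lemma. The main obstacle throughout is really just the bookkeeping of left-/right-limits at the jumps of the $g_N$'s; atomlessness of $g$ is what forces this bookkeeping to collapse in the limit.
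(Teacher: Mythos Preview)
Your proof is correct. The paper takes a different, much shorter route: it simply observes that local uniform convergence of $g_N$ to the continuous $g$ implies weak convergence of the measures $g_N(dt)$ to $g(dt)$, and then invokes the extended Portmanteau theorem (convergence of integrals against bounded measurable functions whose discontinuity set is null for the limit measure). Your argument is essentially a self-contained proof of that special case via step-function approximation, exploiting directly that continuity of $g$ forces $g_N(t^-)\to g(t)$ at every $t$. The paper's approach is terser but relies on a black-box citation; yours is more elementary and self-contained, which is what one would want if the reader cannot be assumed to know the extended Portmanteau statement. One minor remark: your perturbation of the interior partition nodes $t_1,\dots,t_{n-1}$ to continuity points of $f$ is never actually used in the argument---the convergence $g_N([t_i,t_{i+1}))\to g([t_i,t_{i+1}))$ relies only on continuity of $g$, and the uniform bound $|f-\phi|\le\varepsilon$ holds regardless---so that sentence can safely be dropped.
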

\begin{proof}
The assumption implies that the sequence of measures $g_N(dt)$ converges weakly, as $N\to\infty$,
towards the measure $g(dt)$. Since moreover $f$ is bounded, and the set of discontinuities of $f$ is
of $g(dt)$ measure $0$,  this is essentially a minor improvement of the Portmanteau theorem, see 
\cite{billingsley1999convergence}.
\end{proof}

\subsection{Convergence of $\mfI^{N}_0(t,x) $}

We first treat the process $\mfI^{N}_0(t,x) $ in \eqref{eqn-In0-rep}. 

\begin{lemma} \label{lem-barIN0-conv}
Under Assumption \ref{AS-FLLN-Initial},
\begin{equation}
\bar{\mfI}^N_0(t,x) \to \bar{\mfI}_0(t,x) \qinq D_D \qasq N \to \infty,
\end{equation}
in probability, where the limit $\bar{\mfI}_0(t,x)$ is given by
\begin{equation} \label{eqn-bar-mfI-0-rep1}
\bar{\mfI}_0(t,x) :=   \int_0^{(x-t)^+} \frac{F^c(t+y)}{F^c(y)}  \bar{\mfI}(0, dy) , \quad t, x \ge 0.
\end{equation}
\end{lemma}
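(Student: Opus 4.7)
The plan is to decompose
\[\bar{\mfI}^N_0(t,x) \;=\; A^N(t,x) + M^N(t,x),\qquad A^N(t,x) := \int_0^{(x-t)^+} \frac{F^c(t+y)}{F^c(y)} \bar{\mfI}^N(0, dy),\]
where $A^N(t,x)$ is the conditional expectation of $\bar{\mfI}^N_0(t,x)$ given the initial ages $\{\tilde{\tau}_{j,0}^N\}_j$ (using the conditional law \eqref{enq-eta0-age} and the conditional independence of the $\eta_j^0$'s), and $M^N := \bar{\mfI}^N_0 - A^N$. Since the limit $\bar{\mfI}_0$ will turn out to be jointly continuous, it is enough to show that $A^N \to \bar{\mfI}_0$ and $M^N \to 0$ both hold locally uniformly in $(t,x)$ in probability; summing then yields the claimed $D_D$-convergence.

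\textbf{Convergence of $A^N$.} Apply Lemma \ref{le:Portmanteau} with $f(y) = \bone_{y \le (x-t)^+} F^c(t+y)/F^c(y)$ and $g_N(\cdot) = \bar{\mfI}^N(0,\cdot) \to g(\cdot) = \bar{\mfI}(0,\cdot)$ to obtain $A^N(t,x) \to \bar{\mfI}_0(t,x)$ in probability for each fixed $(t,x)$. The limit $\bar{\mfI}_0$ is jointly continuous in $(t,x)$ by continuity of $\bar{\mfI}(0,\cdot)$ together with dominated convergence. Crucially, $A^N(t,x)$ is nondecreasing in $x$ for fixed $t$ and nonincreasing in $t$ for fixed $x$, because both the domain of integration and the integrand are monotone in each coordinate. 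A two-parameter Dini argument, sandwiching $(t,x)$ between adjacent points of a fine grid and invoking separate monotonicity plus continuity of the limit, then promotes pointwise convergence in probability to local uniform convergence in probability.

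\textbf{Convergence of $M^N$ via Theorem \ref{thm-DD-conv0}.} Writing
\[M^N(t,x) \;=\; \frac{1}{N}\sum_{j=1}^{\mfI^N(0,(x-t)^+)}\xi_j(t),\qquad \xi_j(t) := \bone_{\eta_j^0 > t} - \frac{F^c(t+\tilde\tau_{j,0}^N)}{F^c(\tilde\tau_{j,0}^N)},\]
the $\xi_j(t)$'s are, conditional on the initial ages, independent, mean zero, and bounded by $1$. The unconditional variance bound $\E[M^N(t,x)^2] \le \E[I^N(0)]/N^2 \le C/N$ is uniform in $(t,x)$ on compacts, giving condition (i) of Theorem \ref{thm-DD-conv0} via Chebyshev. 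For condition (ii) I would decompose the time-increment $M^N(t+u,x) - M^N(t,x)$ as an \emph{indicator-flip} term $N^{-1}\sum_{j\le\mfI^N(0,(x-t-u)^+)}[\xi_j(t+u)-\xi_j(t)]$ plus a \emph{truncation} term $N^{-1}\sum_{\mfI^N(0,(x-t-u)^+)<j\le\mfI^N(0,(x-t)^+)}\xi_j(t)$. Their conditional variances are bounded respectively by $N^{-1}\int_0^{(x-t-u)^+}[F^c(t+y)-F^c(t+u+y)]/F^c(y)\,\bar{\mfI}^N(0,dy)$ and $N^{-1}[\bar{\mfI}^N(0,(x-t)^+) - \bar{\mfI}^N(0,(x-t-u)^+)]$, both vanishing uniformly on compacts as $u \to 0$ (using the modulus of $F^c$ and the uniform continuity of $\bar{\mfI}(0,\cdot)$, respectively). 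The $x$-increment is treated analogously.

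\textbf{Main obstacle.} The delicate point is upgrading these pointwise conditional second-moment bounds to the \emph{sup inside the probability} appearing in condition (ii) of Theorem \ref{thm-DD-conv0} (where one must control $\sup_{u\in[0,\delta]}\sup_{x\in[0,S]}|\cdot|$). This will be handled by evaluating on a fine grid in the non-supremized variable whose mesh is tuned to $\delta$, using monotonicity of $A^N$ and of $\bar{\mfI}^N(0,\cdot)$ to sandwich the sup on each cell by the jump across it (plus a small continuity error), and combining with Kolmogorov's maximal inequality for the conditionally centered sums of independent Bernoulli-type increments at the grid points.
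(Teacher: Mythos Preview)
Your decomposition $\bar{\mfI}^N_0 = A^N + M^N$ is exactly the paper's (there written $\widetilde{\mfI}^N_0 + V^N$), and your treatment of $A^N$ via a two-parameter Dini argument is a clean alternative to the paper's route, which instead verifies the increment condition of Theorem~\ref{thm-DD-conv0} for $\widetilde{\mfI}^N_0$ by direct computation. Your monotonicity observation ($A^N$ nonincreasing in $t$, nondecreasing in $x$) makes the Dini sandwich immediate and arguably shorter.

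For $M^N$, condition~(i) is handled identically in both proofs. The real divergence is at your ``main obstacle'', condition~(ii). Your plan (grid in the non-supremized variable, Kolmogorov's maximal inequality at grid points, monotonicity of $A^N$ and $\bar{\mfI}^N(0,\cdot)$ to control between grid points) can be made to work, but it is more elaborate than necessary. The paper bypasses the maximal inequality entirely by \emph{not} trying to control the sup of the centered process: it bounds the increment crudely via
\[
|M^N(t+u,x)-M^N(t,x)| \;\le\; \frac{1}{N}\sum_{j\le \mfI^N(0,(x-t-u)^+)}\!\!\bone_{t<\eta_j^0\le t+u} \;+\; \int_0^{(x-t-u)^+}\!\!\frac{F^c(t+y)-F^c(t+u+y)}{F^c(y)}\bar{\mfI}^N(0,dy) \;+\; \big|\bar{\mfI}^N(0,(x-t)^+)-\bar{\mfI}^N(0,(x-t-u)^+)\big|,
\]
i.e., it throws away the centering in $\xi_j(t+u)-\xi_j(t)$ and bounds the truncation term by its number of summands. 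All three right-hand terms are \emph{monotone in $u$ and in $x$}, so the double sup is simply the value at $u=\delta$, $x=T'$; the first term is then split once more into centered (variance $O(N^{-1})$) plus mean (small as $\delta\to 0$) parts. This monotone-upper-bound trick is the simplification you are missing; it removes any need for Kolmogorov or a grid.
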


\begin{proof}
Recall that 
\begin{align*}
\bar{\mfI}^{N}_0(t,x) = N^{-1}\sum_{j=1}^{I^N(0)} \bone_{\eta_j^0 > t} \bone_{\tilde{\tau}_{j,0}^N  \le (x-t)^+ } = N^{-1} \sum_{j=1}^{\mfI^N(0, (x-t)^+)} \bone_{\eta_j^0 > t}\,.
\end{align*}
Note that the pair of variables $(\tilde{\tau}_{j,0}^N, \eta^0_j)$ satisfies \eqref{enq-eta0-age}, and $\mfI^N(0, (x-t)^+) = \max\{j\ge1: \tilde{\tau}_{j,0}^N  \le (x-t)^+\}$.
Let
\begin{align} \label{eqn-wt-mfI-0}
\wt{\mfI}^{N}_0(t,x) = N^{-1}\sum_{j=1}^{\mfI^N(0, (x-t)^+)}  \frac{F^c(t+ \tilde{\tau}_{j,0}^N  )}{F^c(\tilde{\tau}_{j,0}^N  )}  = \int_0^{(x-t)^+} \frac{F^c(t+y)}{F^c(y)}  \bar\mfI^N(0, dy)  \,.
\end{align}

We will first show that $\wt{\mfI}^N_0(t,x) \to \bar{\mfI}_0(t,x)$ (this will be step 1 of the proof), and then that 
 $\bar{\mfI}^{N}_0(t,x) - \wt{\mfI}^N_0(t,x)\to0$ (this will be step 2 of the proof), both in probability, locally uniformly in $t$ and $x$, as $N\to\infty$.
 
{\bf Step 1} We show that, as $N\to\infty$, 
\begin{equation}\label{conv0tilde}
\wt{\mfI}^N_0(t,x) \to \bar{\mfI}_0(t,x) \ \text{ in probability, locally uniformly in $t$ and $x$}. 
\end{equation}
From Lemma \ref{le:Portmanteau}, Assumption \ref{AS-FLLN-Initial} and the continuous mapping theorem, we deduce that for any $t,x \ge0$,
$\wt{\mfI}^N_0(t,x) \to \bar{\mfI}_0(t,x)$ in probability, as $N\to\infty$. It thus remains to show that the sequence $\{X^N:=\wt{\mfI}^N_0-\bar{\mfI}_0,\ N\ge1\}$ satisfies condition (ii) in Theorem \ref{thm-DD-conv0}. 
In fact, it is easily seen that it is sufficient to verify condition (ii) with $X^N=\wt{\mfI}^N_0$. Indeed, both
\[ \sup_{0\le u\le \delta}\sup_{0\le x\le \bar{x}}|\bar{\mfI}_0(t+u,x)-\bar{\mfI}_0(t,x)|\ \text{ and }
\sup_{0\le v\le \delta}\sup_{0\le t\le T}|\bar{\mfI}_0(t,x+v)-\bar{\mfI}_0(t,x)|\]
tend to $0$, as $\delta\to0$, which is an easy consequence of the computations which  follow. Let us now consider $X^N=\wt{\mfI}^N_0$.
We have 
\begin{align*}
& \wt{\mfI}^N_0(t+u,x)-\wt{\mfI}^N_0(t,x) \\
&=\int_0^{(x-t-u)^+}\frac{F^c(t+u+y)}{F^c(y)}\bar\mfI^N(0, dy)
-\int_0^{(x-t)^+}\frac{F^c(t+y)}{F^c(y)}\bar\mfI^N(0, dy)\\
&=\int_0^{(x-t-u)^+}\frac{F^c(t+u+y)-F^c(t+y)}{F^c(y)}\bar\mfI^N(0, dy)-\int_{(x-t-u)^+}^{(x-t)^+}\frac{F^c(t+y)}{F^c(y)}\bar\mfI^N(0, dy)
\end{align*}
which gives 
\begin{align*}
& \big|\wt{\mfI}^N_0(t+u,x)-\wt{\mfI}^N_0(t,x)\big| \\
&\le \int_0^{(x-t-u)^+}\frac{F^c(t+y)-F^c(t+u+y)}{F^c(y)}\bar\mfI^N(0, dy)+\int_{(x-t-u)^+}^{(x-t)^+}\frac{F^c(t+y)}{F^c(y)}\bar\mfI^N(0, dy)\,.
\end{align*}
Consequently,
\begin{align*}
\sup_{0\le u\le\delta,0\le x\le\bar{x}}|\wt{\mfI}^N_0(t+u,x)-\wt{\mfI}^N_0(t,x)|&\le\int_0^{(\bar{x}-t)^+}
\frac{F^c(t+y)-F^c(t+\delta+y)}{F^c(y)}\bar\mfI^N(0, dy)\\
&\quad+ \sup_{0\le x\le\bar{x}}\int_{(x-t-\delta)^+}^{(x-t)^+} \frac{F^c(t+y)}{F^c(y)}\bar\mfI^N(0, dy)\,.
\end{align*}
The limit in probability of the first term on the right of the last inequality equals 
\[ \int_0^{(\bar{x}-t)^+}\frac{F^c(t+y)-F^c(t+\delta+y)}{F^c(y)}\bar\mfI(0, dy),\]
which tends to $0$ as $\delta\to0$, since $F^c$ is continuous on the right and the integrand is between $0$ and $1$.
The second term on the right of the above inequality is nonnegative and upper bounded by
\[ \sup_{0\le x\le\bar{x}}\left(\bar\mfI^N(0, (x-t)^+)-\bar\mfI^N(0, (x-t-\delta)^+)\right),\]
which converges in probability towards
\[ \sup_{0\le x\le\bar{x}}\left(\bar\mfI(0, (x-t)^+)-\bar\mfI(0, (x-t-\delta)^+)\right),\]
and this last expression tends to $0$ as $\delta\to0$. Combining the above arguments, we deduce that
for $\ep>0$, if $\delta>0$ is small enough,
\[\limsup_N\P\left(\sup_{0\le u\le\delta, 0\le x\le \bar{x}}\big|\wt{\mfI}^N_0(t+u,x)-\wt{\mfI}^N_0(t,x)\big|>\ep\right)=0\,.\]
We next consider
\begin{align*}
\wt{\mfI}^N_0(t,x+v)-\wt{\mfI}^N_0(t,x)&=\int_{(x-t)^+}^{(x+v-t)^+}\frac{F^c(t+y)}{F^c(y)}\bar\mfI^N(0, dy)\\
&\le \bar\mfI^N(0,(x+v-t)^+)-\mfI^N(0,(x-t)^+)\,. 
\end{align*}
Hence, 
\begin{align*}
\sup_{0\le v\le\delta,0\le t\le T}\big|\wt{\mfI}^N_0(t,x+v)-\wt{\mfI}^N_0(t,x)\big|=\sup_{0\le t\le T}\left(\bar\mfI^N(0,(x+\delta-t)^+)-\mfI^N(0,(x-t)^+)\right)\,.
\end{align*}
The term on the right of the last inequality converges in probability as $N\to\infty$, towards
\[ \sup_{0\le t\le T}\left(\bar\mfI(0,(x+\delta-t)^+)-\mfI(0,(x-t)^+)\right),\]
which tends to $0$ as $\delta$ tends to $0$. 
Again we easily deduce from these computations that for any $\ep>0$, if $\delta>0$ is small enough,
\[\limsup_N\P\left(\sup_{0\le v\le\delta,0\le t\le T}\big|\wt{\mfI}^N_0(t,x+v)-\wt{\mfI}^N_0(t,x)\big|>\ep\right)=0\,.\]
 We have established \eqref{conv0tilde}.

{\bf Step 2} We finally show that $V^N(t,x):= \bar{\mfI}^{N}_0(t,x) - \wt{\mfI}^{N}_0(t,x)$ satisfies the two conditions of  Theorem \ref{thm-DD-conv0}. 
We have
\begin{align*}
V^N(t,x) 
= N^{-1} \sum_{j=1}^{\mfI^N(0, (x-t)^+)} \left(\bone_{\eta_j^0 > t} - \frac{F^c(t+ \tilde{\tau}_{j,0}^N  )}{F^c(\tilde{\tau}_{j,0}^N  )}  \right).
\end{align*}

We first check condition (i) from Theorem \ref{thm-DD-conv0}. 
We have 
\begin{align*}
\E\big[V^N(t,x)^2\big] 
&= \E \Bigg[ N^{-2} \sum_{j=1}^{\mfI^N(0, (x-t)^+)} \left(\bone_{\eta_j^0 > t} - \frac{F^c(t+ \tilde{\tau}_{j,0}^N  )}{F^c(\tilde{\tau}_{j,0}^N  )}  \right)^2 \Bigg]  \\
& \quad + \E\Bigg[ N^{-2} \sum_{j, j'=1, \, j \neq j'}^{\mfI^N(0, (x-t)^+)} \left(\bone_{\eta_j^0 > t} - \frac{F^c(t+ \tilde{\tau}_{j,0}^N  )}{F^c(\tilde{\tau}_{j,0}^N  )}  \right)  \left(\bone_{\eta_{j'}^0 > t} - \frac{F^c(t+ \tilde{\tau}_{j',0}^N  )}{F^c(\tilde{\tau}_{j',0}^N  )}  \right)\Bigg]  \\
& = N^{-1} \E \Bigg[\int_0^{(x-t)^+}  \frac{F^c(t+ s)}{F^c(s)} \bigg( 1-  \frac{F^c(t+ s)}{F^c(s)}\bigg) \bar\mfI^N(0,ds) \Bigg]
\end{align*}
where the second term in the first equality is equal to zero by the independence of $\eta^0_j$ and $\eta^0_{j'}$ given the times $\tilde{\tau}_{j,0}^N$ and $\tilde{\tau}_{j',0}^N$ and by using a conditioning argument. 
This implies that as $N\to \infty$,  \[\sup_{t\ge0}\sup_{x\ge 0} \E\big[V^N(t,x)^2\big] \to 0,\] and thus condition (i) in Theorem \ref{thm-DD-conv0} holds. 

We next show condition (ii) from Theorem \ref{thm-DD-conv0}, that is, 
 for any $\ep>0$, as $\delta \to 0$, 
\begin{align} \label{eqn-Vn-diff-conv-1}
\limsup_N \sup_{t \in [0,T]} \frac{1}{\delta}\P \left( \sup_{u \in [0,\delta]} \sup_{x \in [0,T']} \big|V^N(t+u, x) - V^N(t,x) \big|>\ep\right) \to 0,
\end{align}
and
\begin{align}\label{eqn-Vn-diff-conv-2}
\limsup_N \sup_{x \in [0,T']} \frac{1}{\delta}\P \left( \sup_{v \in [0,\delta]} \sup_{t \in [0,T]} \big|V^N(t, x+v) - V^N(t,x) \big|>\ep\right) \to 0. 
\end{align}
We first prove \eqref{eqn-Vn-diff-conv-1}. 
We have 
\begin{align} \label{eqn-Vn-diff-conv-1-p1}
& \big|V^N(t+u, x) - V^N(t,x) \big|  \non\\
 &= \Bigg|N^{-1} \sum_{j=1}^{ {\mfI}^N(0, (x-t-u)^+)}\bigg(\bone_{\eta_j^0 > t+u} - \frac{F^c(t+u+ \tilde{\tau}_{j,0}^N  )}{F^c(\tilde{\tau}_{j,0}^N  )}  \bigg)    - N^{-1} \sum_{j=1}^{ {\mfI}^N(0, (x-t)^+)}\bigg(\bone_{\eta_j^0 > t} - \frac{F^c(t+ \tilde{\tau}_{j,0}^N  )}{F^c(\tilde{\tau}_{j,0}^N  )}  \bigg)   \Bigg| \non \\
& \le  \Bigg|N^{-1} \sum_{j=1}^{ {\mfI}^N(0, (x-t-u)^+)}\bigg(\bone_{t<\eta_j^0 \le t+u} - \frac{F^c(t+ \tilde{\tau}_{j,0}^N  ) - F^c(t+ u+\tilde{\tau}_{j,0}^N  )}{F^c(\tilde{\tau}_{j,0}^N  )}  \bigg)  \Bigg|  \non \\
 & \quad + \Bigg|N^{-1} \sum_{j= {\mfI}^N(0, (x-t-u)^+)+1}^{ {\mfI}^N(0, (x-t)^+)}\bigg(\bone_{\eta_j^0 > t} - \frac{F^c(t+ \tilde{\tau}_{j,0}^N  )}{F^c(\tilde{\tau}_{j,0}^N  )}  \bigg)  \Bigg|  \non \\
& \le  N^{-1} \sum_{j=1}^{ {\mfI}^N(0, (x-t-u)^+)}\bone_{t<\eta_j^0 \le t+u}  +  \int_0^{(x-t-u)^+}\frac{F^c(t+s  ) - F^c(t+u+s  )}{F^c(s )}  \bar{\mfI}^N(0,d s) \non \\
& \qquad +  \big|  \bar{\mfI}^N(0, (x-t)^+) - \bar{\mfI}^N(0, (x-t-u)^+) \big|\,. 
\end{align}
For the first term, 
\begin{align} \label{eqn-Vn-diff-conv-1-p2}
& \P \Bigg( \sup_{u \in [0,\delta]} \sup_{x \in [0,T']} N^{-1} \sum_{j=1}^{ {\mfI}^N(0, (x-t-u)^+)}\bone_{t<\eta_j^0 \le t+u}  >\ep/3\Bigg) \non \\
& \le \P \Bigg(  N^{-1} \sum_{j=1}^{ {\mfI}^N(0, (T'-t)^+)}\bone_{t<\eta_j^0 \le t+\delta}  >\ep/3\Bigg) \non \\
& \le \P \Bigg(  N^{-1} \sum_{j=1}^{ {\mfI}^N(0, (T'-t)^+)}\bigg[\bone_{t<\eta_j^0 \le t+\delta}
- \frac{F^c(t+ \tilde{\tau}_{j,0}^N  ) - F^c(t+ \delta+\tilde{\tau}_{j,0}^N  )}{F^c(\tilde{\tau}_{j,0}^N  )}\bigg] >\ep/6\Bigg) \non \\
&\quad+\P \Bigg(  \int_0^{(T'-t)^+}    \frac{F^c(t+s  ) - F^c(t+ \delta+s )}{F^c(s)}   \bar{\mfI}^N(0, ds)>\ep/6\Bigg)
\end{align}

By the conditional independence of the $\eta^0_j$'s, the first term on the right of 
\eqref{eqn-Vn-diff-conv-1-p2}
 is bounded by
\begin{align*}
& \frac{36}{\ep^2 }N^{-1}  \E\left[
\int_0^{(T'-t)^+}    \frac{F^c(t+s  ) - F^c(t+ \delta+s )}{F^c(s)} \left( 1-  \frac{F^c(t+s  ) - F^c(t+ \delta+s )}{F^c(s)} \right)  \bar{\mfI}^N(0, ds)    \right]  \\
& \le 
\frac{36}{\ep^2 }N^{-1}  \E\left[
\int_0^{(T'-t)^+}    \frac{F^c(t+s  ) - F^c(t+ \delta+s )}{F^c(s)}   \bar{\mfI}^N(0, ds)    \right] ,
\end{align*}
which converges to zero as $N\to \infty$. Since by Assumption \ref{AS-FLLN-Initial} $ \bar{\mfI}(0,\cdot)$ is continuous, thanks to Lemma \ref{le:Portmanteau}, $\limsup_N$ of the second term is upper bounded by
\[{\bf1}\bigg\{\int_0^{(T'-t)^+}    \frac{F^c(t+s  ) - F^c(t+ \delta+s )}{F^c(s)}  \bar{\mfI}(0, ds) \ge\ep/6\bigg\},\]
which is zero for $\delta>0$ small enough (clearly uniformly over $t \in [0,T]$).

The second term on the right of \eqref{eqn-Vn-diff-conv-1-p1} is treated exactly as the last term we have just analyzed. Finally for the third term, we note that 
\begin{align*}
& \P \left( \sup_{u \in [0,\delta]} \sup_{x \in [0,T']}  \big| \bar{\mfI}^N(0, (x-t)^+)- \bar{\mfI}^N(0, (x-t-u)^+)\big |  >\ep/3 \right)  \\
& =  \P \left( \sup_{x \in [0,T']} \big | \bar{\mfI}^N(0, (x-t)^+) - \bar{\mfI}^N(0, (x-t-\delta)^+)\big|  >\ep/3 \right).
\end{align*}
Thanks to Assumption \ref{AS-FLLN-Initial}, the $\limsup_N$ of this probability is upper bounded by 
\[
{\bf 1} \left\{ \sup_{x \in [0,T']}  \big|\bar{\mfI}(0, (x-t)^+)- \bar{\mfI}(0, (x-t-\delta)^+) \big|  \ge\ep/3 \right\}
\]
which is zero for $\delta>0$ small enough, since $ \bar{\mfI}(0,\cdot)$ is continuous. The uniformity over $t\in [0,T]$ is obvious. Thus we have shown \eqref{eqn-Vn-diff-conv-1}.

We next prove  \eqref{eqn-Vn-diff-conv-2}. 
Observe that
\[
 V^N(t, x+v) - V^N(t,x) = N^{-1} \sum_{j={\mfI}^N(0, (x-t)^+)+1}^{ {\mfI}^N(0, (x+v-t)^+)} \left(\bone_{\eta_j^0 > t} - \frac{F^c(t+ \tilde{\tau}_{j,0}^N  )}{F^c(\tilde{\tau}_{j,0}^N  )}  \right)
\]
from which we obtain 
\begin{align*}
& \P \left( \sup_{v \in [0,\delta]} \sup_{t \in [0,T]} \big|V^N(t, x+v) - V^N(t,x) \big|>\ep\right) \\
& \le \P \left( \sup_{v \in [0,\delta]} \sup_{t \in [0,T]} \big| \bar{\mfI}^N(0, (x+v-t)^+) - \bar{\mfI}^N(0, (x-t)^+)\big|>\ep\right) \\
& \le  \P \left( \sup_{t \in [0,T]} \big| \bar{\mfI}^N(0, (x+\delta-t)^+) - \bar{\mfI}^N(0, (x-t)^+)\big|>\ep\right)\,.
\end{align*}
Then following the same argument as for the second term on the right of \eqref{eqn-Vn-diff-conv-1-p1}, we can conclude  \eqref{eqn-Vn-diff-conv-2}. 
\end{proof}

\subsection{Convergence of $\bar{\mfI}^{N}_1$}

We first write the process $A^N$ as 
\begin{align}\label{eqn-An-rep}
A^N(t) = M_A^N(t)  +  \Lambda^N(t), 
\end{align}
where 
\begin{align} \label{eqn-Lambda}
\Lambda^N(t) : =  \int_0^t \Upsilon^N(s) ds,
\end{align}
and
\begin{align} \label{eqn-Mn-rep}
M_A^N(t)  =  \int_0^t \int_0^\infty \bone_{u \le \Upsilon^N(s^-) } \overline{Q}(ds,du) , 
\end{align}
where $\overline{Q}(ds,du) = Q(ds,du) - ds du$ is the compensated PRM.

\begin{lemma} \label{lem-Martingale}
Under Assumption \ref{AS-lambda},  the process $\{M^N_A(t): t\ge 0\}$ is a square-integrable martingale with respect to the filtration $\sF^N_A= \{\sF^N_A(t): t\ge0\}$ where 
\begin{align*}
\sF^N_A(t) &:=  \sigma\big\{ I^N(0), \tilde{\tau}^N_{j}: j =1,\dots, I^N(0)\big\} \vee \sigma \big\{\lambda^0_j(\cdot)_{j\ge1}, \lambda_i(\cdot)_{i\ge 1} \big\} \\
 &\qquad \vee \sigma\bigg\{ \int_0^{t'} \int_0^\infty  \bone_{u \le \Upsilon^N(s^{-})} Q(ds,du):  0 \le t' \le t \bigg\}. 
\end{align*}
The quadratic variation of $M^N_A(t)$ is given by
\begin{equation} \label{eqn-MA-qv}
\langle M^N_A \rangle(t) = \Lambda^N(t), \quad t \ge 0. 
\end{equation}
\end{lemma}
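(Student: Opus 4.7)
The plan is to invoke the standard theory of stochastic integrals with respect to a compensated Poisson random measure (see e.g.\ \cite[Chapter VI]{Cinlar2011probability}). The two things to verify are (a) that the integrand $(s,u,\omega)\mapsto \bone_{u\le \Upsilon^N(s^-)}$ is predictable with respect to the filtration $\sF^N_A$ and integrable enough, and (b) that the quadratic variation formula for stochastic integrals against $\overline Q$ specializes to $\Lambda^N$ in our setting.

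For (a), I would first observe that Assumption \ref{AS-lambda} gives $\lambda_i(\cdot),\lambda^0_j(\cdot)\le\lambda^\ast$ a.s. on any compact interval, so by \eqref{eqn-cI-n} one has the deterministic bound $\sI^N(t)\le N\lambda^\ast$ on $[0,T]$ (note $A^N(t)\le N$ a.s.\ since only the $N$ initially susceptible individuals can get infected), and therefore
\[
\Upsilon^N(t)=\frac{S^N(t)}{N}\sI^N(t)\le N\lambda^\ast\qforq t\in[0,T].
\]
Predictability then follows by decomposing $\Upsilon^N(s^-)$ using \eqref{eqn-Upsilon}, \eqref{eqn-cI-n}, \eqref{eqn-An-PRM} and \eqref{eqn-Sn-rep}: $S^N(s^-)=S^N(0)-A^N(s^-)$ is measurable w.r.t.\ the third $\sigma$-field in the definition of $\sF^N_A(s^-)$ (as $A^N(s^-)$ is the value at time $s^-$ of the PRM-stochastic integral), while $\sI^N(s^-)$ is built from $A^N(s^-)$, the pre-$s$ infection times $\{\tau^N_i\}_{i\le A^N(s^-)}$ (which are the jump times of $A^N$, hence $\sF^N_A(s^-)$-measurable) and the i.i.d.\ infectivity sample paths $\{\lambda_i(\cdot)\}$, $\{\lambda^0_j(\cdot)\}$ included in the second $\sigma$-field. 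Hence $s\mapsto \Upsilon^N(s^-)$ is left-continuous and adapted, thus predictable.

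For (b), once predictability and local boundedness hold, the classical result for stochastic integrals of non-negative predictable integrands against a compensated PRM yields that $M^N_A$ is a local martingale with predictable quadratic variation
\[
\langle M^N_A\rangle(t)=\int_0^t\!\int_0^\infty \bigl(\bone_{u\le\Upsilon^N(s^-)}\bigr)^{2}\,ds\,du
=\int_0^t\Upsilon^N(s^-)\,ds=\int_0^t\Upsilon^N(s)\,ds=\Lambda^N(t),
\]
where we used $\bone^2=\bone$ in the first equality, and in the last the fact that $\Upsilon^N$ has at most countably many jumps and hence agrees with $\Upsilon^N(\cdot^-)$ Lebesgue-a.e. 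The uniform bound $\Upsilon^N(t)\le N\lambda^\ast$ on $[0,T]$ gives $\E[\Lambda^N(T)]<\infty$ and $\E[\langle M^N_A\rangle(T)]<\infty$, upgrading $M^N_A$ from a local martingale to a genuine square-integrable martingale on each $[0,T]$.

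The only point requiring any care is the verification of predictability of $\Upsilon^N(s^-)$ with respect to the given filtration; the martingale property and the form of the quadratic variation are then immediate from standard results for integrals against compensated Poisson random measures, and no further stochastic-calculus computation is needed.
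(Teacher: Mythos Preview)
Your proposal is correct and follows essentially the same approach as the paper: both rely on the standard theory of stochastic integrals against a compensated Poisson random measure together with the deterministic bound $\Upsilon^N\le N\lambda^\ast$ (yielding $\E[\Lambda^N(t)]\le N\lambda^\ast t$) to obtain integrability, square-integrability, and the quadratic variation formula. The paper's proof is terser---it simply asserts adaptedness, gives the integrability bound, says the martingale property ``can be checked using the above definition of the filtration,'' and declares ``the rest is standard''---whereas you spell out the predictability verification for $\Upsilon^N(s^-)$ and the quadratic variation computation explicitly, but there is no substantive difference in strategy.
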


\begin{proof}
It is clear that $M^N_A(t) \in \sF^N_A(t)$, and  $\E[|M_A^N(t)|] \le 2 \E[\Lambda^N(t)] \le 2 \lambda^* Nt <\infty$ for each $t\ge 0$, under Assumption \ref{AS-lambda}.
It suffices to verify the martingale property:  for $t_2>t_1\ge 0$,
$$
\E\big[M^N_A(t_2)- M^N_A(t_1)\big|\sF^N_A(t_1) \big] =0 
$$
which can be checked using  the above definition of the filtration. 
In addition, $\E[(M_A^N(t))^2] = \E[\Lambda^N(t)] \le  \lambda^* N t<\infty$ for each $t\ge 0$. The rest is standard. 
\end{proof}

Recall that $(\bar{A}^N, \bar{S}^N, \bar{\Upsilon}^N) := N^{-1}(A^N, S^N, \Upsilon^N) $.
\begin{lemma} \label{lem-tight}
 Under Assumptions \ref{AS-FLLN-Initial} and  \ref{AS-lambda},  the  sequence of processes $\{(\bar{A}^N, \bar{S}^N): N \in\NN\}$ is tight in $D^2$.  The limit of each convergence subsequence of $\{\bar{A}^N\}$, denoted by $\bar{A}$, satisfies
\begin{equation} \label{eqn-An-conv1}
\bar{A}= \lim_{N\to\infty} \bar{A}^N = \lim_{N\to\infty}  \int_0^{\cdot} \bar{\Upsilon}^N(u) du, 
\end{equation}
and 
\begin{equation} \label{eqn-barPhi-bound}
0 \le \int_s^{t} \bar{\Upsilon}^N(u) du \le \lambda^*(t-s),  \quad \text{w.p.\,1} \qforq 0 \le s \le t.
\end{equation}
\end{lemma}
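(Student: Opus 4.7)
\medskip

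The plan is to establish the uniform bound on $\bar{\Upsilon}^N$ first, which immediately yields both the drift estimate \eqref{eqn-barPhi-bound} and Lipschitz continuity of the drift part of $\bar{A}^N$; then to show the martingale part $N^{-1}M^N_A$ vanishes uniformly on compacts; from which tightness and the identification of limits follow. Specifically, I would proceed in the following steps.

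Step 1: uniform boundedness of $\bar{\Upsilon}^N$. By Assumption \ref{AS-lambda}, each $\lambda^0_j$ and $\lambda_i$ is bounded by $\lambda^\ast$, and since the number of non--zero summands in $\mathcal{I}^N(t)$ in \eqref{eqn-cI-n} is at most $I^N(t) \le N$, we obtain $\mathcal{I}^N(t) \le \lambda^\ast N$ a.s. Combined with $S^N(t) \le N$, we deduce from \eqref{eqn-Upsilon} that
\[ 0 \le \bar{\Upsilon}^N(t) = \frac{S^N(t)}{N} \cdot \frac{\mathcal{I}^N(t)}{N} \le \lambda^\ast \quad\text{a.s.}\]
Integrating over $[s,t]$ yields \eqref{eqn-barPhi-bound}, and shows that the process $t\mapsto \int_0^t \bar{\Upsilon}^N(u)du$ is $\lambda^\ast$--Lipschitz uniformly in $N$ and $\omega$.

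Step 2: the martingale term is negligible. By Lemma \ref{lem-Martingale}, $N^{-1}M^N_A$ is a square--integrable martingale with predictable quadratic variation $N^{-2}\Lambda^N(t) \le N^{-1}\lambda^\ast t$. Applying Doob's $L^2$ inequality,
\[ \E\bigl[ \sup_{s\le t} (N^{-1} M_A^N(s))^2\bigr] \le 4 N^{-2} \E[\Lambda^N(t)] \le 4 N^{-1} \lambda^\ast t \underset{N\to\infty}{\longrightarrow} 0,\]
so that $N^{-1} M^N_A \to 0$ uniformly on compacts, in probability.

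Step 3: tightness and limits. Writing $\bar{A}^N(t) = N^{-1}M^N_A(t) + \int_0^t \bar{\Upsilon}^N(u)du$, the second term belongs to an equicontinuous family of Lipschitz functions vanishing at $0$, hence is relatively compact in $C(\R_+;\R_+)$ by Arzel\`a--Ascoli, while the first term converges to $0$ uniformly on compacts in probability by Step 2. Consequently $\{\bar{A}^N\}$ is tight in $D$, and every subsequential limit $\bar{A}$ is continuous and satisfies \eqref{eqn-An-conv1}: indeed, along any convergent subsequence $\bar{A}^{N_k}\Rightarrow \bar{A}$, the identity above and Step 2 force $\int_0^\cdot \bar{\Upsilon}^{N_k}(u)du \Rightarrow \bar{A}$ as well. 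Finally, from \eqref{eqn-Sn-rep} we have $\bar{S}^N(\cdot) = \bar{S}^N(0) - \bar{A}^N(\cdot)$; tightness of $\bar{S}^N$ in $D$ then follows from tightness of $\bar{A}^N$ and of the initial value $\bar{S}^N(0)$ provided by Assumption \ref{AS-FLLN-Initial}, yielding the joint tightness of $(\bar{A}^N,\bar{S}^N)$ in $D^2$.

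No single step is genuinely hard here; the only point that requires a little care is Step 1, namely the a.s. bound $\mathcal{I}^N(t) \le \lambda^\ast N$, which rests on the fact that although the sums in \eqref{eqn-cI-n} run over growing index sets, the summands vanish once an individual is no longer infected, so the count of non--zero contributions is at most $I^N(t) \le N$. Everything else is a direct consequence of this deterministic bound together with Doob's inequality.
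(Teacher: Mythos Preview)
Your proof is correct and follows essentially the same approach as the paper: bound $\bar{\Upsilon}^N$ uniformly by $\lambda^\ast$, use Doob's inequality on the quadratic variation $\langle \bar{M}^N_A\rangle_t\le N^{-1}\lambda^\ast t$ to kill the martingale part, and conclude tightness from the Lipschitz drift and the relation $\bar{S}^N=\bar{S}^N(0)-\bar{A}^N$. Your Step~1 is slightly more explicit than the paper (which simply asserts the Lipschitz bound on $\bar{\Lambda}^N$ as ``clear''), but the arguments are otherwise identical.
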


\begin{proof}
It is clear that under Assumption \ref{AS-lambda}, if $\bar{\Lambda}^N(t):=\int_0^t \bar{\Upsilon}^N(u) du$,
 $\bar{\Lambda}^N(0)=0$ and 
\begin{align}
0\le\bar{\Lambda}^N(t) - \bar{\Lambda}^N(s) \le  \lambda^* (t-s), \quad \text{w.p.\,1} \qforq 0 \le s \le t.
\end{align}
Since
\[ \langle\bar{M}^N_A\rangle_t\le N^{-1}\lambda^\ast t,\]
it follows from Doob's inequality that $\bar{M}^N_A(t)$ tends to $0$ in probability, locally uniformly in $t$.
The tightness of $\{\bar{A}^N: N \in\NN\}$
in $D$ follows. Since $\bar{S}^N = \bar{S}^N(0) - \bar{A}^N$ and $\bar{S}^N(0) \RA \bar{S}(0)$ from Assumption \ref{AS-FLLN-Initial}, we obtain the tightness of $\{\bar{S}^N: N \in\NN\}$
in $D$, and thus the claim of the lemma. 
\end{proof}

In the following of this section, we consider  a convergent subsequence of $\bar{A}^N$. 

Recall that 
\begin{align*}
\bar{\mfI}^{N}_1(t,x) = N^{-1} \sum_{i=A^N((t-x)^+)}^{A^N(t)} \bone_{\tau^N_i + \eta_i >t}, \quad t, x\ge 0.
\end{align*}

\begin{lemma} \label{lem-mfI-1-conv}
Under Assumptions \ref{AS-FLLN-Initial} and \ref{AS-lambda}, along a subsequence of $\bar{A}^N$ 
which converges weakly to $\bar{A}$, 
\begin{equation} \label{eqn-barfrakI-conv}
\bar{\mfI}^N_1(t,x) \Rightarrow \bar{\mfI}_1(t,x) \qinq D_D \qasq N \to \infty,
\end{equation}
where the limit $\bar{\mfI}_1(t,x)$ is given by
\begin{equation} \label{eqn-bar-mfI-1-rep1}
\bar{\mfI}_1(t,x) :=  \int_{(t-x)^+}^t F^c(t-s) d \bar{A}(s), \quad t, x \ge 0.
\end{equation}
\end{lemma}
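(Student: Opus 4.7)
\medskip

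\noindent\textbf{Proof proposal.} The strategy mirrors that of Lemma \ref{lem-barIN0-conv}. First, I would replace the random indicators $\bone_{\tau^N_i+\eta_i>t}$ by their conditional means, setting
\[
\wt{\mfI}^N_1(t,x) := N^{-1}\sum_{i=A^N((t-x)^+)+1}^{A^N(t)} F^c(t-\tau^N_i) = \int_{(t-x)^+}^t F^c(t-s)\,d\bar{A}^N(s),
\]
and then show separately that (a) $\wt{\mfI}^N_1 \to \bar{\mfI}_1$ in $D_D$ along the subsequence, and (b) $V^N := \bar{\mfI}^N_1 - \wt{\mfI}^N_1 \to 0$ in probability, locally uniformly.

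For step (a), along the convergent subsequence of $\bar{A}^N$, by Lemma \ref{lem-tight} the limit $\bar{A}$ is continuous (in fact Lipschitz, with $d\bar{A}(s) = \bar{\Upsilon}(s)\,ds$ absolutely continuous and bounded by $\lambda^\ast$). Since the càdlàg function $s \mapsto F^c(t-s)$ has only countably many discontinuities, and these carry no mass under $d\bar{A}$, Lemma \ref{le:Portmanteau} applied along the subsequence yields $\wt{\mfI}^N_1(t,x)\Rightarrow \bar{\mfI}_1(t,x)$ for each fixed $(t,x)$. To upgrade to convergence in $D_D$, I would verify condition (ii) of Theorem \ref{thm-DD-conv0} for $\wt{\mfI}^N_1 - \bar{\mfI}_1$; using the uniform bound $0\le \int_s^t d\bar{A}^N\le \lambda^\ast(t-s)$ from \eqref{eqn-barPhi-bound} together with right-continuity of $F^c$ and continuity of $\bar{A}$, I can control
\[
\sup_{u\in[0,\delta],\,x\in[0,T']}\big|\wt{\mfI}^N_1(t+u,x)-\wt{\mfI}^N_1(t,x)\big|\ \text{ and }\ \sup_{v\in[0,\delta],\,t\in[0,T]}\big|\wt{\mfI}^N_1(t,x+v)-\wt{\mfI}^N_1(t,x)\big|
\]
by differences of the form $\bar{A}^N(\cdot+\delta)-\bar{A}^N(\cdot)$ plus terms involving $F^c(\cdot+\delta)-F^c(\cdot)$, both of which become small uniformly as $\delta\to0$.

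For step (b), the key observation is that conditionally on the $\sigma$-algebra generated by $\sF_A^N(t)$ together with the $\{\tau^N_i\}$, the infectious periods $\eta_i$ (for $\tau^N_i\le t$) are independent, and $\eta_i$ has law $F$. Thus the summands in $V^N(t,x)$ are conditionally centered and orthogonal, giving
\[
\E[V^N(t,x)^2] = N^{-2}\,\E\!\left[\sum_{i=A^N((t-x)^+)+1}^{A^N(t)} F^c(t-\tau^N_i)F(t-\tau^N_i)\right] \le \frac{\lambda^\ast T'}{4N}\cdot\frac{1}{1},
\]
from which condition (i) of Theorem \ref{thm-DD-conv0} follows uniformly on $[0,T]\times[0,T']$. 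For condition (ii), I would decompose $V^N(t+u,x)-V^N(t,x)$ into (I) a centered sum over the same index range but with the indicator threshold shifted from $t$ to $t+u$, plus (II) a centered sum over the ``boundary'' indices $A^N((t-x)^+)<i\le A^N((t+u-x)^+)$ and $A^N(t)<i\le A^N(t+u)$. Piece (I) is bounded in absolute value by $N^{-1}\sum_i \bone_{t<\eta_i+\tau^N_i\le t+u}$ plus its compensator, whose supremum over $u\in[0,\delta]$ can be controlled by a second-moment estimate of the form $O(\lambda^\ast\delta/N) + \int(F^c(t+\cdot)-F^c(t+\delta+\cdot))\,d\bar A^N$; piece (II) is bounded by the increments $\bar{A}^N(t+u)-\bar{A}^N(t) + \bar A^N((t+u-x)^+)-\bar A^N((t-x)^+) \le 2\lambda^\ast\delta$ by \eqref{eqn-barPhi-bound}. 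The analogous splitting handles the $x$-increment.

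The main obstacle will be verifying the modulus-of-continuity condition (ii) for $V^N$ cleanly, because of the interplay between (i) the random summation limits $A^N((t-x)^+), A^N(t)$ shifting with $(t,x)$ and (ii) the indicator threshold $t$ also shifting; the joint shift must be dismantled into pieces that are each controlled either by a conditional-variance argument (for the purely centered part) or by the uniform Lipschitz increment bound on $\bar A^N$ (for the boundary part). Once these pieces are separated, the estimates become analogous to those already carried out in Lemma \ref{lem-barIN0-conv}.
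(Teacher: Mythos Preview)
Your proposal is correct and follows essentially the same route as the paper: the paper defines the same conditional-mean process (denoted $\breve{\mfI}^N_1$ rather than $\wt{\mfI}^N_1$), proves its convergence via Lemma~\ref{le:Portmanteau} plus a modulus-of-continuity estimate, and then shows the centered difference $Y^N=\bar{\mfI}^N_1-\breve{\mfI}^N_1$ satisfies the two conditions of Theorem~\ref{thm-DD-conv0}, with the $t$-increment decomposed exactly as you suggest (the detailed bounds for condition~(ii) are split off into Lemmas~\ref{lem-mfI-1-conv-p1} and~\ref{lem-mfI-1-conv-p2}, and the paper uses a PRM representation to control the second moment of your piece~(I)).

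One correction: your claim that $\bar{A}^N(t+u)-\bar{A}^N(t)\le\lambda^\ast\delta$ ``by \eqref{eqn-barPhi-bound}'' is not right as stated, since \eqref{eqn-barPhi-bound} bounds only the compensator $\int \bar\Upsilon^N$, not $\bar{A}^N$ itself (which has a martingale part $\bar{M}^N_A$). The paper handles this via the decomposition $\bar{A}^N=\bar{M}^N_A+\bar{\Lambda}^N$ (see \eqref{eqn-barAn-inc-bound}): since $\bar{M}^N_A\to0$ locally uniformly, one gets $\limsup_N\P(\sup_{u\le\delta}|\bar{A}^N(t+u)-\bar{A}^N(t)|>\ep)=0$ for $\delta<\ep/\lambda^\ast$, which is what condition~(ii) actually needs.
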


\begin{proof}
Let 
$$
\breve{\mfI}^N_1(t,x) := 
N^{-1} \sum_{i=A^N((t-x)^+)}^{A^N(t)} F^c(t-\tau^N_i) , \quad t, x\ge 0.
$$
We can write (from now on, $\int_a^b$ stands for $\int_{(a,b]}$)
\begin{equation} \label{eqn-breve-mfI-1-rep}
\breve{\mfI}^N_1(t,x)  = \int_{(t-x)^+}^t F^c(t-s) d \bar{A}^N(s). 
\end{equation}
Then from Lemma \ref{le:Portmanteau}, we deduce that for any $t,x\ge0$,
\begin{equation}\label{pointlimit}
\breve{\mfI}^N_1(t,x) \Rightarrow \bar{\mfI}_1(t,x)  \qasq N \to \infty\,.
\end{equation}
We will next show that for any $\ep>0$, there exists $\delta>0$ such that the following holds for any $(t,x)$:
\begin{align}\label{estimbreve}
\limsup_N\P\left(\sup_{t\le t'\le t+\delta, x\le x'\le x+\delta} \left|\breve{\mfI}^N_1(t,x)-\breve{\mfI}^N_1(t',x')\right|>\ep  \right)=0,
\end{align} 
It is not hard to deduce from \eqref{pointlimit} and \eqref{estimbreve}, by a two--dimensional extension of the argument of the Corollary on page 83 of \cite{billingsley1999convergence}, that as $N\to\infty$,
$\breve{\mfI}^N_1(t,x) \Rightarrow \bar{\mfI}_1(t,x)$ locally uniformly in $t$ and $x$. 
 Whenever $t\le t'\le t+\delta$ and $x\le x'\le x+\delta$, we have
\begin{align*}
\big|\breve{\mfI}_1^N(t,x)-\breve{\mfI}_1^N(t',x')\big|&\le\int_{(t-x)^+}^t\big[F^c(t-s)-F^c(t'-s)\big]d\bar{A}^N(s)
+2\sup_{ 0\le t_2- t_1\le2\delta}\big[\bar{A}^N(t_2)-\bar{A}^N(t_1)\big]\,.
\end{align*}
Since $\bar{A}^N(t)\Rightarrow \int_0^t\bar{\Upsilon}(s)ds$ locally uniformly in $t$, and $\bar{\Upsilon}(s)\le\lambda^\ast$, the limit in law of the right hand side of the last inequality is bounded by
\[ \lambda^\ast \int_{(t-x)^+}^t\sup_{t\le t'\le t+\delta}\big[F^c(t-s)-F^c(t'-s)\big]ds+4\lambda^\ast\delta,\]
which is less than $\ep$ for $\delta>0$ small enough. Hence, \eqref{estimbreve} follows.

Let now $$
Y^N(t,x) := \bar{\mfI}^N_1(t,x) - \breve{\mfI}^N_1(t,x) =  N^{-1} \sum_{i=A^N((t-x)^+)}^{A^N(t)}\big(  \bone_{\tau^N_i + \eta_i >t} -F^c(t-\tau^N_i) \big), \quad t,x \ge 0. 
$$

To prove \eqref{eqn-barfrakI-conv}, it remains to show that, as $N\to\infty$, 
\begin{equation} \label{eqn-Yn-conv-0}
Y^N \to 0 \qinq D_D \quad\text{in probability}.
\end{equation}
We apply Theorem \ref{thm-DD-conv0}. By Markov's inequality and the decomposition of $A^N(t)$ in \eqref{eqn-An-rep} with $\E[M_A^N(t)]=0$,   we obtain
\begin{align*}
\P \left(Y^N(t,x)  >\ep  \right) & \le \frac{1}{\ep^2} \E \left[Y^N(t,x)^2 \right] \\
& = \frac{1}{\ep^2 N} \E \left[ \int_{(t-x)^+}^t F(t-s) F^c(t-s) d \bar{A}^N(s) \right] \\
& = \frac{1}{\ep^2 N} \E \left[ \int_{(t-x)^+}^t F(t-s) F^c(t-s) \bar{\Upsilon}^N(s) ds \right] \\
& \le     \frac{1}{\ep^2 N}   \lambda^* \int_{(t-x)^+}^t F(t-s) F^c(t-s) ds \\
\sup_{t\in[0,T],x\in[0,T']}\P \left(Y^N(t,x)  >\ep  \right) & \to 0 \qasq N \to \infty. 
\end{align*}
The result then follows from the next two lemmas. 
\end{proof}

\begin{lemma} \label{lem-mfI-1-conv-p1}
Under the assumptions of Lemma \ref{lem-mfI-1-conv}, 
for $\ep>0$, as $\delta \to 0$, 
\begin{align} \label{eqn-Y-diff-p1}
 \limsup_N  \sup_{t\in [0,T]} \frac{1}{\delta}  \P \bigg(  \sup_{u \in [0,\delta]}\sup_{x \in [0,T']} \big|Y^N(t+u,x) - Y^N(t,x)\big| > \ep\bigg) \to 0, 
\end{align}
\end{lemma}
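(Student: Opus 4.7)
The strategy is to decompose the increment $Y^N(t+u,x)-Y^N(t,x)$ into three parts corresponding to the three ways the defining sum for $Y^N$ changes when $t$ is replaced by $t+u$: new indices appear at the top (those $i$ with $\tau^N_i\in(t,t+u]$), some indices are removed at the bottom (those with $\tau^N_i\in((t-x)^+,(t+u-x)^+]$), and for the remaining ``middle'' indices $i$ with $\tau^N_i\in((t+u-x)^+,t]$ the summand itself changes. Writing
$$Y^N(t+u,x)-Y^N(t,x)=B^N_1(t,u)+B^N_2(t,u,x)-B^N_3(t,u,x),$$
$B^N_1$ and $B^N_3$ are sums of terms each bounded by $1$, while the summands of $B^N_2$ take the form
$$\bigl(\bone_{\tau^N_i+\eta_i>t+u}-\bone_{\tau^N_i+\eta_i>t}\bigr)-\bigl(F^c(t+u-\tau^N_i)-F^c(t-\tau^N_i)\bigr)=-\bone_{t<\tau^N_i+\eta_i\le t+u}+\bigl(F(t+u-\tau^N_i)-F(t-\tau^N_i)\bigr),$$
each having conditional mean zero given $\{\tau^N_i\}_i$.

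For $B^N_1$ and $B^N_3$, the pathwise bounds
$$\sup_{u\le\delta}|B^N_1(t,u)|\le\bar A^N(t+\delta)-\bar A^N(t),\qquad \sup_{u\le\delta,\,x\le T'}|B^N_3(t,u,x)|\le\sup_{s\in[0,T]}\bigl(\bar A^N(s+\delta)-\bar A^N(s)\bigr)$$
reduce matters to controlling the modulus of continuity of $\bar A^N$. I will use the semimartingale decomposition $\bar A^N=\bar M^N_A+\bar\Lambda^N$ from \eqref{eqn-An-rep}; since $\bar\Lambda^N$ is Lipschitz with constant $\lambda^\ast$, for $\delta<\ep/(2\lambda^\ast)$ only the martingale part matters. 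Doob's $L^2$ inequality applied to $\bar M^N_A$, together with $\langle M^N_A\rangle=\Lambda^N\le\lambda^\ast N T$ from Lemma \ref{lem-Martingale}, then gives probability bounds of order $1/N$, so that $\limsup_N$ of these probabilities is $0$ for each fixed $\delta>0$, and division by $\delta$ preserves this.

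For $B^N_2$ I split further as $B^N_{2,1}+B^N_{2,2}$, with ``noise'' $B^N_{2,1}=-N^{-1}\sum_{i\in J}\bone_{t<\tau^N_i+\eta_i\le t+u}$ and ``compensator'' $B^N_{2,2}=N^{-1}\sum_{i\in J}(F(t+u-\tau^N_i)-F(t-\tau^N_i))$ where $J=\{A^N((t+u-x)^+)+1,\dots,A^N(t)\}$. Monotonicity of $F$ in its argument and of $\bar A^N$ yields the uniform (in $u,x$) deterministic domination
$$\sup_{u\le\delta,\,x\le T'}\bigl(|B^N_{2,1}|\vee|B^N_{2,2}|\bigr)\le\int_0^t\bigl(F(t+\delta-s)-F(t-s)\bigr)\,d\bar A^N(s).$$
Decomposing $d\bar A^N=d\bar M^N_A+\bar\Upsilon^N\,ds$, the absolutely continuous part contributes at most $\lambda^\ast\psi(\delta)$, where $\psi(\delta):=\int_0^T(F(\delta+r)-F(r))\,dr\to 0$ as $\delta\to 0$ by dominated convergence (right-continuity of $F$ and the integrand bounded by $1$), while the martingale-integral part has $L^2$ norm $O(\sqrt{\psi(\delta)/N})$ by the It\^o isometry applied with $\langle\bar M^N_A\rangle=N^{-1}\bar\Lambda^N$. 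For $B^N_{2,1}$ I add a conditional Chebyshev step: given $\sigma(A^N,\tau^N_i:i\ge 1)$, the $\bone_{t<\tau^N_i+\eta_i\le t+\delta}$ are independent Bernoulli variables, and the centered sum has conditional variance bounded by $N^{-1}\int_0^t(F(t+\delta-s)-F(t-s))\,d\bar A^N(s)$, giving again an $L^2$ bound of order $\sqrt{\psi(\delta)/N}$ on the fluctuation around the conditional mean (which itself is handled by the $B^N_{2,2}$ analysis).

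The main obstacle is that the target estimate demands the probability divided by $\delta$ to vanish, and must accommodate the suprema over $u\in[0,\delta]$ and $x\in[0,T']$. A naive pointwise $L^2$ bound gives probabilities of order $\delta/N$ which survive division by $\delta$ only through $\limsup_N$; the construction above sidesteps this by dominating each piece deterministically by a single $(u,x)$-independent quantity whose bounded-variation component is small in $\delta$ (absorbed below the threshold $\ep$ for small $\delta$) and whose martingale fluctuation has $L^2$ norm $O(N^{-1/2})$. After dividing by $\delta$ and taking $\limsup_N$, the fluctuation contribution is $O((\delta N)^{-1})$ or $O(\psi(\delta)(\delta N)^{-1})$, which vanishes for every fixed $\delta>0$.
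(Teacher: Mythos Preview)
Your decomposition into $B^N_1+B^N_2-B^N_3$ is essentially the same as the paper's (the paper writes an algebraically equivalent four-term bound), and your treatment of $B^N_1$, $B^N_3$, and the compensator $B^N_{2,2}$ matches the paper's argument closely. The key quantitative input, that $\int_0^t\bigl(F(t+\delta-s)-F(t-s)\bigr)\bar\Upsilon^N(s)\,ds\le\lambda^\ast\psi(\delta)$ with $\psi(\delta)\to0$, is also the same (the paper uses the slightly sharper bound $\psi(\delta)\le\delta$, but either suffices). One remark on exposition: the displayed ``deterministic domination'' you state for $|B^N_{2,1}|$ is not literally true pathwise, though you immediately correct this by passing to the Chebyshev step, so the logic is intact.

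There is, however, a genuine technical issue in your handling of $B^N_{2,1}$. You assert that given $\sigma(A^N,\tau^N_i:i\ge1)$ the indicators $\bone_{t<\tau^N_i+\eta_i\le t+\delta}$ are independent Bernoulli variables with success probabilities $F(t+\delta-\tau^N_i)-F(t-\tau^N_i)$. This is not correct in the model at hand: $\eta_i$ is a functional of $\lambda_i$, and $\lambda_i$ feeds into the infectivity process $\mathcal I^N$ after time $\tau^N_i$, hence into $\tau^N_j$ for $j>i$. Conditioning on all later infection times therefore leaks information about $\lambda_i$ and thus about $\eta_i$; the conditional law need not be $F$, and the indicators need not be conditionally independent. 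The variance bound you want, namely $\E\bigl[(W^N-\text{centering})^2\bigr]\le N^{-1}\E\bigl[\int_0^t(F(t+\delta-s)-F(t-s))\,d\bar A^N(s)\bigr]$, is nonetheless correct; it can be obtained either via the Poisson-random-measure representation on $\R_+^3$ that the paper uses (writing the indicator sum as $N^{-1}\int\!\!\int\!\!\int\bone_{t-s<z\le t+\delta-s}\bone_{r\le\Upsilon^N(s^-)}\breve Q(ds,dr,dz)$ and applying the isometry for the compensated PRM), or by a martingale-difference argument with respect to the filtration $\mathcal G_i=\sigma(\lambda^0_\cdot,\lambda_1,\dots,\lambda_i,Q,\text{initial data})$, for which $\tau^N_i$ is $\mathcal G_{i-1}$-measurable and $\eta_i\perp\mathcal G_{i-1}$, so that $\{A^N(t)\ge i\}\in\mathcal G_{i-1}$ and the increments $\bone_{t<\tau^N_i+\eta_i\le t+\delta}-(F(t+\delta-\tau^N_i)-F(t-\tau^N_i))$ form a martingale-difference sequence. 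Either route repairs the gap without changing the rest of your argument.
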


\begin{proof}
We have 
\begin{align*}
& \big|Y^N(t+u,x) - Y^N(t,x)\big|\\
 &= \Bigg| N^{-1} \sum_{i=A^N((t+u-x)^+)}^{A^N(t+u)}\big(  \bone_{\tau^N_i + \eta_i >t+u} -F^c(t+u-\tau^N_i) \big)
 - N^{-1} \sum_{i=A^N((t-x)^+)}^{A^N(t)}\big(  \bone_{\tau^N_i + \eta_i >t} -F^c(t-\tau^N_i) \big) \Bigg| \\
 &= \Bigg| N^{-1} \sum_{i=A^N((t-x)^+)}^{A^N(t+u)} \Big[\big(  \bone_{\tau^N_i + \eta_i >t+u} -F^c(t+u-\tau^N_i) \big) - \big(  \bone_{\tau^N_i + \eta_i >t} -F^c(t-\tau^N_i) \big) \Big]  \\
 & \quad - N^{-1} \sum_{i=A^N((t-x)^+)}^{A^N(t+u-x)^+ -1}\big(  \bone_{\tau^N_i + \eta_i >t+u} -F^c(t+u-\tau^N_i) \big)
 + N^{-1} \sum_{i=A^N(t)+1}^{A^N(t+u)}\big(  \bone_{\tau^N_i + \eta_i >t} -F^c(t-\tau^N_i) \big) \Bigg| \\
 & \le  N^{-1} \sum_{i=A^N((t-x)^+)}^{A^N(t+u)} \Big[ \bone_{t< \tau^N_i + \eta_i \le t+u} + \big(F^c(t-\tau^N_i) - F^c(t+u-\tau^N_i)  \big) \Big]  \\
 & \quad + \Bigg| N^{-1} \sum_{i=A^N((t-x)^+)}^{A^N(t+u-x)^+ -1}\big(  \bone_{\tau^N_i + \eta_i >t+u} -F^c(t+u-\tau^N_i) \big) \Bigg|
 + \Bigg| N^{-1} \sum_{i=A^N(t)+1}^{A^N(t+u)}\big(  \bone_{\tau^N_i + \eta_i >t} -F^c(t-\tau^N_i) \big) \Bigg| \\
 & \le  N^{-1} \sum_{i=A^N((t-x)^+)}^{A^N(t+u)}  \bone_{t< \tau^N_i + \eta_i \le t+u}  + N^{-1} \sum_{i=A^N((t-x)^+)}^{A^N(t+u)}  \big(F^c(t-\tau^N_i) - F^c(t+u-\tau^N_i)  \big) \\
 & \quad + \Big( \bar{A}^N((t+u-x)^+)- \bar{A}^N((t-x)^+) \Big) + \Big( \bar{A}^N(t+u) - \bar{A}^N(t) \Big). 
\end{align*}

Then we obtain
\begin{align} \label{eqn-Y-diff-p1-1}
 & \P \left(  \sup_{u \in [0,\delta]}\sup_{x \in [0,T']} \big|Y^N(t+u,x) - Y^N(t,x)\big| > \ep\right)  \non \\
 & \le  \P \left( N^{-1} \sum_{i=A^N((t-T')^+)}^{A^N(t+\delta)}  \bone_{t <\tau^N_i + \eta_i \le t+\delta}  > \ep/3 \right)  \non \\
 & \quad +  \P \left( N^{-1} \sum_{i=A^N((t-T')^+)}^{A^N(t+\delta)}\big( F^c(t-\tau^N_i)  -F^c(t+\delta-\tau^N_i) \big)  > \ep/3 \right)  \non \\
&  \quad +2 \P \left(\sup_{0\le s\le T} \big|\bar{A}^N(s+\delta) - \bar{A}^N(s) \big|  > \ep/6 \right)\,.
\end{align}
Let $\breve{Q}(ds,dr,dz)$ denote a PRM on $\RR_+^3$ with mean measure $\nu(ds,dr,dz) = ds dr F(dz)$ and $\wt{Q}$ denote the associated compensated PRM. 
By the Markov inequality, we obtain the first term is bounded by $9\ep^{-2}$ times
\begin{align*}
& \E  \left[ \left(  N^{-1} \sum_{i=A^N((t-T')^+)}^{A^N(t+\delta)}  \bone_{t <\tau^N_i + \eta_i \le t+\delta} \right)^2 \right] \\
&=  \E  \left[ \left(  N^{-1}  \int_{(t-T')^+}^{t+\delta} \int_0^\infty \int_{t-s}^{t+\delta-s} \bone_{r \le {\Upsilon}^N(s^-)} \breve{Q} (ds,dr, dz)  \right)^2 \right]  \\
& \le 2 \E  \left[ \left(  N^{-1}  \int_{(t-T')^+}^{t+\delta} \int_0^\infty \int_{t-s}^{t+\delta-s} \bone_{r \le 
N \bar{\Upsilon}^N(s^-)} \wt{Q} (ds,dr, dz)  \right)^2 \right] \\
& \quad + 2 \E  \left[ \left(  \int_{(t-T')^+}^{t+\delta}   \big( F(t+\delta-s) - F(t-s) \big) \bar{\Upsilon}^N(s) ds  \right)^2 \right] \\
& =  2 N^{-1}\E  \left[  \int_{(t-T')^+}^{t+\delta} \big( F(t+\delta-s) - F(t-s) \big) \bar{\Upsilon}^N(s) ds \right] \\
& \quad +  2 \E  \left[ \left(  \int_{(t-T')^+}^{t+\delta}   \big( F(t+\delta-s) - F(t-s) \big) \bar{\Upsilon}^N(s) ds \right)^2 \right] \\
& \le 2 \lambda^* N^{-1} \int_{(t-T')^+}^{t+\delta} \big( F(t+\delta-s) - F(t-s) \big) ds \\
& \quad +  2 \left(  \lambda^* \int_{(t-T')^+}^{t+\delta}   \big( F(t+\delta-s) - F(t-s) \big) ds \right)^2,
\end{align*}
where the last inequality follows from \eqref{eqn-barPhi-bound}. 
The first term converges to zero as $N\to \infty$, and we note that
\[ \int_{(t-T')^+}^{t+\delta}   \big( F(t+\delta-s) - F(t-s) \big) ds=\int_{t-(t-T')^+}^{t-(t-T')^++\delta}F(r)dr-\int_0^\delta F(r)dr\le\delta\,,\]
since $F(r)\le1$. Hence
\begin{align}\label{eqn-Y-diff-p1-2}
\frac{1}{\delta}\left(  \lambda^* \int_{(t-T')^+}^{t+\delta}   \big( F(t+\delta-s) - F(t-s) \big) ds \right)^2\le\lambda^\ast \delta \to 0, \qasq \delta \to 0. 
\end{align}
For the second term in \eqref{eqn-Y-diff-p1-1}, we have
\begin{align*}
 &  \E\left[  \left( 
 \int_{(t-T')^+}^{t+\delta}  \big( F^c(t-s)  -F^c(t+\delta-s) \big)d \bar{A}^N(s)  \right)^2 \right]  \\
 & \le 2 \E\left[  \left( 
 \int_{(t-T')^+}^{t+\delta}  \big( F^c(t-s)  -F^c(t+\delta-s) \big)d \bar{M}_A^N(s)  \right)^2 \right]  \\ 
 & \quad +  2\E\left[  \left( 
 \int_{(t-T')^+}^{t+\delta}  \big( F^c(t-s)  -F^c(t+\delta-s) \big) \bar{\Upsilon}^N(s)ds  \right)^2 \right], 
\end{align*}
where the first term converges to zero as $N \to \infty$ by the convergence $\bar{M}^N_A (t) \to 0$ in probability, locally uniformly in $t$, 
while the second term is bounded as in \eqref{eqn-Y-diff-p1-2}.

For the last term  in \eqref{eqn-Y-diff-p1-1}, we use the martingale decomposition of $\bar{A}^N$ and the bound for $\bar{\Upsilon}^N$ in  \eqref{eqn-barPhi-bound}, and obtain
\begin{align}\label{eqn-barAn-inc-bound}
\sup_{0\le t\le T}\big|\bar{A}^N(t+\delta) - \bar{A}^N(t)\big| \le 2\sup_{0\le t\le T+\delta}\big|\bar{M}_A^N(t)\big|  + \lambda^* \delta,
\end{align}
which, since $\bar{M}_A^N(t)\to0$ locally uniformly in $t$, implies that, provided $\delta<\ep/\lambda^\ast$,
\begin{align*}
\limsup_{N\to\infty}\P\left(\sup_{0\le t\le T}\big|\bar{A}^N(t+\delta) - \bar{A}^N(t)\big|\ge\ep\right)=0\,.
\end{align*}
Thus we have shown that \eqref{eqn-Y-diff-p1} holds. 
\end{proof}

\begin{lemma} \label{lem-mfI-1-conv-p2}
Under the assumptions of Lemma \ref{lem-mfI-1-conv}, 
for $\ep>0$, as $\delta \to 0$, 
\begin{align}\label{eqn-Y-diff-p2}
& \limsup_N  \sup_{x\in [0,T']} \frac{1}{\delta}  \P \bigg(  \sup_{v \in [0,\delta]}\sup_{t \in [0,T]} \big|Y^N(t,x+v) - Y^N(t,x)\big| > \ep\bigg) \to 0.
\end{align}
\end{lemma}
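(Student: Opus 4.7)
The plan is to exploit a simplification that was not available in Lemma \ref{lem-mfI-1-conv-p1}: in the increment $Y^N(t,x+v)-Y^N(t,x)$ the variable $x$ only affects the range of summation, not the summands. First I would rewrite
\[
Y^N(t,x+v)-Y^N(t,x) = N^{-1} \sum_{i\,:\, (t-x-v)^+ < \tau^N_i \le (t-x)^+} \bigl(\mathbf{1}_{\tau^N_i+\eta_i>t} - F^c(t-\tau^N_i)\bigr),
\]
and, using that each summand is the difference of two quantities in $[0,1]$, obtain the crude but uniform bound
\[
|Y^N(t,x+v)-Y^N(t,x)| \le \bar{A}^N((t-x)^+)-\bar{A}^N((t-x-v)^+).
\]
This is the key step and it is entirely elementary; no second-moment computation is needed at this stage.

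Next, taking the supremum in $v\in[0,\delta]$ and $t\in[0,T]$ and using the monotonicity of $\bar{A}^N$, the right-hand side is at most $\sup_{0\le s\le T}[\bar{A}^N(s)-\bar{A}^N((s-\delta)^+)]$. Invoking exactly the same martingale decomposition leading to \eqref{eqn-barAn-inc-bound}, with $\bar{\Upsilon}^N\le\lambda^\ast$, I would bound this by $2\sup_{0\le t\le T}|\bar{M}_A^N(t)|+\lambda^\ast\delta$. Hence, for $\delta<\epsilon/(2\lambda^\ast)$,
\[
\P\!\left(\sup_{v\in[0,\delta]}\sup_{t\in[0,T]}|Y^N(t,x+v)-Y^N(t,x)|>\epsilon\right) \le \P\!\left(\sup_{0\le t\le T}|\bar{M}_A^N(t)|>\epsilon/4\right),
\]
and the right side is independent of $x$.

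Finally, Doob's $L^2$ inequality combined with the quadratic variation identity in Lemma \ref{lem-Martingale} gives
\[
\P\!\left(\sup_{0\le t\le T}|\bar{M}_A^N(t)|>\epsilon/4\right) \le \frac{16\,\E[\bar{M}_A^N(T)^2]}{\epsilon^2} \le \frac{16\lambda^\ast T}{\epsilon^2 N}.
\]
Thus for each fixed $\delta<\epsilon/(2\lambda^\ast)$,
\[
\limsup_{N\to\infty}\sup_{x\in[0,T']}\frac{1}{\delta}\,\P(\cdot>\epsilon) \le \limsup_{N\to\infty}\frac{16\lambda^\ast T}{\delta\,\epsilon^2 N}=0,
\]
from which \eqref{eqn-Y-diff-p2} follows by letting $\delta\to 0$. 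Conceptually the only obstacle is to realise that the $t$--dependence is already absorbed through the indicator and $F^c(t-\tau^N_i)$ in a way that does not need to be tracked under the $x$--increment, which is why no decomposition into a martingale-plus-compensator piece is necessary at the level of the summands here (unlike in Lemma \ref{lem-mfI-1-conv-p1}).
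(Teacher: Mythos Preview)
Your proof is correct and follows essentially the same approach as the paper: both observe that the $x$--increment of $Y^N$ is a sum over $\{i: (t-x-v)^+<\tau^N_i\le (t-x)^+\}$, bound it by $\bar{A}^N((t-x)^+)-\bar{A}^N((t-x-v)^+)$, and then invoke the martingale decomposition \eqref{eqn-barAn-inc-bound} together with $\bar{M}_A^N\to 0$. You simply spell out the Doob $L^2$ bound explicitly, whereas the paper refers back to the argument used for the last term in \eqref{eqn-Y-diff-p1-1}; a minor cosmetic point is that after applying \eqref{eqn-barAn-inc-bound} the supremum of $|\bar{M}_A^N|$ should be taken over $[0,T+\delta]$ rather than $[0,T]$.
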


\begin{proof}
Observe that 
\begin{align*}
 Y^N(t,x+v) - Y^N(t,x)
 = N^{-1} \sum_{i=A^N((t-x-v)^+)}^{A^N((t-x)^+)}\big(  \bone_{\tau^N_i + \eta_i >t} -F^c(t-\tau^N_i) \big),
\end{align*}
from which we obtain
\begin{align*}
&  \P \bigg(  \sup_{v \in [0,\delta]}\sup_{t \in [0,T]} \big|Y^N(t,x+v) - Y^N(t,x)\big| > \ep\bigg)  \\
& \le \P \bigg(  \sup_{v \in [0,\delta]}\sup_{t \in [0,T]} \big| \bar{A}^N((t-x)^+) - \bar{A}^N((t-x-v)^+) \big| > \ep\bigg)  \\
&\le  \P \bigg( \sup_{t \in [0,T]} \big|\bar{A}^N((t-x)^+) - \bar{A}^N((t-x-\delta)^+) \big| > \ep\bigg)\,. 
\end{align*}
Then the claim follows from the same argument as the one used to treat the last term in \eqref{eqn-Y-diff-p1-1} in the end of the proof of the previous lemma. 
\end{proof}

\subsection{Convergence of the aggregate infectivity process}

 Recall $ \mathcal{I}^N$ in \eqref{eqn-cI-n}, and let  $\overline{\mathcal{I}}^N := N^{-1} \mathcal{I}^N$. 
  Define 
  \begin{align} \label{eqn-wt-In}
 \wt{\mathcal{I}}^N(t) 
 &:= N^{-1} \sum_{j=1}^{I^N(0)}   \bar\lambda (\tilde{\tau}^N_{j,0}+t)   
+ N^{-1} \sum_{i=1}^{A^N(t)}  \bar\lambda(t-\tau^N_i), \quad t \ge 0.
 \end{align}

 \begin{lemma} \label{lem-mcI-diff0}
 Under Assumptions \ref{AS-FLLN-Initial} and \ref{AS-lambda},  along a convergent subsequence of $\bar{A}^N$ which converges weakly to $\bar{A}$,  we have
in probability, 
 $$
 \overline{\mathcal{I}}^N - \widetilde{\mathcal{I}}^N  \to 0 \qinq D \qasq N \to \infty.
  $$
 \end{lemma}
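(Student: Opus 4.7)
The plan is to decompose the difference as $\overline{\mathcal{I}}^N - \widetilde{\mathcal{I}}^N = Y^N_0 + Y^N_1$, where
\[ Y^N_0(t) = N^{-1}\sum_{j=1}^{I^N(0)}\bigl[\lambda^0_j(\tilde\tau^N_{j,0}+t) - \bar\lambda(\tilde\tau^N_{j,0}+t)\bigr],\ \ Y^N_1(t) = N^{-1}\sum_{i=1}^{A^N(t)}\bigl[\lambda_i(t-\tau^N_i) - \bar\lambda(t-\tau^N_i)\bigr],\]
and show that each piece converges to $0$ uniformly on every compact interval along the given subsequence. Since the candidate limit is the zero function, which is continuous, convergence in $D$ is equivalent to local uniform convergence, so it suffices to prove $\sup_{t\in[0,T]}|Y^N_0(t)+Y^N_1(t)| \to 0$ in probability for every $T>0$.

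The first step is a pointwise $L^2$ bound. For $Y^N_0$, conditioning on $\sigma\{I^N(0), \tilde\tau^N_{j,0}\}$, the $\lambda^0_j$'s are i.i.d. copies of $\lambda$ independent of those times, so the summands are conditionally centred and independent and $\E|Y^N_0(t)|^2 \le 4(\lambda^\ast)^2\E[\bar I^N(0)]/N \lesssim N^{-1}$. For $Y^N_1$, I would enlarge the probability space with a PRM $\mathcal N$ on $\RR_+\times\RR_+\times D$ with mean measure $ds\,du\,\mathbb P_\lambda(d\omega)$, so that
\[ A^N(t)=\int_0^t\!\!\int_0^\infty\!\!\int_D \bone_{u\le\Upsilon^N(s-)}\,\mathcal N(ds,du,d\omega),\]
and the sums $\sum_i\lambda_i(t-\tau^N_i)$ and $\sum_i\bar\lambda(t-\tau^N_i)$ become integrals against $\mathcal N$ with integrands $\omega(t-s)$ and $\bar\lambda(t-s)$ respectively. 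The integrand $\omega(t-s)-\bar\lambda(t-s)$ has zero mean under $\mathbb P_\lambda$, so $Y^N_1(t)$ may be rewritten as an integral against the compensated PRM $\widetilde{\mathcal N}$; the PRM isometry combined with $\Upsilon^N\le N\lambda^\ast$ then gives $\E|Y^N_1(t)|^2\le (\lambda^\ast)^3 t/N$.

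To upgrade the pointwise bounds to uniform convergence, I would discretize with a grid $t_k = kT/L$ and write
\[ \sup_{t\in[0,T]}|Y^N(t)|\le \max_{0\le k\le L}|Y^N(t_k)|+\max_{0\le k<L}\sup_{t\in[t_k,t_{k+1}]}|Y^N(t)-Y^N(t_k)|.\]
The first maximum vanishes in probability for each fixed $L$ by Markov and the $L^2$ bound above. For the oscillation, I would decompose $Y^N_1(t)-Y^N_1(t_k)$ into (i) the contribution from atoms of $\mathcal N$ in $(t_k,t]$, uniformly bounded by $2\lambda^\ast(\bar A^N(t_{k+1})-\bar A^N(t_k))$ and therefore by $2\lambda^\ast(\bar A(t_{k+1})-\bar A(t_k))+o_\P(1)$ thanks to Lemma \ref{lem-tight} and \eqref{eqn-barAn-inc-bound}; and (ii) the integrand update $\omega(t-s)-\omega(t_k-s)$ on $[0,t_k]$, which by Assumption \ref{AS-lambda} is bounded by $\varphi(T/L)$ on continuity pieces plus a jump correction of size $2\lambda^\ast$ times the indicator that some $\zeta^\ell$ of the mark falls in $[t_k-s, t-s]$, whose aggregate mass is $O(T/L)$ in the limit by a law of large numbers for the marked PRM. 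The same analysis, but with the simpler ``frozen'' initial times $\tilde\tau^N_{j,0}$ in place of the infection times, treats $Y^N_0$. Letting $N\to\infty$ first and then $L\to\infty$ yields the claim.

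The main obstacle is the oscillation control of $Y^N_1$: because the integrand $\omega(t-s)-\bar\lambda(t-s)$ depends on $t$, the process $Y^N_1$ is \emph{not} a martingale in $t$ and Doob's inequality cannot be applied directly; instead one must combine the grid discretization with careful bookkeeping of the random jump times $\zeta^\ell$ of the individual marks, which can in principle contribute oscillations of order $\lambda^\ast$ and must be averaged out using the empirical law of large numbers for the marked Poisson measure together with the finiteness of $k$ and $\lambda^\ast$ guaranteed by Assumption \ref{AS-lambda}.
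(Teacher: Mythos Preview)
Your proposal is correct and follows essentially the same approach as the paper: the same decomposition into initially- and newly-infected pieces, the same $L^2$ pointwise bound via conditional independence, and the same oscillation control exploiting the piecewise-continuous structure of $\lambda$ in Assumption~\ref{AS-lambda} (the modulus $\varphi$ on continuity intervals plus indicator corrections for the random jump times $\zeta^\ell$, whose aggregate contribution is controlled by a law of large numbers). The paper packages the grid argument via the Corollary on p.~83 of \cite{billingsley1999convergence} and defers the newly-infected term $Y^N_1$ to Lemma~4.6 of \cite{FPP2020b}, whereas your marked-PRM representation makes that piece self-contained; one minor imprecision is that the $O(T/L)$ rate for the $\zeta^\ell$-correction holds for $Y^N_1$ (where one integrates against $\bar\Upsilon(s)\,ds$) but for $Y^N_0$ the analogous integral against $\bar{\mfI}(0,dy)$ is in general only $o(1)$ uniformly over grid points, which is still sufficient.
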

 
 \begin{proof}
We write 
 $$
 \overline{\mathcal{I}}^N(t) - \widetilde{\mathcal{I}}^N (t) = \overline\Xi^N_0(t) + \overline\Xi^N_1(t), 
 $$
 where 
  \begin{align*} 
 \overline\Xi^N_0(t) &= N^{-1} \sum_{j=1}^{I^N(0)} \big( \lambda_j^0 (\tilde{\tau}^N_{j,0}+t) -  \bar\lambda (\tilde{\tau}^N_{j,0}+t) \big) \,\,, \\
\overline\Xi^N_1(t) 
& =   N^{-1} \sum_{i=1}^{A^N(t)} \big( \lambda_i(t-\tau^N_i) - \bar\lambda(t-\tau^N_i) \big)   \,\,. 
 \end{align*}
 We first consider $\overline\Xi^N_0(t)$.   
 For each fixed $t$, by conditioning on $\sigma\{I^N(0,y): 0\le y \le \bar{x}\} = \sigma\{\tilde{\tau}^N_{j,0}, j=1,\dots, I^N(0)\}$,  we obtain 
 \begin{align*}
 \E \big[\big( \overline\Xi^N_0(t) \big)^2\big] 
 &= N^{-1} \E\bigg[  \int_0^{\bar{x}} v(y+t) d \bar\mfI^N(0,y) \bigg] \to 0 \qasq N \to \infty.
 \end{align*}
We then have for $t,u>0$,
\begin{align*}
\big| \overline\Xi^N_0(t+u)  -  \overline\Xi^N_0(t) \big|
 &\le N^{-1} \sum_{j=1}^{I^N(0)} \Big| \lambda_j^0 (\tilde{\tau}^N_{j,0}+t+u) - \lambda_j^0 (\tilde{\tau}^N_{j,0}+t)\Big| \\
 & \quad + N^{-1} \sum_{j=1}^{I^N(0)} \Big| \bar\lambda (\tilde{\tau}^N_{j,0}+t+u) - \bar\lambda (\tilde{\tau}^N_{j,0}+t)\Big| \\
& =:\Delta_0^{N,1}(t,u) + \Delta_0^{N,2}(t,u)\,. 
\end{align*}
Then by Assumption \ref{AS-lambda}, writing $\lambda^0_j(t) = \sum_{\ell=1}^k\lambda^{0,\ell}_j(t) \bone_{[\zeta_j^{\ell-1}, \zeta_j^\ell)}(t)$,  we have
\begin{align}\label{eqn-Delta-0N1-p1}
\Delta_0^{N,1}(t,u) &= N^{-1} \sum_{j=1}^{I^N(0)} \bigg| \sum_{\ell=1}^k \lambda_j^{0,\ell} (\tilde{\tau}^N_{j,0}+t+u) \bone_{[\zeta_j^{\ell-1}, \zeta_j^\ell)}(\tilde{\tau}^N_{j,0}+t+u)  \non \\
& \qquad \qquad \qquad - \sum_{\ell=1}^k \lambda_j^{0,\ell} (\tilde{\tau}^N_{j,0}+t) \bone_{[\zeta_j^{\ell-1}, \zeta_j^\ell)}(\tilde{\tau}^N_{j,0}+t) \bigg| \non \\
&\le  N^{-1} \sum_{j=1}^{I^N(0)}  \sum_{\ell=1}^k |\lambda_j^{0,\ell} (\tilde{\tau}^N_{j,0}+t+u)  -  \lambda_j^{0,\ell} (\tilde{\tau}^N_{j,0}+t) |  \bone_{\zeta_j^{\ell-1} \le   \tilde{\tau}^N_{j,0}+t \le \tilde{\tau}^N_{j,0}+t+u \le \zeta_j^\ell} \non\\
& \quad + \lambda^* N^{-1} \sum_{j=1}^{I^N(0)}  \sum_{\ell=1}^k \bone_{   \tilde{\tau}^N_{j,0}+t \le \zeta_j^\ell \le \tilde{\tau}^N_{j,0}+t+u } \non \\
& \le \varphi(u) \bar{I}^N(0) + \lambda^* N^{-1}  \sum_{\ell=1}^k \sum_{j=1}^{I^N(0)}  \bone_{   \tilde{\tau}^N_{j,0}+t \le \zeta_j^\ell \le \tilde{\tau}^N_{j,0}+t+u } \,\,.
\end{align}
Both terms on the right hand side are increasing in $u$, and thus, we have
\begin{align*}
\sup_{0 \le u \le \delta} \Delta_0^{N,1}(t,u)  \le  \varphi(\delta) \bar{I}^N(0) + \lambda^* N^{-1}  \sum_{\ell=1}^k \sum_{j=1}^{I^N(0)}  \bone_{   \tilde{\tau}^N_{j,0}+t \le \zeta_j^\ell \le \tilde{\tau}^N_{j,0}+t+\delta } \,\,.
\end{align*}
Here for the second term, we have
\begin{align*}
& N^{-1}  \sum_{\ell=1}^k \sum_{j=1}^{I^N(0)}  \bone_{\tilde{\tau}^N_{j,0}+t \le \zeta_j^\ell \le \tilde{\tau}^N_{j,0}+t+\delta } \\
&=N^{-1}  \sum_{\ell=1}^k \sum_{j=1}^{I^N(0)}
\bigg[\bone_{\tilde{\tau}^N_{j,0}+t \le \zeta_j^\ell \le \tilde{\tau}^N_{j,0}+t+\delta }
-\big(F_\ell( \tilde{\tau}^N_{j,0}+t+\delta) - F_\ell( \tilde{\tau}^N_{j,0}+t)\big) \big) \bigg]\\
&\quad+N^{-1}  \sum_{\ell=1}^k \sum_{j=1}^{I^N(0)}\big(F_\ell( \tilde{\tau}^N_{j,0}+t+\delta) - F_\ell( \tilde{\tau}^N_{j,0}+t)\big) \big),
\end{align*}
hence
\begin{align}\label{decomp+}
\P&\left(N^{-1}  \sum_{\ell=1}^k \sum_{j=1}^{I^N(0)}  \bone_{\tilde{\tau}^N_{j,0}+t \le \zeta_j^\ell \le \tilde{\tau}^N_{j,0}+t+\delta }>\ep\right)\non\\
&\le\sum_{\ell=1}^k\P\left(N^{-1}   \sum_{j=1}^{I^N(0)}
\bigg[\bone_{\tilde{\tau}^N_{j,0}+t \le \zeta_j^\ell \le \tilde{\tau}^N_{j,0}+t+\delta }
-\big(F_\ell( \tilde{\tau}^N_{j,0}+t+\delta) - F_\ell( \tilde{\tau}^N_{j,0}+t)\big) \big) \bigg]>\ep/2k\right)\non\\
&\quad+ \sum_{\ell=1}^k\P\left(N^{-1}  \sum_{j=1}^{I^N(0)}\big(F_\ell( \tilde{\tau}^N_{j,0}+t+\delta) - F_\ell( \tilde{\tau}^N_{j,0}+t)\big) \big)>\ep/2k\right)\,.
\end{align}
The first term on the right of \eqref{decomp+} tends to $0$ as $N\to\infty$, since
by conditioning on $\sigma\{\mfI^N(0,y): 0\le y \le \bar{x}\} = \sigma\{\tilde{\tau}^N_{j,0}, j=1,\dots, I^N(0)\}$, and since the $\zeta_j^\ell$'s are mutually independent and globally independent of the $\tilde{\tau}^N_{j,0}$'s,  we obtain 
\begin{align*}
&  \E \bigg[ \bigg( N^{-1}  \sum_{j=1}^{I^N(0)}  \big( \bone_{   \tilde{\tau}^N_{j,0}+t \le \zeta_j^\ell \le \tilde{\tau}^N_{j,0}+t+\delta } - \big(F_\ell( \tilde{\tau}^N_{j,0}+t+\delta) - F_\ell( \tilde{\tau}^N_{j,0}+t)\big) \big) \bigg)^2  \bigg]  \\
& = \E \bigg[ N^{-1} \int_0^{\bar{x}} \big(F_\ell(y+t+\delta) - F_\ell( y+t)\big) \big)  \big[1- \big(F_\ell(y+t+\delta) - F_\ell( y+t)\big) \big) \big]  \bar\mfI^N(0,dy)  \bigg]\,.
\end{align*}
The second term on the right of \eqref{decomp+} equals
\[ \sum_{\ell=1}^k\P\left( \int_0^{\bar{x}}   \big(F_\ell( y+t+\delta) - F_\ell(y+t)\big) \big)   \bar\mfI^N(0,dy)>\ep/2k\right),\]
whose limsup as $N\to\infty$ is bounded from above by
\[ \sum_{\ell=1}^k{\bf1}\left\{ \int_0^{\bar{x}}   \big(F_\ell( y+t+\delta) - F_\ell(y+t)\big) \big)   \bar\mfI(0,dy)\ge\ep/2k\right\}\,.\]
Since for each $1\le \ell\le k$, 
\[ \delta\mapsto\int_0^{\bar{x}}   \big(F_\ell( y+t+\delta) - F_\ell(y+t)\big) \big)   \bar\mfI(0,dy)\]
is continuous and equals $0$ at $\delta=0$, for any $\ep>0$, there exists $\delta>0$ small enough such that
the above quantity vanishes.
Thus, we have shown that
\begin{align} \label{eqn-Delta-0N1-p3}
\limsup_{N\to\infty}\sup_{t\in [0,T]}  \frac{1}{\delta}\P \left( \sup_{0 \le u \le \delta} \Delta_0^{N,1}(t,u) > \ep/3\right)  \to 0, \qasq \delta \to 0. 
\end{align}

Next, consider $\Delta_0^{N,2}(t,u)$, which is $\Delta_0^{N,1}(t,u)$, with the $j$--th term in the absolute value being replaced by its conditional expectation given $\tilde{\tau}^N_{j,0}$. The computations which led above to 
\eqref{eqn-Delta-0N1-p1} give
\begin{align*}
\sup_{0 \le u \le \delta} \Delta_0^{N,2}(t,u)  \le  \varphi(\delta) \bar{I}^N(0) + \lambda^* N^{-1}  \sum_{\ell=1}^k \sum_{j=1}^{I^N(0)}  \big(F_\ell( \tilde{\tau}^N_{j,0}+t+\delta) - F_\ell( \tilde{\tau}^N_{j,0}+t)\big) \,\,.
\end{align*}
So the same arguments as those used above yield that \eqref{eqn-Delta-0N1-p3} holds with 
$\Delta_0^{N,1}(t,u)$ replaced by $\Delta_0^{N,2}(t,u)$.

Thus we have shown that in probability,
$
\overline\Xi^N_0 \to 0$ in $D$ as $ N \to \infty$.
The convergence $\overline\Xi^N_1\to 0$ in $D$ in probability follows from the proof of Lemma 4.6 in \cite{FPP2020b}. In fact, the above proof of $
\overline\Xi^N_0 \to 0$ can be adapted to that proof by observing the similar roles of $A^N$ and $\mfI^N(0,\cdot)$.
 This completes the proof. 
\end{proof}

 \begin{lemma} \label{lem-sI-conv}
 Under Assumptions \ref{AS-FLLN-Initial} and \ref{AS-lambda},  along a convergent subsequence of $\bar{A}^N$ which converges weakly to $\bar{A}$, 
 \begin{equation}
 \overline{\sI}^N \RA \wt{\sI} \qinq D \qasq N \to \infty,
 \end{equation}
where $ \wt{\sI}(t)$ is given by
\begin{align}\label{limI}
 \wt{\sI}(t) 
  &= \int_0^{\bar{x}} \bar\lambda(y+t)  \bar{\mfI}(0,dy)  + \int_0^t  \bar\lambda(t-s)   d\bar{A}(s)\,, \quad t\ge 0. 
\end{align}
 \end{lemma}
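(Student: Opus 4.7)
My plan is to use Lemma \ref{lem-mcI-diff0} to reduce the statement to showing $\wt{\sI}^N \Rightarrow \wt{\sI}$ in $D$, since that lemma already provides $\overline{\sI}^N - \wt{\sI}^N \to 0$ in $D$ in probability. Rewriting
$$\wt{\sI}^N(t) = \int_0^{\bar{x}} \bar\lambda(y+t)\, \bar{\mfI}^N(0,dy) + \int_0^t \bar\lambda(t-s)\, d\bar{A}^N(s) =: \wt{\sI}^{N,1}(t) + \wt{\sI}^{N,2}(t),$$
the task splits into establishing finite-dimensional convergence and a uniform modulus-of-continuity estimate for each piece.

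For the first step, fix $t \ge 0$. The limits satisfy the hypotheses of Lemma \ref{le:Portmanteau}: $\bar{\mfI}(0,\cdot)$ is continuous by Assumption \ref{AS-FLLN-Initial}, while $\bar{A}(s) = \int_0^s\bar\Upsilon(u)du$ is Lipschitz by \eqref{eqn-An-conv1} and \eqref{eqn-barPhi-bound}. Along the given subsequence $\bar{A}^N \to \bar{A}$ and $\bar{\mfI}^N(0,\cdot) \to \bar{\mfI}(0,\cdot)$ locally uniformly, in probability (using Skorokhod representation if desired), so Lemma \ref{le:Portmanteau} applied to the bounded c\`adl\`ag integrands $y\mapsto\bar\lambda(y+t)$ and $s\mapsto\bar\lambda(t-s)$ yields $\wt{\sI}^N(t) \to \wt{\sI}(t)$ in probability. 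Joint convergence at finitely many times $t_1<\cdots<t_m$ follows by the same argument applied componentwise.

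For the oscillation bound, I would use the piecewise decomposition in Assumption \ref{AS-lambda} to write
$$|\bar\lambda(r+u) - \bar\lambda(r)| \le \varphi(u) + \lambda^*\sum_{\ell=1}^k\big(F_\ell(r+u) - F_\ell(r)\big),$$
where $F_\ell$ denotes the c.d.f. of $\zeta^\ell$. Integrating this estimate against $\bar{\mfI}^N(0,dy)$ and, for $\wt{\sI}^{N,2}$, against $d\bar{A}^N(s)$ together with the already-established uniform increment bound \eqref{eqn-barAn-inc-bound} for $\bar{A}^N$, controls $\sup_{u\in[0,\delta]}|\wt{\sI}^N(t+u) - \wt{\sI}^N(t)|$ by a quantity that vanishes as $\delta\to 0$, locally uniformly in $t$.

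Combined with finite-dimensional convergence, and with the continuity of the limit $\wt{\sI}$ (inherited from the absolute continuity of $\bar{A}$ and the continuity of $\bar{\mfI}(0,\cdot)$), this upgrades to $\wt{\sI}^N \to \wt{\sI}$ locally uniformly in probability, hence weak convergence in $D$. The main technical point will be handling the possible discontinuities of $\bar\lambda$: this is precisely what the piecewise decomposition in Assumption \ref{AS-lambda} is designed to control, and the quantitative estimates closely parallel those already used in the proof of Lemma \ref{lem-mcI-diff0}, so no essentially new difficulty arises.
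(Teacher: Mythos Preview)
Your proposal is correct and follows essentially the same approach as the paper: reduce via Lemma \ref{lem-mcI-diff0} to $\wt{\sI}^N\Rightarrow\wt{\sI}$, get pointwise convergence from Lemma \ref{le:Portmanteau}, and then establish a modulus-of-continuity estimate for tightness. The only minor difference is that the paper bounds the oscillation of $\bar\lambda$ through the abstract quantity $G_\delta(s):=\sup_{0\le u\le\delta}|\bar\lambda(s+u)-\bar\lambda(s)|$ and invokes dominated convergence (using that $\bar\lambda$ is c\`adl\`ag, hence $G_\delta\to0$ a.e.), whereas you unpack this via the explicit piecewise structure from Assumption \ref{AS-lambda}; both routes yield the same conclusion, and for the $d\bar{A}^N$ integral the paper likewise splits into the martingale part (which vanishes) and the absolutely continuous part bounded by $\lambda^\ast\int_0^t G_\delta(t-s)\,ds$.
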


\begin{proof}
By the above lemma, it suffices to show that 
 \begin{equation}
 \wt{\sI}^N \RA \wt{\sI} \qinq D \qasq N \to \infty. 
 \end{equation}
The expression of $ \wt{\sI}^N$ in  \eqref{eqn-wt-In} can be rewritten as 
  \begin{align} \label{eqn-wt-In-2}
 \wt{\mathcal{I}}^N(t) 
&= \int_0^{\bar{x}} \bar\lambda(y+t) \bar{\mfI}^N(0,dy) + \int_0^t \bar\lambda(t-s) d\bar{A}^N(s). 
 \end{align}
  It follows from Lemma \ref{le:Portmanteau} that for any $t>0$, as $N\to\infty$,  $\wt{\sI}^N(t) \RA \wt{\sI}(t)$.
 It remains to show that the sequence $\wt{\sI}^N$ is tight in $D$. For that purpose, exploiting the Corollary on page 83 of  \cite{billingsley1999convergence}, it suffices to show that for any 
  $\ep>0$, 
  \begin{align}
   \lim_{\delta\to0}\limsup_N\frac{1}{\delta} \P\left(\sup_{0\le u\le \delta}\left|\int_0^{\bar{x}} \bar\lambda(y+t+u) \bar{\mfI}^N(0,dy) -\int_0^{\bar{x}} \bar\lambda(y+t) \bar{\mfI}^N(0,dy) \right|>\ep\right)=0,\label{estimaccroist1}
   \\
   \lim_{\delta\to0}\limsup_N\frac{1}{\delta} \P\left(\sup_{0\le u\le \delta}
   \left|  \int_0^{t+u} \bar\lambda(t+u-s) d\bar{A}^N(s)- \int_0^t \bar\lambda(t-s) d\bar{A}^N(s)    \right|>\ep\right)=0\,. \label{estimaccroist2}
   \end{align}
  
\eqref{estimaccroist1} follows from the fact that, with $G_\delta(s):=\sup_{0\le u\le \delta}|\bar\lambda(s+u)-\bar\lambda(s)|$,
\begin{align*}
\limsup_N\P\left(\sup_{0\le u\le \delta}\left|\int_0^{\bar{x}} (\bar\lambda(y+t+u)-\bar\lambda(y+t)) \bar{\mfI}^N(0,dy)   \right|>\ep\right) \le{\bf1}\left\{\int_0^{\bar{x}}G_\delta(y+t)\bar{\mfI}(0,dy)>\ep\right\}\,.
\end{align*}
 Now $\bar{\mfI}(0,dy)$ a.e., $G_\delta(y+t)\to0$, and since $0\le G_\delta(y+t)\le\lambda^\ast$, it follows from Lebesgue's dominated convergence that $\int_0^{\bar{x}}G_\delta(y+t)\bar{\mfI}(0,dy)\to0$, as $\delta\to0$,hence for $\delta>0$ small enough, this quantity is less than $\ep$, and the indicator vanishes.
 
 It remains to establish \eqref{estimaccroist2}. We have
 \begin{align*}
 \int_0^{t+u} \bar\lambda(t+u-s) d\bar{A}^N(s)- \int_0^t \bar\lambda(t-s) d\bar{A}^N(s) &=
 \int_t^{t+u} \bar\lambda(t+u-s) d\bar{A}^N(s)\\&\quad+\int_0^t[\bar\lambda(t+u-s) -\bar\lambda(t-s) ]d\bar{A}^N(s),
 \end{align*}
 hence
  \begin{align*}
& \sup_{0\le u\le\delta}\left|\int_0^{t+u} \bar\lambda(t+u-s) d\bar{A}^N(s)- \int_0^t \bar\lambda(t-s) d\bar{A}^N(s)\right| \\
 &\le(\lambda^\ast)^2\delta+\lambda^\ast\int_0^tG_\delta(t-s)ds +\lambda^\ast
 \big|\bar{M}_A^N(t+\delta)-\bar{M}_A^N(t)\big|+\left|
 \int_0^tG_\delta(t-s)d\bar{M}^N_A(s)\right|
 \end{align*}
 The result follows since the sum of the two first terms on the right are less than $\ep/2$ for $\delta>0$ small enough, while the two last terms tend to $0$, as $N\to\infty$.
 \end{proof}
 
 \subsection{Completing the proof of Theorem \ref{thm-FLLN}}
 
By Lemmas \ref{lem-barIN0-conv} and \ref{lem-mfI-1-conv}, we have that, along a subsequence,
\begin{align*}
\bar\mfI^N(t,x) = \bar\mfI^N_0(t,x) + \bar\mfI^N_1(t,x) \Rightarrow \wt{\mfI}(t,x) = \bar\mfI_0(t,x) + \bar\mfI_1(t,x) \in D_D \qasq N \to \infty,
\end{align*}
where $\bar\mfI_0(t,x)$ and $\bar\mfI_1(t,x)$ are given in \eqref{eqn-bar-mfI-0-rep1} and \eqref{eqn-bar-mfI-1-rep1}, respectively. Also recall that $\bar{S}^N=\bar{S}^N(0) - \bar{A}^N$ by \eqref{eqn-Sn-rep}.
We need to show the joint convergence
$$
(\bar{S}^N, \bar\mfI^N, \overline{\sI}^N) \Rightarrow (\bar{S}, \wt{\mfI}, \wt{\sI}) \qinq D\times D_D\times D \qasq N \to \infty.
$$
or equivalently, 
\begin{equation}\label{jointconv}
(\bar{A}^N, \bar\mfI^N, \overline{\sI}^N) \Rightarrow  (\bar{A}, \wt{\mfI}, \wt{\sI}) \qinq D\times D_D \times D \qasq N \to \infty.
\end{equation}
 Indeed, first thanks to Lemma \ref{lem-mcI-diff0}, we can replace $\overline{\sI}^N$ by 
$\wt{\sI}^N$. 
Next we have the decompositions 
\begin{align*}
\bar\mfI^N&=\bar\mfI_0^N+\bar\mfI_1^N,\\
\wt{\sI}^N&=\wt{\sI}_0^N+\wt{\sI}_1^N,
\end{align*}
where $\wt{\sI}_0^N$ and $\wt{\sI}_1^N$ are respectively the first and the second term
on the right of the identity \eqref{eqn-wt-In-2}. 
By the independence of the quantities associated with initially and newly infected individuals, it suffices to prove the joint convergence of the processes $(\bar\mfI_0^N, \wt{\sI}_0^N)$
and that of the processes $(\bar{A}^N,\bar\mfI_1^N,\wt{\sI}_1^N)$ separately. 
We have proved in Lemma \ref{lem-barIN0-conv} that $\bar\mfI_0^N\to\bar\mfI_0$ in $D_D$ in probability, and it follows from the arguments in the proof of Lemma \ref{lem-sI-conv} that 
$\wt{\sI}_0^N\to\wt{\sI}_0$ in $D$ in probability, where $\wt{\sI}_0$ is the first term on the right of the identity \eqref{limI}. Hence, the joint convergence $(\bar\mfI_0^N, \wt{\sI}_0^N) \to(\bar\mfI_0,\wt{\sI}_0)$ in $D^2$ in probability is immediate.

Exploiting again \eqref{eqn-Yn-conv-0},  we see that 
 the joint convergence  $(\bar{A}^N,\bar\mfI_1^N,\wt{\sI}_1^N)\Rightarrow  (\bar{A}, \bar\mfI_1, \wt{\sI}_1)$ 
 will be a consequence of
\begin{equation}\label{jointconv*}
(\bar{A}^N, \breve\mfI^N_1, \wt{\sI}_1^N) \Rightarrow  (\bar{A}, \bar\mfI_1, \wt{\sI}_1) \qinq D\times D_D \times D \qasq N \to \infty.
\end{equation}
where  $\wt{\sI}_1$ denotes the second term on the right of the identity  \eqref{limI}.
Since $\breve{\mfI}^N_1(t,x)  = \int_{(t-x)^+}^t F^c(t-s) d \bar{A}^N(s)$ and 
$\wt{\sI}_1^N(t)=\int_0^t \bar\lambda(t-s) d\bar{A}^N(s)$,
the joint finite dimensional convergence is a consequence of the continuous mapping theorem and 
Lemma \ref{le:Portmanteau}. Hence the result follows from tightness. We have proved the joint convergence property in 
\eqref{jointconv}.

Recall the expression of $\overline\Upsilon^N(t) =  \bar{S}^N(t)\overline{\mathcal{I}} ^N(t)$. 
Applying the continuous mapping theorem again, we obtain 
  that
  $$
\overline{\Upsilon}^N(t) \Rightarrow \bar\Upsilon(t) = \bar{S}(t) \wt{\sI}(t)  \qinq D \qasq N \to \infty. 
$$
Thus by \eqref{eqn-An-conv1}, 
we conclude that 
$$
\bar{A}^N \Rightarrow \bar{A} = \int_0^{\cdot} \bar\Upsilon(s)ds =  \int_0^{\cdot}  \bar{S}(s) \wt{\sI}(s) ds \qinq D \qasq N \to \infty. 
$$
Therefore, the limit $(\bar{S}, \wt{\sI})$ satisfies the set of integral equations in \eqref{eqn-barS}, \eqref{eqn-overline-cal-I-2}
and the limit $\wt{\sI}$ coincides with $\overline{\sI}$ defined by \eqref{eqn-overline-cal-I-2}. Then, the limit $\wt{\mfI}$ coincides with $\bar{\mfI}$ in \eqref{eqn-barI}. 
The limits $\bar{I}$ in \eqref{eqn-barIt} and $\bar{R}$ in \eqref{eqn-barR} then follow immediately. 
The set of integral equations has a unique deterministic solution. Indeed, it is easy to see that the system of equations \eqref{eqn-barS} and \eqref{eqn-overline-cal-I-2} (together with the first part of \eqref{eqn-bar-Upsilon}) has a unique solution $(\bar{S}, \overline{\sI})$, given the initial values  $\bar\mfI(0,\cdot)$.
The other processes $\bar\mfI, \bar{I}, \bar{R}$ are then uniquely determined.  
Hence the whole sequence converges in probability.

From \eqref{eqn-barI}, 
we deduce that for all $t>0$, 
\begin{align} 
\bar{\mfI}_x(t,0) &= \lim_{x\to 0} \frac{\bar\mfI(t,x) - \bar\mfI(t,0)}{x} = \lim_{x\to 0} \frac{\bar\mfI(t,x)}{x}  = \bar\Upsilon(t). \non 
\end{align}
This prove the second equality in \eqref{eqn-bar-Upsilon}. 

It remains to prove the continuity. The continuity in $t$ of $\bar{S}(t)$ is clear. Let us prove that $t\mapsto\overline\sI(t)$ is continuous. Since $\lambda_i$ is c\`adl\`ag and bounded, it is easily checked that
$t\mapsto\bar\lambda(t)=\E[\lambda(t)]$ is also c\`adl\`ag. In fact it is continuous if all the $F_\ell$'s for $1\le \ell\le k$ are continuous. The points of discontinuity of $\bar\lambda(t)$ are the points where one of the laws of the $\zeta^\ell$ has some mass. The set of those points is at most countable. Consequently, if $t_n\to t$, the set of $y$'s where $\bar\lambda(t_n+y)$ may not converge to $\bar\lambda(t+y)$ is at most countable, and this is a set of zero
$\bar\mfI(0,dy)$ measure. Since moreover $0\le\bar\lambda(t_n+y)\le\lambda^\ast$, $t\to\int_0^{\bar{x}}\bar\lambda(y+t)\bar\mfI(0,dy)$ is continuous. Let us now consider the second term in \eqref{eqn-overline-cal-I-2}.
We first note that since $\bar\lambda(t-s)\le\lambda^\ast$ and $\bar{S}(t)\le1$, it follows from \eqref{eqn-overline-cal-I-2}, \eqref{eqn-bar-Upsilon} and Gronwall's Lemma that $\overline\sI(t)\le\lambda^\ast e^{\lambda^\ast t}$. 
Let $t_n\to t$. We have
\begin{align*}
& \left|\int_0^t\bar\lambda(t-s)\bar\Upsilon(s)ds-\int_0^{t_n}\bar\lambda(t_n-s)\bar\Upsilon(s)ds\right|
\\
& \le \int_0^t|\bar\lambda(t-s)-\bar\lambda(t_n-s)|\Upsilon(s)ds+(\lambda^\ast)^2e^{\lambda^\ast (t\vee t_n)}|t-t_n|.\end{align*}
Clearly the above right hand side tends to $0$, as $n\to\infty$. A similar argument shows that $\bar{R}$ and $\bar{I}$ are continuous, and that $(t,x)\mapsto\bar\mfI(t,x)$ is continuous. Finally, since the convergence holds in 
$D\times D\times D_D\times D$ and the limits are continuous, the convergence is locally uniform in $t$ and $x$. This completes the proof of Theorem \ref{thm-FLLN}.

\section{Appendix: Proof of Theorem \ref{thm-DD-conv0}}\label{sec:App}

Given $\delta>0$, we define the two sets 
\begin{align*}
\Gamma_{T,\delta}&:=\Big\{0,\delta,2\delta,\ldots,\lfloor\frac{T}{\delta}\rfloor\delta\Big\},\\
\Gamma_{S,\delta}&:=\Big\{0,\delta,2\delta,\ldots,\lfloor\frac{S}{\delta}\rfloor\delta \Big\}\,.
\end{align*}
For any $t\in[0,T]$, we define $\gamma_{T,\delta}(t)$ to be the element of $\Gamma_{T,\delta}$ such that
$\gamma_{T,\delta}(t)\le t<\gamma_{T,\delta}(t)+\delta$, and for any $s\in[0,S]$, we define $\gamma_{S,\delta}(s)$ to be the element of $\Gamma_{S,\delta}$ such that
$\gamma_{S,\delta}(s)\le s<\gamma_{S,\delta}(s)+\delta$.

Let $(t,s)$ and $(t',s')$ be two points in $[0,T]\times[0,S]$ such that $|t-t'|\vee|s-s'|\le\delta$. We have
\begin{align*}
X^N(t,s)-X^N(t',s')&=X^N(t,s)-X^N(t,\gamma_{S,\delta}(s))+X^N(t,\gamma_{S,\delta}(s))-
X^N(\gamma_{T,\delta}(t),\gamma_{S,\delta}(s))\\
&\quad+X^N(\gamma_{T,\delta}(t),\gamma_{S,\delta}(s))
-X^N(\gamma_{T,\delta}(t'),\gamma_{S,\delta}(s))\\&\quad+X^N(\gamma_{T,\delta}(t'),\gamma_{S,\delta}(s))
-X^N(\gamma_{T,\delta}(t'),\gamma_{S,\delta}(s'))\\
&\quad+X^N(\gamma_{T,\delta}(t'),\gamma_{S,\delta}(s'))
-X^N(t',\gamma_{S,\delta}(s'))+X^N(t',\gamma_{S,\delta}(s'))-X^N(t',s'). 
\end{align*}
Hence
\begin{align*}
\P&\left(\sup_{0\le t,t'\le T; 0\le s,s'\le S;|t-t'|\vee|s-s'|\le\delta}|X^N(t,s)-X^N(t',s')|>\epsilon \right)\\&\le 
3\sum_{s\in\Gamma_{S,\delta}}\P\left(\sup_{0\le t\le T, u\in[0,\delta]}|X^N(t,s+u)-X^N(t,s)|>\epsilon/6\right)\\&\quad+3\sum_{t\in\Gamma_{T,\delta}}\P\left(\sup_{0\le s\le S, u\in[0,\delta]}|X^N(t+u,s)-X^N(t,s)|>\epsilon/6\right)\\
&\le 3\left(\frac{1}{\delta}+1\right)\sup_{0\le s\le S}\P\left(\sup_{0\le t\le T, u\in[0,\delta]}|X^N(t,s+u)-X^N(t,s)|>\epsilon/6\right)\\
&\quad+3\left(\frac{1}{\delta}+1\right)\sup_{0\le t\le T}\P\left(\sup_{0\le s\le T, u\in[0,\delta]}|X^N(t+u,s)-X^N(t,s)|>\epsilon/6\right). 
\end{align*}
It then follows from (ii) that, as $\delta\to0$,
\[ \limsup_N \P\left(\sup_{0\le t,t'\le T; 0\le s,s'\le S;|t-t'|\vee|s-s'|\le\delta}|X^N(t,s)-X^N(t',s')|>\epsilon\right)\to0.\]
This, combined with (i), implies the result. \hfill $\Box$

\section*{Acknowledgement}
We thank the reviewers on the helpful comments that have improved the exposition of our paper. Guodong Pang is partly supported by the NSF grant DMS-2216765. 

\bibliographystyle{abbrv}
\bibliography{Epidemic-Age-PDE}

\begin{thebibliography}{10}

\bibitem{aras2017heavy}
A.~K. Aras, Y.~Liu, and W.~Whitt.
\newblock Heavy-traffic limit for the initial content process.
\newblock {\em Stochastic Systems}, 7(1):95--142, 2017.

\bibitem{balan2019weak}
R.~M. Balan and B.~Saidani.
\newblock Weak convergence and tightness of probability measures in an abstract
  skorohod space.
\newblock {\em arXiv preprint arXiv:1907.10522}, 2019.

\bibitem{bickel1971convergence}
P.~J. Bickel and M.~J. Wichura.
\newblock Convergence criteria for multiparameter stochastic processes and some
  applications.
\newblock {\em The Annals of Mathematical Statistics}, 42(5):1656--1670, 1971.

\bibitem{billingsley1999convergence}
P.~Billingsley.
\newblock {\em Convergence of probability measures}.
\newblock John Wiley \& Sons, 1999.

\bibitem{britton2018stochastic}
T.~Britton and E.~Pardoux.
\newblock Stochastic epidemics in a homogeneous community.
\newblock {\em {Stochastic Epidemic Models with Inference} (T. Britton and E.
  Pardoux eds). Part I. Lecture Notes in Math. 2255}, pages 1--120, 2019.

\bibitem{YChen2014}
Y.~Chen, J.~Yang, and F.~Zhang.
\newblock The global stability of an {SIRS} model with infection age.
\newblock {\em Mathematical Biosciences and Engineering}, 11(3), 2014.

\bibitem{Cinlar2011probability}
E.~{\c{C}}inlar.
\newblock {\em Probability and Stochastics}.
\newblock Springer, 2011.

\bibitem{clemenccon2008stochastic}
S.~Cl{\'e}men{\c{c}}on, V.~Chi~Tran, and H.~De~Arazoza.
\newblock A stochastic {SIR} model with contact-tracing: large population
  limits and statistical inference.
\newblock {\em Journal of Biological Dynamics}, 2(4):392--414, 2008.

\bibitem{duchamps2021general}
J.-J. Duchamps, F.~Foutel-Rodier, and E.~Schertzer.
\newblock General epidemiological models: Law of large numbers and contact
  tracing.
\newblock {\em arXiv preprint arXiv:2106.13135}, 2021.

\bibitem{FPP2020b}
R.~Forien, G.~Pang, and {\'E}.~Pardoux.
\newblock Epidemic models with varying infectivity.
\newblock {\em SIAM Journal on Applied Mathematics}, 81(5):1893--1930, 2021.

\bibitem{FPP2020a}
R.~Forien, G.~Pang, and {\'E}.~Pardoux.
\newblock Estimating the state of the {C}ovid--19 epidemic in {F}rance using a
  model with memory.
\newblock {\em Royal Society Open Science}, 8:202327(3), 2021.

\bibitem{foutel2020individual}
F.~Foutel-Rodier, F.~Blanquart, P.~Courau, P.~Czuppon, J.-J. Duchamps,
  J.~Gamblin, {\'E}.~Kerdoncuff, R.~Kulathinal, L.~R{\'e}gnier, L.~Vuduc,
  et~al.
\newblock From individual-based epidemic models to {McKendrick-von Foerster
  PDEs}: A guide to modeling and inferring {COVID}-19 dynamics.
\newblock {\em Journal of Mathematical Biology}, 85(43), 2022.

\bibitem{Gaubert}
S.~Gaubert, M.~Akian, X.~Allamigeon, M.~Boyet, B.~Colin, T.~Grohens,
  L.~Massouli{\'e}, D.~P. Parsons, F.~Adnet, {\'E}.~Chanzy, et~al.
\newblock Understanding and monitoring the evolution of the {C}ovid-19 epidemic
  from medical emergency calls: the example of the paris area.
\newblock {\em Comptes Rendus. Math{\'e}matique}, 358(7):843--875, 2020.

\bibitem{hoppensteadt1974age}
F.~Hoppensteadt.
\newblock An age dependent epidemic model.
\newblock {\em Journal of the Franklin Institute}, 297(5):325--333, 1974.

\bibitem{inaba2001kermack}
H.~Inaba.
\newblock Kermack and {McK}endrick revisited: the variable susceptibility model
  for infectious diseases.
\newblock {\em Japan Journal of Industrial and Applied Mathematics},
  18(2):273--292, 2001.

\bibitem{inaba2004mathematical}
H.~Inaba and H.~Sekine.
\newblock A mathematical model for {C}hagas disease with
  infection-age-dependent infectivity.
\newblock {\em Mathematical biosciences}, 190(1):39--69, 2004.

\bibitem{Kaplan2020}
E.~H. Kaplan.
\newblock {OM} {F}orum-{COVID}-19 scratch models to support local decisions.
\newblock {\em Manufacturing \& Service Operations Management}, 22(4):645--655,
  2020.

\bibitem{KM27}
W.~O. Kermack and A.~G. McKendrick.
\newblock A contribution to the mathematical theory of epidemics.
\newblock {\em Proceedings of the Royal Society of London. Series A, Containing
  papers of a mathematical and physical character}, 115(772):700--721, 1927.

\bibitem{KM32}
W.~O. Kermack and A.~G. McKendrick.
\newblock Contributions to the mathematical theory of epidemics. {II}. {T}he
  problem of endemicity.
\newblock {\em Proceedings of the Royal Society of London. Series A, containing
  papers of a mathematical and physical character}, 138(834):55--83, 1932.

\bibitem{KM33}
W.~O. Kermack and A.~G. McKendrick.
\newblock Contributions to the mathematical theory of epidemics. {III.}
  {F}urther studies of the problem of endemicity.
\newblock {\em Proceedings of the Royal Society of London. Series A, Containing
  Papers of a Mathematical and Physical Character}, 141(843):94--122, 1933.

\bibitem{khoshnevisan2002multiparameter}
D.~Khoshnevisan.
\newblock {\em Multiparameter {P}rocesses: {A}n {I}ntroduction to {R}andom
  {F}ields}.
\newblock Springer Science \& Business Media, 2002.

\bibitem{magal2013two}
P.~Magal and C.~McCluskey.
\newblock Two-group infection age model including an application to nosocomial
  infection.
\newblock {\em SIAM Journal on Applied Mathematics}, 73(2):1058--1095, 2013.

\bibitem{martcheva2015introduction}
M.~Martcheva.
\newblock {\em An introduction to mathematical epidemiology}, volume~61.
\newblock Springer, 2015.

\bibitem{PP2020-FCLT-VI}
G.~Pang and E.~Pardoux.
\newblock Functional central limit theorems for epidemic models with varying
  infectivity.
\newblock {\em Stochastics}, 2022.
\newblock https://doi.org/10.1080/17442508.2022.2124870.

\bibitem{PP-2020}
G.~Pang and {\'E}.~Pardoux.
\newblock Functional limit theorems for non-{M}arkovian epidemic models.
\newblock {\em The Annals of Applied Probability}, 32(3):1615--1665, 2022.

\bibitem{pang2010two}
G.~Pang and W.~Whitt.
\newblock Two-parameter heavy-traffic limits for infinite-server queues.
\newblock {\em Queueing Systems}, 65(4):325--364, 2010.

\bibitem{pang2017two}
G.~Pang and Y.~Zhou.
\newblock Two-parameter process limits for an infinite-server queue with
  arrival dependent service times.
\newblock {\em Stochastic Processes and their Applications}, 127(5):1375--1416,
  2017.

\bibitem{thieme1993may}
H.~R. Thieme and C.~Castillo-Chavez.
\newblock How may infection-age-dependent infectivity affect the dynamics of
  {HIV/AIDS}?
\newblock {\em SIAM Journal on Applied Mathematics}, 53(5):1447--1479, 1993.

\bibitem{ZhangPeng2007}
Z.~Zhang and J.~Peng.
\newblock A {SIRS} epidemic model with infection--age dependence.
\newblock {\em Journal of Mathematical Analysis and Applications},
  331:1396--1414, 2007.

\bibitem{FodorKatzKovacs}
S.~D.~K. Zoltan~Fodor and T.~G. Kovacs.
\newblock Why integral equations should be used instead of differential
  equations to describe the dynamics of epidemics.
\newblock {\em arXiv:2004.07208}, Apr. 2020.

\end{thebibliography}

\end{document}